\theoremstyle{plain}
\newtheorem{theorem}{Theorem}%[section]
\newtheorem{definition}{Definition}[section]
\newtheorem{lemma}[definition]{Lemma}
\newtheorem{corollary}[definition]{Corollary}
\newtheorem{proposition}[definition]{Proposition}
\newtheorem*{theoremBHAispolycpx}{Theorem~\ref{thm:BHAispolycpx}}
\newtheorem*{theoremnminus1skel}{Theorem~\ref{thm:nminus1skel}}
\newtheorem*{theoremtransversalgeneric}{Theorem~\ref{thm:transversalgeneric}}
\newtheorem*{theoremJohnsonResult}{Theorem~\ref{thm:JohnsonResult}}
\newtheorem*{theoremoneboundedcomponent}{Theorem~\ref{thm:oneboundedcomponent}}
\newtheorem*{theoremParametrizedPiecewiseSmooth}{Theorem~\ref{thm:parametrizedPiecewiseSmooth}}
\theoremstyle{definition}
\theoremstyle{remark}
\newtheorem{remark}[definition]{Remark}
\newcommand{\Relu}{\textrm{ReLU}}
\newcommand{\CA}{\mathcal{C}(\mathcal{A})}
\newcommand{\Ci}{\mathcal{C}(F_i \circ \ldots \circ F_1)}
\newcommand{\Cii}{\mathcal{C}(F_{i-1} \circ \ldots \circ F_1)}
\newcommand{\Cm}{\mathcal{C}(F_{m} \circ \ldots \circ F_1)}
\newcommand{\Cmi}{\mathcal{C}(F_{m-1} \circ \ldots \circ F_1)}
\newcommand{\arch}{(n_0,\dots,n_m;1)}
\begin{document} 
\bibliographystyle{plain}
\title[Topological expressiveness of ReLU neural networks]{On transversality of bent hyperplane arrangements and the topological expressiveness of ReLU neural networks}
%\shorttitle{Topology and ReLU neural networks}
\author{J. Elisenda Grigsby}
\thanks{JEG was partially supported by Simons Collaboration grant 635578.}
\address{Boston College; Department of Mathematics; 522 Maloney Hall; Chestnut Hill, MA 02467}
\email{grigsbyj@bc.edu}

\author{Kathryn Lindsey}
\thanks{KL was partially supported by NSF grant number DMS-1901247.}
\address{Boston College; Department of Mathematics; 567 Maloney Hall; Chestnut Hill, MA 02467}
\email{lindseka@bc.edu}

\begin{abstract}
Let $F: \mathbb{R}^n \rightarrow \mathbb{R}$ be a feedforward ReLU neural network. It is well-known that for any choice of parameters, $F$ is continuous and piecewise affine-linear. We lay some foundations for a systematic investigation of how the {\em architecture} of $F$ impacts the geometry and topology of its possible decision regions, $F^{-1}((-\infty, t))$ and $F^{-1}((t, \infty))$, for binary classification tasks. Following the classical progression for smooth functions in differential topology, we first define the notion of a {\em generic, transversal} ReLU neural network and show that almost all ReLU networks are generic and transversal. We then define a partially-oriented linear $1$--complex in the domain of $F$ and identify properties of this complex that yield an obstruction to the existence of bounded connected components of a decision region. We use this obstruction to prove that a decision region of a generic, transversal ReLU network $F: \mathbb{R}^n \rightarrow \mathbb{R}$ with a single hidden layer of dimension $n+1$ can have no more than one bounded connected component.
\end{abstract}

\maketitle

\section{Introduction}
Neural networks have rapidly become one of the most widely-used tools in the machine learning toolkit. Unfortunately, despite--or, perhaps, because of--their spectacular success in applications, significant foundational questions remain. Of these, we believe many would benefit greatly from the direct attention of theoretical mathematicians, particularly those in the geometric topology, nonlinear algebra, and dynamics communities. An important goal of this paper and its sequels is to advertise some of these problems to those communities.

Recall that one can view a (trained) feedforward neural network as a particular type of function, $F: \mathbb{R}^n \rightarrow \mathbb{R}^m,$ between Euclidean spaces. The inputs to the function are data feature vectors and the outputs are often used to answer $m$--class classification problems by partitioning the input space into {\em decision regions} according to which component of the function output is maximized at that point. 

The main purpose of the present work is to present a framework for studying the question: {\em How does the architecture of a feedforward neural network constrain the topology of its decision regions?} Here, the {\em architecture} of a feedforward neural network refers simply to the dimensions of the hidden layers. The neural networks we consider here will be fully-connected ReLU networks without skip connections. The {\em topological expressiveness} of an architecture is the collection of possible homeomorphism types of decision regions that can appear as the parameters vary (cf. \cite{BianchiniScarselli, GussSalakhutdinov}).

First: why should the machine learning community care about {\em topological expressiveness}?

Recall that a cornerstone theoretical result in the study of neural networks--for a variety of activation functions, including the widely-used ReLU function that is our focus here--is the Universal Approximation Theorem (\cite{Cybenko, Hornik, AroraBasu}), which says that a sufficiently high-dimensional neural network can approximate any continuous function on a compact set to arbitrary accuracy. This is the version of  {\em representational power} or {\em expressiveness} frequently cited by practitioners as a guarantee that feedforward neural networks can yield a solution to any data question one might throw at them. 

Yet continuous functions can be quite poorly behaved, and  certain classes of poorly behaved continuous functions are undesirable targets for learning. For example, functions with high Lipschitz constants and ones whose partial derivatives are highly variable with respect to the input direction lead to the easy production of adversarial examples and hence to potentially poor generalization to unseen data (cf. \cite{Cisse, Tsuzuku, Madry}).  

It is important to remark at this point that homeomorphism is a very coarse equivalence relation. Two different decision regions can be homeomorphic and still have quite different geometric properties (shape, volume, etc.). However, homeomorphism is a good equivalence relation to consider on a first pass, because the most fundamental global features of the data are preserved by homeomorphism (number of connected components and higher homology groups, e.g.). The flip side of this observation is that if a particular architecture lacks the topological expressiveness to capture obvious features inherent in a well-sampled labeled data set, it is unlikely to generalize well to unseen data.

In a more practical direction, we believe topological complexity is the right lens through which to study the implicit bias of stochastic gradient descent, i.e., the behavior of neural network functions during training. One mystery is the unreasonable success of overparameterized networks--those for which the number of trainable parameters far exceeds the size of the data set--in learning functions with low training loss and good generalization to unseen data \cite{Belkin}. The classical bias-variance trade-off curve suggests that increasing the number of parameters should lead the model to learn high-complexity functions that overfit the training set; overparameterized networks defy this intuition. An examination of the relationship between the topological features of an architecture's loss landscape \cite{Nguyenetal} and of its realizable functions \cite{RolnickKording} may shed light on this phenomenon. We lay groundwork for this in \cite{GLMW}.

Last, we remark that ReLU and other piecewise-linear (PL) activation functions are now the default choice in the community for regression problems, which in turn underlie almost every other machine learning problem. Unfortunately, the resulting PL neural network functions are much trickier to study than smooth ones, because the classical foundational work in differential topology takes considerable effort to translate to the PL setting. ReLU, in particular, often gives rise to quite degenerate functions. In the sequel, we argue that this is a feature and not a bug \cite{GLM}. 

We focus here on the simplest case of feedforward neural networks, $F: \mathbb{R}^n \rightarrow \mathbb{R}$, with $1$--dimensional output. Such a network is often used to answer {\em binary} (aka Yes/No) classification problems by choosing a threshold, $t \in \mathbb{R}$, and declaring the sublevel set of $t$ to be the ``N"  decision region, the superlevel set of $t$ to be the ``Y" decision region, and the level set of $t$ to be the decision boundary. 
That is: 
\begin{align} \label{decisionregionsnotation}
N_F(t) &:= F^{-1}((-\infty, t)) \nonumber \\
B_F(t) &:= F^{-1}(\{t\})\\
Y_F(t) &:= F^{-1}((t, \infty)). \nonumber
\end{align}

Classical results in differential topology now tell us that if $F$ were smooth, we could perturb $F$ slightly to be Morse, and the indices and values of its critical points would then provide strong information about the topology of its decision regions. Although a ReLU neural network map $F$ is not smooth, $F$ will {\em typically}\footnote{We'd like to use the word {\em generically} here, but the term {\em generic} is unavoidably used in a different context later in the paper (Definition \ref{defn:genericlayer}).} (i.e., for almost all choices of parameters) be sufficiently well-behaved that the information needed to understand much of the topology of its decision regions is extractable directly from the neural network parameters. 

We begin by reviewing some standard results in the theory of affine hyperplane arrangements and convex polyhedra, relying heavily on Grunert's work \cite{Grunert} on polyhedral complexes 
and Hanin-Rolnick's work \cite{HaninRolnick} (see also \cite{Montufar, Serra, HaninRolnick2}) generalizing the classical notion of {\em hyperplane arrangements} to so-called {\em bent hyperplane arrangements} (Definition \ref{defn:benthyperarr}).  
Note that the appearance of polyhedral complexes in the study of ReLU networks is well-known, and made explicit, e.g., in the relationship between ReLU neural networks with rational parameters and tropical rational functions \cite{Zhang2018TropicalGO}. Our contribution here is to formalize the relationship and open a path for applying classical ideas in differential topology to study decision regions. We summarize our main results below. More precise versions and proofs of these theorems appear in later sections as indicated.

\begin{theorem} \label{thm:BHAispolycpx} Let $F: \mathbb{R}^n \rightarrow \mathbb{R}$ be a ReLU neural network map. $F$ is continuous and affine-linear on the cells of a canonical realization of the domain, $\mathbb{R}^{n}$, as a polyhedral complex, $\mathcal{C}(F)$. 
\end{theorem}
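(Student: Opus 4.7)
The plan is to proceed by induction on the depth $m$ of the network, writing $F = F_m \circ F_{m-1} \circ \cdots \circ F_1$ where each layer $F_i = \sigma_i \circ A_i$ consists of an affine map $A_i$ followed by coordinate-wise ReLU (with $\sigma_m$ possibly the identity on the output layer). The inductive hypothesis is that $\Cii$ is a polyhedral complex realizing $\mathbb{R}^n$ on each of whose cells the composition $F_{i-1} \circ \cdots \circ F_1$ is affine linear, and that this composition is globally continuous.

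For the base case, I would observe that the bending set of $F_1$ is exactly $A_1^{-1}\bigl(\bigcup_j \{y_j = 0\}\bigr)$, a finite union of affine hyperplanes in $\mathbb{R}^n$. This arrangement canonically determines a polyhedral complex whose top cells are the closures of its chambers and whose lower cells are their faces. On each chamber, the sign of every coordinate of $A_1$ is constant, so applying ReLU coordinate-wise zeros out a fixed subset of coordinates of $A_1$, yielding an affine map. For the inductive step, on each top cell $C$ of $\Cii$ the restriction $g_C := (F_{i-1} \circ \cdots \circ F_1)|_C$ extends to an affine map on the ambient space, so the preimage under $A_i \circ g_C$ of each coordinate hyperplane in the $i$th hidden space is either empty, all of $C$, or a codimension-one affine slice of $C$. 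Refining each top cell of $\Cii$ by these slices produces the candidate complex $\Ci$, and on each refined cell the signs of the arguments to the next ReLU are constant, so $F_i \circ \cdots \circ F_1$ remains affine linear on it.

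The main obstacle is verifying that the refinement really is a polyhedral complex in the sense of Grunert, i.e.\ that each cell is a convex polyhedron, that every face of a cell is a cell, and that the intersection of two cells is a common face. Convexity is immediate because the refined cells are intersections of convex polyhedra with affine halfspaces. The face-closure and face-intersection conditions are subtler because the layer-$i$ slicing loci are only piecewise affine globally: each such locus is the preimage under $F_{i-1} \circ \cdots \circ F_1$ of a genuine affine hyperplane in the hidden space, and its trace depends on the local affine extension $g_C$. The key observation is that global continuity of $F_{i-1} \circ \cdots \circ F_1$ forces $g_C$ and $g_{C'}$ to agree on $C \cap C'$ whenever $C, C'$ are adjacent top cells of $\Cii$; hence the traces of the layer-$i$ slicing hyperplanes on $C$ and on $C'$ restrict to the same affine subset of $C \cap C'$, and the subdivisions of adjacent cells glue along their common face.

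Once the refinement is known to be a polyhedral complex, continuity of $F$ is automatic from continuity of affine maps and of ReLU under composition, and canonicity of $\Ci$ is built into the construction: its top cells are precisely the maximal connected subsets of $\mathbb{R}^n$ on which $F_i \circ \cdots \circ F_1$ is affine, together with their faces, and this data is determined by $F$ alone. Setting $i = m$ produces $\mathcal{C}(F)$ with the required properties.
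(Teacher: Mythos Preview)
Your inductive skeleton matches the paper's exactly: build $\Ci$ from $\Cii$ by refining along the preimages of the coordinate hyperplanes in the $i$th hidden layer, and check that the composite stays affine on each refined cell. The substantive difference is in how the ``refinement is a polyhedral complex'' step is handled. The paper packages this as the \emph{level set complex} $\Cii_{\in R^{(i)}}$ and cites Grunert's lemma (Lemma~\ref{lem:GrunertLevelSetComplex}) as a black box guaranteeing that the result is a polyhedral complex. You instead argue it directly via the observation that continuity forces the local affine extensions $g_C$ and $g_{C'}$ to agree on $C\cap C'$, so the slicing loci glue across faces. This is correct and is essentially a sketch of Grunert's lemma in the special case at hand; the paper's route is cleaner bookkeeping, yours is more self-contained.

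One genuine inaccuracy in your final paragraph: you assert that the top cells of $\Ci$ are ``precisely the maximal connected subsets of $\mathbb{R}^n$ on which $F_i\circ\cdots\circ F_1$ is affine,'' and that this makes the complex determined by $F$ alone. Neither claim is part of Theorem~\ref{thm:BHAispolycpx}, and neither is true without further hypotheses. The canonical complex is defined by the inductive level-set construction and can strictly over-refine the maximal affine regions (e.g.\ when a layer-$i$ slicing hyperplane lies entirely inside a facet of $\Cii$, or when distinct activation patterns yield the same affine map). The identification of $n$-cells with closures of activation regions is exactly the content of Theorem~\ref{thm:nminus1skel} and requires the network to be generic and transversal. ``Canonical'' here means canonical for the sequence of layer maps, not intrinsic to $F$ as a function $\mathbb{R}^n\to\mathbb{R}$. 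Drop that sentence and your argument is sound.
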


Moreover, when $F$ is  {\em transversal} (Definition \ref{defn:GenericBHA}),  we explicitly identify cells of this polyhedral complex with natural objects defined by Hanin-Rolnick:

\begin{theorem} \label{thm:nminus1skel} Let $F: \mathbb{R}^n \rightarrow \mathbb{R}$ be a transversal ReLU neural network map. The $n$--cells of the canonical polyhedral complex, $\mathcal{C}(F)$, are the closures of the {\em activation regions} (Definition \ref{defn:activationregion}) of $F$, and the $(n-1)$--skeleton of $\mathcal{C}(F)$ is the {\em bent hyperplane arrangement} (Definition \ref{defn:benthyperarr}) of $F$.
\end{theorem}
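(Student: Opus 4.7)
The proof strategy I would take is inductive on the number of layers of $F$. Write $F = F_m \circ \cdots \circ F_1$, where each $F_i$ consists of an affine map followed by a coordinate-wise ReLU. I would verify the claim for a single hidden layer and then inductively describe how composing with one additional layer refines the polyhedral complex $\mathcal{C}(F_i \circ \cdots \circ F_1)$. Throughout, Theorem~\ref{thm:BHAispolycpx} guarantees we are working with a genuine polyhedral complex on whose cells $F$ is affine linear; the job here is to identify those cells concretely.

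For the base case $m=1$, the map $F_1: \mathbb{R}^n \to \mathbb{R}^{n_1}$ has the form $x \mapsto \mathrm{ReLU}(Ax + b)$, and the canonical polyhedral complex is exactly the one induced by the affine hyperplane arrangement $\{H_\ell = \{a_\ell \cdot x + b_\ell = 0\}\}_{\ell=1}^{n_1}$. The top-dimensional chambers are precisely the maximal open regions on which each $a_\ell \cdot x + b_\ell$ has a fixed sign, i.e., the activation regions (Definition~\ref{defn:activationregion}); their closures are the $n$-cells. The $(n-1)$-skeleton is $\bigcup_\ell H_\ell$, which at this first stage is literally the bent hyperplane arrangement (Definition~\ref{defn:benthyperarr}). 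Genericity (Definition~\ref{defn:genericlayer}) at layer $1$ guarantees the $H_\ell$ are distinct and in general position, so every realized sign pattern determines a full-dimensional chamber rather than a lower-dimensional degeneration.

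For the inductive step, suppose the theorem holds for $G := F_{i-1} \circ \cdots \circ F_1$ with complex $\mathcal{C}(G)$. On each top cell $\sigma$ of $\mathcal{C}(G)$, the map $G$ is affine, so the preimage under $G$ of the zero set of the $\ell$-th preactivation of $F_i$ is either empty, all of $\sigma$, or the intersection of $\sigma$ with a single affine hyperplane $H_{i,\ell}^\sigma$. The collection $\{H_{i,\ell}^\sigma\}$ refines $\sigma$ into subcells; globally, the union of these refinements across all top cells of $\mathcal{C}(G)$ is precisely the bent hyperplane in $\mathbb{R}^n$ associated to the $\ell$-th neuron of layer $i$. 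Transversality (Definition~\ref{defn:GenericBHA}) guarantees that each such bent hyperplane meets the existing $(n-1)$-skeleton in the codimension predicted by general position, so no new hyperplane collapses onto a cell, and the refinement yields exactly the canonical polyhedral complex $\mathcal{C}(F_i \circ \cdots \circ F_1)$. The top cells of this refinement are cut out by fixing the prior activation pattern together with a sign for each new preactivation, which is precisely an activation pattern through layer $i$; the $(n-1)$-skeleton is the union of the old $(n-1)$-skeleton with the new bent hyperplanes, matching the bent hyperplane arrangement through layer $i$.

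The main obstacle will be a careful handling of the transversality condition in the inductive step. One must rule out pathologies in which a new bent hyperplane coincides with, contains, or is tangent to lower-dimensional faces of $\mathcal{C}(G)$, and one must verify that distinct activation patterns through layer $i$ actually give distinct top-dimensional cells rather than collapsing to the same chamber. I expect Definition~\ref{defn:GenericBHA} is formulated precisely so that the local intersection of each new hyperplane with every face of every $\sigma$ has the expected codimension; once that is in hand, the remainder of the argument is essentially a combinatorial bookkeeping exercise matching cells of the refined complex with activation patterns, and matching the new $(n-1)$-skeleton with the inductively extended bent hyperplane arrangement.
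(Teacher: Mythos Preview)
Your proposal is correct and follows essentially the same inductive strategy as the paper: induct on the number of layers, with the base case handled by the fact that $\mathcal{C}(F_1)$ is literally the polyhedral complex of the first-layer hyperplane arrangement, and the inductive step verified by showing that transversality forces each new bent hyperplane $F_{m,j}^{-1}(\{0\})$ to intersect every cell of $\Cmi$ in codimension one (the paper invokes the Map Transversality Theorem here, which is precisely your ``expected codimension'' claim). The paper is slightly more explicit than your sketch about one point you leave implicit: it separately checks the reverse containment, i.e., that every $(n_0-1)$--cell of $\Cm$ actually lies in $\mathcal{B}_F$, by arguing that any such cell either already sat in the old $(n_0-1)$--skeleton or arose as $C' \cap F_{m,j}^{-1}(\{0\})$ for some top cell $C'$ of $\Cmi$; you should make this direction explicit when you write the proof out, but your framework already accommodates it.
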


We slightly extend classical transversality results (Theorems \ref{thm:Transmaps} and \ref{thm:ParTrans}) and incorporate the notion of a {\em generic} (Definition \ref{defn:genericlayer}) neural network  to obtain:

\begin{theorem} \label{thm:transversalgeneric} Almost all ReLU neural networks are generic and transversal.
\end{theorem}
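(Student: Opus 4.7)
The plan is to proceed inductively on the depth of the network, combining a Zariski-type argument for genericity with a parametric transversality argument for transversality. The parameter space of a fully-connected ReLU network with a fixed architecture is a Euclidean space (a product of spaces of weight matrices and bias vectors), so ``almost all'' should be interpreted in the sense of Lebesgue measure on this parameter space.

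First I would show that genericity (Definition \ref{defn:genericlayer}) is a full-measure condition. Since the genericity of a single layer should be characterized by the non-vanishing of finitely many polynomial expressions in its weights and biases (for example, conditions stating that the affine hyperplanes defined by the neurons of the layer are in general position), the non-generic parameters for each layer form a proper real algebraic subvariety, hence have Lebesgue measure zero. Intersecting the generic loci over the finitely many layers yields a full-measure set of networks all of whose layers are generic.

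Next, to upgrade genericity to transversality, I would argue inductively using Theorem \ref{thm:nminus1skel} together with the parametric transversality theorem (Theorem \ref{thm:ParTrans}). Fix parameters of layers $1$ through $i-1$ lying in the full-measure set where genericity and transversality hold through depth $i-1$. By Theorem \ref{thm:nminus1skel}, the bent hyperplane arrangement through depth $i-1$ is the $(n-1)$-skeleton of a finite polyhedral complex $\Cii$, on each cell of which $F_{i-1}\circ\cdots\circ F_1$ restricts to an affine linear map. The $i$-th layer contributes finitely many affine hyperplanes in its domain, and transversality of their pullbacks to the open cells of $\Cii$ is exactly what parametric transversality governs: for each cell $C$, the parametric family of pullback maps (indexed by the weights and biases of layer $i$) depends submersively on these parameters, so Theorem \ref{thm:ParTrans} gives that the set of parameters for which transversality to $C$ fails has measure zero. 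Since $\Cii$ has only finitely many cells, the union of these bad sets is still measure zero, and the remaining parameters extend the transversal/generic property through depth $i$.

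The main obstacle will be the stratified, piecewise linear nature of the bent hyperplane arrangement: the classical parametric transversality theorem is stated for smooth maps between smooth manifolds, while here the pullbacks must be analyzed cell-by-cell across a polyhedral complex and then aggregated. The ``slight extension'' alluded to in the introduction presumably amounts to formalizing this stratified version—verifying that the parameter dependence is submersive on each open cell of $\Cii$, so that parametric transversality applies on each stratum, and then checking that transversality to each stratum of $\Cii$ together implies the full notion of transversality required by Definition \ref{defn:GenericBHA}. Once these technical checks are in place, the inductive step closes and iterating from $i=1$ through $i=m$ yields the theorem.
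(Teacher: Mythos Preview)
Your proposal is correct and follows essentially the same approach as the paper: genericity is disposed of via the proper-algebraic-subvariety/measure-zero argument (this is Lemma~\ref{lem:genHAdense}), and transversality is established by induction on depth, fixing the first $i-1$ layers in the full-measure good set and applying parametric transversality cell-by-cell over the canonical polyhedral complex $\Cii$ to the parameters of layer $i$. The paper's version supplies two details you left implicit: the parametrized node map is transverse to $\{0\}$ on each cell because varying the bias alone already makes it surjective onto $\mathbb{R}$, and the aggregation over choices of the fixed early-layer parameter $s$ is handled by Tonelli's theorem; also, you need only Theorem~\ref{thm:BHAispolycpx} (existence of $\Cii$ and linearity on cells), not Theorem~\ref{thm:nminus1skel}.
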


Letting $D$ denote the dimension of the parameter space of a neural network, we show that a parametrized family of neural networks (Definition \ref{ParameterizedNN}) $\mathcal{F}: \mathbb{R}^{n_0} \times \mathbb{R}^{D} \rightarrow \mathbb{R}$ is \emph{piecewise smooth}  in the following sense: 

\begin{theorem} \label{thm:parametrizedPiecewiseSmooth}
Every parametrized family of ReLU neural networks  $\mathcal{F}$ is smooth on the complement of a codimension 1 algebraic set.  
\end{theorem}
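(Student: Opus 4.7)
The plan is to show that the locus of non-smoothness of $\mathcal{F}$ is contained in a codimension $1$ real algebraic subset of $\mathbb{R}^{n_0} \times \mathbb{R}^D$, cut out as a finite union of zero sets of non-trivial polynomials. Since the only source of non-smoothness in $\mathcal{F}$ is the ReLU nonlinearity $\sigma$, which fails to be smooth precisely when its argument vanishes, it suffices to exhibit a codimension $1$ algebraic set $Z$ that contains the zero locus of every pre-activation $z^{(i)}_j$ of $\mathcal{F}$, regarded as a function of the combined input-parameter pair $(x, \theta)$.

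To construct $Z$, I would proceed by induction on the layer index $i$. For each hidden neuron $(i, j)$ and each possible activation pattern $\alpha$ on the neurons in layers $1, \ldots, i-1$, I would define a polynomial $p^{\alpha}_{i,j}(x, \theta)$ so that the pre-activation $z^{(i)}_j(x, \theta)$ agrees with $p^{\alpha}_{i,j}(x, \theta)$ on the (possibly empty) region $R_{\alpha}$ where the pattern of the first $i-1$ layers equals $\alpha$. The base case is $p^{\emptyset}_{1, j}(x,\theta) = \sum_l W^{(1)}_{jl}\, x_l + b^{(1)}_j$, and the recursive step substitutes either $p^{\alpha}_{i-1, k}$ or $0$ into the affine combination $z^{(i)}_j = \sum_k W^{(i)}_{jk}\, a^{(i-1)}_k + b^{(i)}_j$ according to the $k$-th sign prescribed by $\alpha$. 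Because $b^{(i)}_j$ appears only in this final summand, $\partial p^{\alpha}_{i,j} / \partial b^{(i)}_j \equiv 1$, so $p^{\alpha}_{i,j}$ is never the zero polynomial.

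Finally, taking $Z := \bigcup_{i,\,j,\,\alpha} \{p^{\alpha}_{i,j} = 0\}$ gives a finite union of non-trivial real algebraic hypersurfaces in $\mathbb{R}^{n_0} \times \mathbb{R}^D$, hence a real algebraic set of codimension $1$. For any $(x_0, \theta_0) \notin Z$, the pre-activations $z^{(i)}_j(x_0, \theta_0)$ --- which coincide with $p^{\alpha^\ast}_{i,j}(x_0, \theta_0)$ for the pattern $\alpha^\ast$ realized at $(x_0, \theta_0)$ --- are all nonzero, so by continuity of the $z^{(i)}_j$ the pattern $\alpha^\ast$ is locally constant and $\mathcal{F}$ coincides with a single polynomial formula on a neighborhood of $(x_0, \theta_0)$, hence is smooth there. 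The main obstacle will be the recursive bookkeeping of the polynomial representatives $p^{\alpha}_{i,j}$; the key technical point is that each is non-trivial (via the bias coefficient), which is precisely what ensures $Z$ has codimension $1$ rather than being all of $\mathbb{R}^{n_0+D}$.
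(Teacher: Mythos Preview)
Your proposal is correct and follows essentially the same route as the paper: both construct the finite family of polynomials obtained by taking, for each hidden neuron, all possible pre-activation formulas that arise as one replaces each earlier ReLU by either the identity or the zero map, and both argue that off the union of their zero sets the activation pattern is locally constant so $\mathcal{F}$ agrees locally with a polynomial. Your version is actually slightly more careful than the paper's on one point: you explicitly observe that $\partial p^{\alpha}_{i,j}/\partial b^{(i)}_j \equiv 1$, which guarantees each polynomial is non-trivial and hence that $Z$ genuinely has codimension~$1$; the paper asserts this codimension without justification.
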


Many of the key observations in Theorems \ref{thm:BHAispolycpx}--\ref{thm:parametrizedPiecewiseSmooth} were proved in \cite{HaninRolnick}.  The theorems above place those results in a broader context. Once we've established 
these foundational results, 
 we turn our attention to addressing some first questions about architecture's impact on topological expressiveness. We begin by using the framework developed above to recast and reprove the result of Beise-Cruz-Schr{\"o}der \cite{DDCruz}, proved independently by Johnson \cite{Johnson} and Hanin-Sellke \cite{HaninSellke}, that inspired this study:\footnote{The arguments of Beise-Cruz-Schr{\"o}der and Johnson hold for a large class of activation functions, including the ReLU activation function studied here. Hanin-Sellke prove the statement only for ReLU, but their paper has a broader scope.}

\begin{theorem} \label{thm:JohnsonResult} For any integer $n \geq 2$,  let $F: \mathbb{R}^n \rightarrow \mathbb{R}$ be a ReLU neural network, all of whose hidden layers have dimension $\leq n$.  Then for any decision threshold $t \in \mathbb{R}$, each of $Y_F(t)$ $B_F(t)$, and $N_F(t)$ is either empty or unbounded. 
\end{theorem}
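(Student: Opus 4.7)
The plan is to establish the following stronger statement by induction on the number $m$ of hidden layers, from which the theorem follows immediately: for every $x_0\in\mathbb{R}^n$ there exists a continuous unbounded path $\gamma\colon[0,\infty)\to\mathbb{R}^n$ with $\gamma(0)=x_0$ along which $F$ is identically equal to $F(x_0)$. Once this is known, picking $x_0$ in any of $Y_F(t)$, $B_F(t)$, or $N_F(t)$ yields an unbounded subset of that region lying in $F^{-1}(F(x_0))$.

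The base case $m=0$ is immediate: $F(x)=a\cdot x+b$ is affine, and since $n\geq 2$ the kernel of $a$ is nontrivial, so any ray from $x_0$ along $\ker a$ suffices. For the inductive step I would factor $F=G\circ F_1$ with $F_1=\sigma\circ A_1\colon\mathbb{R}^n\to\mathbb{R}^{n_1}$ the first hidden layer and $G\colon\mathbb{R}^{n_1}\to\mathbb{R}$ a ReLU network with $m-1$ hidden layers of width $\leq n$. If $A_1$ is not injective (in particular whenever $n_1<n$), then a ray from $x_0$ along $\ker A_1$ keeps $F_1$, and hence $F$, constant, and the inductive hypothesis is not needed. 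Otherwise $n_1=n$ and $A_1$ is an affine bijection; write $q_0:=A_1(x_0)$. If some coordinate $(q_0)_j\leq 0$, take $\gamma(s):=x_0+s\,A_1^{-1}(-e_j)$: the $j$-th pre-activation coordinate stays nonpositive and is clamped to $0$ by $\sigma$, while the other coordinates are unchanged, so $F_1\circ\gamma\equiv\sigma(q_0)$ and $F\circ\gamma$ is constant.

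The crux is the remaining case $q_0\in\mathbb{R}^n_{>0}$. Here $G\colon\mathbb{R}^n\to\mathbb{R}$ has $m-1$ hidden layers of width $\leq n$, so the inductive hypothesis applied to $G$ at $q_0$ yields an unbounded continuous path $\gamma_G\colon[0,\infty)\to\mathbb{R}^n$ with $\gamma_G(0)=q_0$ and $G\circ\gamma_G\equiv G(q_0)=F(x_0)$. Let $s^*:=\inf\{s>0:\gamma_G(s)\notin\mathbb{R}^n_{>0}\}\in(0,\infty]$; it is positive because $q_0$ lies in the open orthant. If $s^*=\infty$, set $\gamma:=A_1^{-1}\circ\gamma_G$, which is unbounded since $A_1^{-1}$ is an affine bijection, and on which $F$ is constant because $\sigma$ restricts to the identity on $\mathbb{R}^n_{\geq 0}$. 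If $s^*<\infty$, continuity forces $\gamma_G(s^*)\in\partial\mathbb{R}^n_{\geq 0}$, so some coordinate $(\gamma_G(s^*))_j=0$; define $\gamma(s):=A_1^{-1}(\gamma_G(s))$ for $s\in[0,s^*]$ and $\gamma(s):=\gamma(s^*)+(s-s^*)A_1^{-1}(-e_j)$ for $s\geq s^*$. Past $s^*$ the $j$-th coordinate of $A_1\gamma(s)$ becomes nonpositive while the other coordinates remain equal to those of $\gamma_G(s^*)$, so $\sigma(A_1\gamma(s))=\gamma_G(s^*)$ and $F(\gamma(s))=G(\gamma_G(s^*))=F(x_0)$.

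The hard part will be this final lifting step: checking that the spliced $\gamma$ is continuous at $s^*$, that it is genuinely unbounded, and that ReLU correctly ``freezes'' $F$ past the moment the upstairs curve touches $\partial\mathbb{R}^n_{\geq 0}$. The key structural input enabling the whole argument is that $\sigma^{-1}$ of any point in $\partial\mathbb{R}^n_{\geq 0}$ is an unbounded translated orthant in $\mathbb{R}^n$, which gives the downstairs path room to escape to infinity exactly when the upstairs path reaches the orthant boundary; this is what makes the width hypothesis $n_i\leq n$ essential, since it forces either $\ker A_i\neq\{0\}$ or the bijective-plus-boundary behavior exploited above.
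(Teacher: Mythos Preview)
Your argument is correct and complete; the splicing step you flag as ``the hard part'' is in fact routine once you have the right setup, and you have already supplied all the necessary checks (continuity at $s^*$ follows since both pieces evaluate to $A_1^{-1}(\gamma_G(s^*))$ there; unboundedness follows since $A_1^{-1}(-e_j)\neq 0$; and the ReLU freezing is just the coordinatewise verification you wrote out).

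However, your route is genuinely different from the paper's.  You prove the strictly stronger statement that through every point of $\mathbb{R}^n$ there is an unbounded path on which $F$ is constant, by peeling off one layer at a time and exploiting the fact that $\sigma^{-1}(p)$ is an unbounded orthant whenever $p\in\partial\mathbb{R}^n_{\geq 0}$.  This is essentially the Hanin--Sellke style of argument: elementary, constructive, and uniform over all parameter choices, with no case split between generic and non-generic networks or between transversal and non-transversal thresholds.  The paper instead analyzes the image $\mathrm{Im}(N)\subset\mathbb{R}^n$ of the composite of the hidden-layer maps as a polyhedral complex with at most one top-dimensional cell (Proposition~\ref{prop:ImN}), shows that preimages of its $(n-1)$--skeleton are unbounded (Lemma~\ref{lem:BdryPreImUnbound}), and then argues that the hyperplane $G^{-1}(t)$ must meet that skeleton (Lemma~\ref{lem:HypIntBdry}); this first establishes the result for $B_F(t)$ and then transfers it to $Y_F(t)$ and $N_F(t)$ via a separate transversal/non-transversal threshold argument.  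Your proof is shorter and requires none of the paper's polyhedral machinery; the paper's proof, on the other hand, illustrates the structural framework (canonical polyhedral complex, transversal thresholds) that the rest of the paper builds toward Theorem~\ref{thm:oneboundedcomponent}.
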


We also have the following new application. See \cite{Nguyenetal} for related results:

\begin{theorem} \label{thm:oneboundedcomponent} Let $F: \mathbb{R}^n \rightarrow \mathbb{R}^{n+1} \rightarrow \mathbb{R}$ be a ReLU neural network with input dimension $n$ and a single hidden layer of dimension $n+1$. Each decision region of $F$ associated to a transversal threshold can have no more than one bounded connected component.
\end{theorem}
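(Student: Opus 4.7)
The plan is to carry out a piecewise-linear Morse-theoretic argument on the 1-skeleton of $\mathcal{C}(F)$, exploiting that a single hidden layer of dimension $n+1$ produces only $n+1$ candidate critical points. By Theorems~\ref{thm:BHAispolycpx} and~\ref{thm:nminus1skel}, the generic/transversal hypotheses realise the activation regions as the cells of an arrangement of $n+1$ affine hyperplanes $H_1,\ldots,H_{n+1}$ in general position in $\mathbb{R}^n$. Such an arrangement has exactly $n+1$ vertices $v_i := \bigcap_{j\ne i}H_j$ and a unique bounded $n$-cell $\Delta$, the simplex they span; the bounded edges of the 1-skeleton of $\Delta$ form a copy of the complete graph $K_{n+1}$.

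Writing $F(x) = \sum_k w_k\,\sigma(L_k(x)) + c$ with $L_k(x) = a_k\cdot x + b_k$, and setting $\alpha_i := L_i(v_i)$, I would first show that on each bounded edge $e_{ij}$ only neurons $i$ and $j$ can be active, since $L_k\equiv 0$ on the line through $v_i$ and $v_j$ whenever $k\notin\{i,j\}$. A four-case sign analysis in $(\alpha_i,\alpha_j)$ then shows that $F|_{e_{ij}}$ is affine, and under the transversality assumption is constant precisely when $\alpha_i<0$ and $\alpha_j<0$, in which case $F\equiv c = F(v_i) = F(v_j)$. I would then orient each edge of $K_{n+1}$ in the direction of strictly increasing $F$, leaving it unoriented when $F|_{e_{ij}}$ is constant; this realises the relevant restriction of the partially oriented linear 1-complex of the paper.

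Next, a standard PL Morse argument shows that every bounded connected component $U$ of $Y_F(t) = F^{-1}((t,\infty))$ contains a vertex $v\in\{v_1,\ldots,v_{n+1}\}$ at which $F|_{\bar U}$ attains its maximum: compactness supplies a maximiser; since $F\equiv t$ on $\partial U$ it must lie in $U$; and affinity of $F$ on each cell of $\mathcal{C}(F)$, together with boundedness of $U$ (to exclude constant pieces on unbounded cells), forces it to a vertex. Such a $v$ is automatically a local maximum of $F$ on $\mathbb{R}^n$, and since $F$ is affine on each $e_{ij}$ it is a \emph{weak sink} of the oriented $K_{n+1}$: every bounded edge incident to $v$ is either oriented toward $v$ or unoriented. (At least one weak sink always exists, namely the vertex at which $F|_\Delta$ attains its maximum.)

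The key structural observation, which I expect to be the main obstacle, is this: if two distinct vertices $v_i,v_j$ are both weak sinks, then the edge $e_{ij}$ can be oriented toward neither and must be unoriented, forcing $\alpha_i,\alpha_j<0$ and $F(v_i) = F(v_j) = c$. Since $F|_\Delta$ is affine, the sub-simplex of $\Delta$ spanned by all weak sinks then carries the constant value $c$, so any two weak sinks are joined in $\mathcal{C}(F)$ by a path on which $F\equiv c$. For a transversal $t$: if the weak sink is unique there is at most one local max vertex and hence at most one bounded component; if there are several, they all sit at value $c$, so $t>c$ rules out a bounded component altogether (no local max vertex would lie in $Y_F(t)$), while $t<c$ places every bounded component's local max vertex in a single connected subset of $Y_F(t)$, namely the spanning sub-simplex on which $F\equiv c > t$, forcing all bounded components to coincide. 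Applying the same argument to $-F$, which interchanges sinks and sources, yields the corresponding bound for $N_F(t)$.
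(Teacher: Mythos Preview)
Your approach is essentially the paper's: both arguments hinge on the elementary fact that in an arrangement of $n+1$ hyperplanes in $\mathbb{R}^n$, any two vertices of $\mathcal{C}(\mathcal{A})$ are joined by a \emph{single} $1$--cell on which $F$ is therefore affine (the paper isolates this as Corollary~\ref{cor:vertexadjnplus1}). Your PL--Morse step---that a bounded component of $Y_F(t)$ must contain a vertex of $\mathcal{C}(F)$ at which $F|_{\bar U}$ is maximised---is the content of the paper's Proposition~\ref{prop:subgraph} specialised to this architecture, and the endgame is the same contradiction: an affine edge joining maxima in two distinct bounded components cannot be oriented consistently. The paper's version is slightly leaner in that it does not single out local-max vertices; it takes \emph{any} vertex $p_i$ in each $S_i$ supplied by Proposition~\ref{prop:subgraph} and observes that the connecting edge would have to point toward both.

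Two issues, neither fatal. First, you assume the network is generic and transversal, so that the $n+1$ hyperplanes are in general position and span a simplex $\Delta$. The theorem makes no such hypothesis on $F$---only the \emph{threshold} must be transversal---and the paper's Corollary~\ref{cor:vertexadjnplus1} is stated for arbitrary (generic or not) arrangements precisely to cover this. Your write-up needs either a reduction to the generic case or the observation that when the arrangement has at most one vertex the conclusion is immediate. Second, your claim that $F|_{e_{ij}}$ is constant \emph{precisely} when $\alpha_i<0$ and $\alpha_j<0$ is false: with $\alpha_i,\alpha_j>0$ the restriction is $w_iL_i+w_jL_j+c$, which is constant whenever $w_i\alpha_i=w_j\alpha_j$, and nothing in the hypotheses rules this out. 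Fortunately you never need the equivalence, the value $c$, or the sub-simplex detour: once $v_i,v_j$ are both local maxima, affinity on $e_{ij}$ alone gives $F\equiv F(v_i)=F(v_j)>t$ there, so $e_{ij}\subset Y_F(t)$ already connects them and you are done.
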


A crucial player in the proof of Theorem \ref{thm:oneboundedcomponent} is the $1$--skeleton, $\mathcal{C}(F)_1$, of the polyhedral complex, $\mathcal{C}(F)$, which is naturally endowed with a partial orientation pointing in the direction in which $F$ increases (Definition \ref{defn:oriented1skel}). 

Note that the partial orientation data can be extracted directly from the weight matrices of the neural network using the chain rule (Lemma \ref{lem:compgradFregion}). 

This paper is heavy on definitions and notation, since we pulled from a variety of sources to lay necessary foundations for a consistent and general theory. Some sections may therefore be safely skimmed on a first reading and referenced only as needed to understand the proofs of the main results. Sections \ref{ss:hyperplanearrangements} and \ref{sec:Binary} largely fall into this category. Similarly, in Section \ref{sec:PWSmooth} we establish important results about parameterized neural network maps and prove Theorem \ref{thm:parametrizedPiecewiseSmooth}, but nothing in this section is referenced elsewhere in the paper.

Sections \ref{sec:PolyCpx} and \ref{sec:Trans} establish notation and key terminology.  We do the bulk of the technical work in Section \ref{sec:PolyMapsTransThresh}, where we establish necessary transversality results; Section \ref{sec:BHA}, where we prove Theorem \ref{thm:BHAispolycpx}; and Section \ref{sec:TransNN}, where we prove Theorems \ref{thm:nminus1skel} and \ref{thm:transversalgeneric}. The new applications can be found in Section \ref{sec:TopApp}, where we reprove Theorem \ref{thm:JohnsonResult} and prove Theorem \ref{thm:oneboundedcomponent}.
 
\subsection*{Acknowledgements}
The authors would like to thank Jesse Johnson for proposing many of the questions at the heart of this investigation; Boris Hanin for illuminating conversations during the {\em Foundations of Deep Learning} program at the Simons Institute for the Theory of Computing in the summer of 2019 and numerous follow-on discussions that also included David Rolnick and Atlas Wang; Jenna Rajchgot for very helpful discussions about algebraic geometry; Yaim Cooper and Jordan Ellenberg for inspiration and encouragement. We would also like to thank the anonymous reviewers for pointing out gaps (now fixed) in some arguments and for a number of excellent suggestions that greatly improved the exposition.

\section{Layer maps and hyperplane arrangements} \label{ss:hyperplanearrangements}

In what follows, let 
\begin{itemize}
	\item $\Relu:\mathbb{R} \to \mathbb{R}$ denote the function $\Relu(x) := \max\{0,x\}$, and
	\item $\sigma:\mathbb{R}^n \to \mathbb{R}^n$ denote the function that applies $\Relu$ to each coordinate.
\end{itemize}
 
  \begin{definition}\label{d:neuralnetwork} Let $n_0 \in \mathbb{N}$.  A \emph{neural network defined on $\mathbb{R}^{n_0}$ with ReLU activation function on all hidden layers and one-dimensional output} is a finite sequence of natural numbers $n_1,\dots,n_m$ together with affine maps $A_i:\mathbb{R}^{n_{i-1}} \to \mathbb{R}^{n_{i}}$ for $i = 1,\dots,m+1$, where $n_{m+1}=1$.   This determines a function $F:\mathbb{R}^{n_0} \rightarrow \mathbb{R}$,  which we call the associated \emph{neural network map}, given by the composition
  \[\mathbb{R}^{n_0}  \xrightarrow{F_1 = \sigma \circ A_1} \mathbb{R}^{n_1}  \xrightarrow{F_2 = \sigma \circ A_2} \mathbb{R}^{n_2}  \xrightarrow{F_3 = \sigma \circ A_3}\dots \xrightarrow{F_m = \sigma \circ A_{m}} \mathbb{R}^{n_m}  \xrightarrow{G = A_{m+1}} \mathbb{R}^1.\]
   Such a neural network is said to  be of
 \emph{architecture} $\arch$, 
 \emph{depth}  $m+1$, and \emph{width} 
 $\max \{n_1,\ldots,n_m,1\}$. 
  The $k^{\textrm{th}}$ \emph{layer map} of such a neural network is the composition $F_k = \sigma \circ A_k$ for $k = 1,\dots,m$ and is the map $G = A_k$ for $k=m+1$. 
\end{definition}

\begin{remark}
Note that in Definition \ref{d:neuralnetwork} the activation function on the final layer map is the Identity function, not $\sigma$. Accordingly, we use the notation $G$ on the output layer map to distinguish it from the hidden layer maps, $F_k$.
\end{remark}

An affine map $A:\mathbb{R}^n \to \mathbb{R}^m$ is specified by a weight matrix $W \in M_{m \times n}(\mathbb{R})$ and a bias vector $\vec{b} \in \mathbb{R}^m$, as follows. Let $(W|\vec{b})$ denote the $m \times (n+1)$ matrix whose final column is $\vec{b}$. For each $\vec{x} \in \mathbb{R}^n$, let $\vec{x}':= (\vec{x},1)$ be the element of $\mathbb{R}^{n+1}$ whose first $n$ coordinates are those of $x$ and whose last coordinate is $1$.  Thus $A(\vec{x})= (W|\vec{b})\vec{x}'$.

For a network architecture  $\arch$, we will denote by \[D := \sum_{i=0}^m (n_i+1)n_{i+1}\] the \emph{total dimension} of the parameter space of neural networks of that architecture. When unclear from context, we will specify the architecture in the notation for the dimension: $D\arch$.% is the total number of parameters (weights and biases) that define the matrices $A_1,\dots,A_{m+1}$.  The reader may verify that $D(n_0,\dots,n_m \vert 1) = $, where $n_{m+1} =1$. 

\begin{definition} \label{ParameterizedNN} Let $(n_0, \ldots, n_m ; 1)$ be a network architecture. The {\em parameterized family of ReLU neural networks with architecture $\arch$} is the map
 $\mathcal{F} : \mathbb{R}^{n_0} \times \mathbb{R}^{D} \rightarrow \mathbb{R}$
 defined as follows. For each ${\bf s} \in \mathbb{R}^{D}$, 
 $\mathcal{F}_{\bf s}: \mathbb{R}^{n_0} \times \{{\bf s}\} \rightarrow \mathbb{R}$ is the ReLU neural network map  associated to the  weights and biases given by ${\bf s}$.
\end{definition}

Observe that for each 
row of $W$, the corresponding row $(W_i \,\,|\,\,b_i) \in \mathbb{R}^{n+1}$, of the augmented matrix $(W \,\,|\,\,b) \in M_{m \times (n+1)}(\mathbb{R})$ determines a set: 

 \begin{equation} \label{eq:affsolnset}
 S_i := \{\vec{x} \in \mathbb{R}^n \,\,|\,\, (W_i\,\,|\,\,b_i) \cdot (\vec{x}\,\,|\,\,1) = 0\} \subseteq \mathbb{R}^n.
 \end{equation}  

 An {\em ordered affine solution set arrangement} in $\mathbb{R}^n$ is a finite ordered set, $\mathcal{S} = \{S_1, \ldots, S_m\}$, where each $S_i$ is the solution set to an affine-linear equation as described above in equation \eqref{eq:affsolnset}. If $W_i = {\bf 0}$, we say $S_i$ is {\em degenerate}. In this case $S_i$ is empty if $b_i \neq 0$ and $S_i$ is all of $\mathbb{R}^n$ if $b_i = 0$.  
 An  ordered affine solution set arrangement, $\mathcal{S} = \{S_1, \ldots, S_m\}$, is said to be {\em degenerate} if at least one $S_i$ is degenerate and {\em nondegenerate} otherwise. $\mathcal{S}$ is said to be {\em in general position} (aka {\em generic}) if, for all subsets $\{S_{i_1}, \ldots, S_{i_p}\} \subseteq \mathcal{S}$, it is the case that $S_{i_1} \cap \ldots \cap S_{i_p}$ is an affine-linear subspace of $\mathbb{R}^n$ of dimension $n-p$, where a negative-dimensional intersection is understood to be empty.  Note that generic implies nondegenerate.

If $S_i$ is nondegenerate, it is a hyperplane, and we will denote it by $H_i$. In this case, $\mathbb{R}^n \setminus H_i$ has two connected components,
\begin{eqnarray*} \label{eq:coorient}
	H_i^+ &:=& \{\vec{x} \in \mathbb{R}^n \,\,|\,\, (W_i \,\,|\,\,b_i) \cdot (\vec{x}\,\,|\,\,1) > 0\}\\
	H_i^- &:=& \{\vec{x} \in \mathbb{R}^n \,\,|\,\,  (W_i \,\,|\,\,b_i) \cdot (\vec{x}\,\,|\,\,1) < 0\}, \label{eq:orientations}
\end{eqnarray*}
which endows $H_i$ with a co-orientation, pointing toward $H_i^+$.  

If an ordered affine solution set $\mathcal{S}=\{S_1,\ldots,S_m\}$ arrangement is nondegenerate, we can associate to $\mathcal{S}$ an {\em ordered, co-oriented hyperplane arrangement}, $\bf{A} = \{\bf{H}_1, \ldots, \bf{H}_m\}$ in $\mathbb{R}^n$. By forgetting the ordering of the set and the co-orientations of the affine hyperplanes we obtain a classical hyperplane arrangement; that is, a finite set, $\mathcal{A} = \{H_1, \ldots, H_m\}$, of affine hyperplanes in $\mathbb{R}^n$.  If $\mathcal{S}=\{S_1,\ldots, S_m\}$ is degenerate, by first removing the degenerate solution sets from $\mathcal{S}$, we can still associate to $\mathcal{S}$ an ordered, co-oriented hyperplane arrangement $\bf{A}$ or classical hyperplane arrangement $\mathcal{A}$ -- albeit with fewer that $m$ hyperplanes.  
We shall use $\bf{A}$ (resp., ${\bf H}_i$) if the hyperplanes are equipped with a co-orientation and $\mathcal{A}$ (resp., $H_i$) if not.

The ordered affine solution set arrangement associated to a layer map, $\mathbb{R}^n \rightarrow \mathbb{R}^m$, of a neural network is the set $\mathcal{S} = \{S_1, \ldots, S_m\}$ as in equation \eqref{eq:affsolnset}. A layer map of a neural network is said to be {\em degenerate} if its associated affine solution set arrangement $\mathcal{S}$ is degenerate, and {\em nondegenerate} otherwise. A layer map of a neural network is said to be {\em generic} if the corresponding affine solution set arrangement is generic, and {\em nongeneric} otherwise.

\begin{definition} \label{defn:genericlayer} 
A neural network whose layer maps are all nondegenerate is said to be {\em nondegenerate}. A neural network whose layer maps are all generic is said to be {\em generic}.
\end{definition}

The \emph{rank} of a hyperplane arrangement $\mathcal{A}$ in $\mathbb{R}^n$ is the dimension of the space spanned by the normals to the hyperplanes in $\mathcal{A}$. 
(For more on the geometry and combinatorics of hyperplane arrangements, see \cite{Stanley}.)

The following lemmas (cf. \cite{Stanley}) follow in a straightforward way from standard facts in linear algebra.

\begin{lemma} \label{lem:GenericArrangementInvertibleMap}
Let $A:\mathbb{R}^n \to \mathbb{R}^n$ be an affine-linear map given by $A(\vec{x}) := (W|b) \vec{x}'$
and let 
$\mathcal{S} = \{S_1, \ldots, S_n\}$ be the associated affine solution set arrangement in $\mathbb{R}^n$ described in equation \eqref{eq:affsolnset}. 
Then $A$ is an invertible function if and only if $\mathcal{S}$ is generic.
\end{lemma}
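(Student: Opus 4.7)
The plan is to reduce the statement to a standard fact from linear algebra: an affine map $A(\vec{x}) = W\vec{x} + \vec{b}$ is invertible if and only if the matrix $W$ is invertible, i.e., if and only if the rows $W_1,\ldots,W_n$ of $W$ are linearly independent. Each solution set $S_i$ is cut out by the single affine equation $W_i\cdot\vec{x} + b_i = 0$, so the genericity condition on $\mathcal{S} = \{S_1,\ldots,S_n\}$ will translate into a rank condition on subsets of the rows $\{W_1,\ldots,W_n\}$.

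For the forward direction, I would assume $A$ is invertible, so the rows $W_1,\ldots,W_n$ form a basis of $\mathbb{R}^n$. In particular every $W_i$ is nonzero, so (by Remark \ref{rmk:genimpliesdegen}) each $S_i$ is a genuine hyperplane. Given any subset $\{W_{i_1},\ldots,W_{i_p}\}$, these rows are linearly independent, so the $p\times n$ matrix they form has rank $p$; by the rank--nullity theorem, the associated inhomogeneous system $W_{i_j}\cdot\vec{x} + b_{i_j} = 0$ for $j=1,\ldots,p$ is consistent and its solution set --- which is exactly $S_{i_1}\cap\cdots\cap S_{i_p}$ --- is an affine subspace of dimension $n-p$. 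This verifies the genericity condition of Definition \ref{defn:genericHA}.

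For the reverse direction, I would assume $\mathcal{S}$ is generic and specialize the genericity condition to the full subset $\{S_1,\ldots,S_n\}$. Then $S_1\cap\cdots\cap S_n$ is an affine subspace of dimension $n-n=0$, i.e., a single point $\vec{x}_0\in\mathbb{R}^n$. But $S_1\cap\cdots\cap S_n$ is precisely the solution set of the linear system $W\vec{x} = -\vec{b}$; since this system has a unique solution, the associated homogeneous system $W\vec{x} = \vec{0}$ has only the trivial solution, so $W$ is invertible and hence $A$ is invertible.

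There is no real obstacle here; the main care needed is simply bookkeeping around the degenerate-row case (ensuring that genericity really does force every $W_i$ to be nonzero, which follows from applying the genericity condition to the singleton $\{S_i\}$: the set $S_i$ must be an affine subspace of dimension $n-1$, hence a hyperplane, which by Remark \ref{rmk:genimpliesdegen} forces $W_i\neq \vec{0}$) and correctly invoking the rank--nullity theorem to identify the dimension of an intersection with the dimension of the kernel of the associated row-submatrix.
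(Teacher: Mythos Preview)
Your proposal is correct and takes essentially the same approach as the paper: both reduce invertibility of $A$ to invertibility of $W$, and then use the rank--nullity theorem to translate between rank conditions on row-submatrices of $W$ and dimension conditions on intersections of the $S_i$. The paper phrases this as a single chain of iffs rather than two separate directions, but the content is the same; your reverse direction (specializing genericity to the full subset to get a single-point intersection) is in fact a slight streamlining of the paper's argument.
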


%\begin{proof} Translation by $\vec{b}$ is an invertible operation, so $A$ is invertible iff $W$ has row rank $n$. But this is true iff the rank of any $k \times n$ matrix $W'$ obtained by choosing $k$ rows of $W$ has dimension $k$. This is equivalent to the kernel of $W'$ having dimension $n-k$, which is equivalent to every $k$--fold intersection of affine solution sets in $\mathcal{S}$ being an affine-linear subspace of dimension $n-k$ (i.e., $\mathcal{S}$ is generic).
%\end{proof}

\begin{lemma} \label{lem:genHAdense} 
Let $\arch$ be a network architecture. For almost every ${\bf s} \in \mathbb{R}^{D}$, the neural network $\mathcal{F}_{\bf s}$ is generic.  
\end{lemma}

%%%%%%%%%%%%%%%%%%%%%%

\section{Polyhedral complexes} \label{sec:PolyCpx}
 We will need some basic facts about the geometry and combinatorics of convex polytopes, polyhedral sets, and polyhedral complexes. We quickly recall relevant background and terminology, referring the interested reader to \cite{Grunbaum, Grunert} for a more thorough treatment.

A {\em polyhedral set $\mathcal{P}$ in $\mathbb{R}^n$} is an intersection of finitely many  closed affine half spaces $H_1^+, \ldots,  H_m^+ \subseteq \mathbb{R}^n.$ A {\em convex polytope} in $\mathbb{R}^n$ is a bounded polyhedral set. Note that a polyhedral set is an intersection of convex sets, and hence convex.  Each region of a hyperplane arrangement is the interior of a polyhedral set.

A hyperplane $H$ in $\mathbb{R}^n$ is a {\em cutting hyperplane} of a polyhedral set $\mathcal{P}$ and is said to {\em cut} $\mathcal{P}$ if there exists $x_1, x_2 \in \mathcal{P}$ with $x_1 \in \mathcal{P} \cap H^+$ and $x_2 \in \mathcal{P} \cap H^-$.
A hyperplane $H$ in $\mathbb{R}^n$ is a {\em supporting hyperplane} of $\mathcal{P}$ and is said to {\em support} $\mathcal{P}$ if $H$ does not cut $\mathcal{P}$ and $H \cap \mathcal{P} \neq \emptyset$.

For any set $S \subset \mathbb{R}^n$, the {\em affine hull} of $S$, denoted $\mbox{aff}(S)$, is the intersection of all affine-linear subspaces of $\mathbb{R}^n$ containing $S$.  
  The \emph{dimension} of a polyhedral set is the dimension of its affine hull.

Let  $\mathcal{P}$ be a polyhedral set of dimension $n$. A subset $F \subset \mathcal{P}$ is said to be a {\em face} of $\mathcal{P}$ if either $F = \emptyset$, $F = \mathcal{P}$, or $F = H \cap \mathcal{P}$ for some supporting hyperplane of $\mathcal{P}$.  $\emptyset$ and $\mathcal{P}$ are called the {\em improper} faces of $\mathcal{P}$. All other faces are {\em proper}.
 A {\em $k$--face} of $\mathcal{P}$ is a face of $\mathcal{P}$ that has dimension $k$. 
A {\em facet} of $\mathcal{P}$ is an $(n-1)$--face of $\mathcal{P}$.
 A {\em vertex} of $\mathcal{P}$ is a $0$--face of $\mathcal{P}$. 

\begin{lemma} \cite[Sec 26]{Grunbaum} \label{lem:convpoly} Every polyhedral set $\mathcal{P}$ has an {\em irredundant} realization as an intersection $\mathcal{P} = H_1^+ \cap \ldots \cap H_m^+$ satisfying the property that $$\mathcal{P} \neq \bigcap_{j \neq i}H_j^+$$ for each $i = 1, \ldots, m$. Moreover, for an irredundant realization as above, the set of facets of $\mathcal{P}$ is precisely the set of {\em proper} faces of the form $\mathcal{P} \cap H_i$.
\end{lemma}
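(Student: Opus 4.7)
The plan is to establish the two assertions of the lemma in sequence, with the bulk of the work going into the characterization of facets.

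For the existence of an irredundant realization, I would begin with any representation of $\mathcal{P}$ as an intersection of finitely many closed half-spaces and greedily discard any $H_i^+$ for which $\mathcal{P}_i = \mathcal{P}$. Since only finitely many half-spaces appear, this process terminates in an irredundant realization.

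For the ``moreover'' part, I would first show that every $F_i := \mathcal{P} \cap H_i$ arising from an irredundant realization is a facet. By irredundancy $\mathcal{P}_i \setminus \mathcal{P}$ is nonempty, and since $\mathcal{P}_i \supseteq \mathcal{P}$ has dimension $n$, I can refine this to pick a point $x$ lying in the topological interior of $\mathcal{P}_i$ but not in $\mathcal{P}$. Then $x \in \textrm{int}(H_j^+)$ for every $j \neq i$ while $x$ lies strictly on the opposite side of $H_i$. Picking any $y \in \textrm{int}(\mathcal{P})$ (nonempty because $\dim \mathcal{P} = n$), the segment from $y$ to $x$ remains in $\textrm{int}(H_j^+)$ for each $j \neq i$ by convexity, and crosses $H_i$ at a single point $z$. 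A sufficiently small ball about $z$ inside $H_i$ is then contained in $\bigcap_{j \neq i}\textrm{int}(H_j^+) \cap H_i \subseteq \mathcal{P}$, so $\dim F_i = n-1$. Combined with $\mathcal{P} \subseteq H_i^+$, this makes $H_i$ a supporting hyperplane of $\mathcal{P}$ and $F_i$ a proper face of dimension $n-1$. Since every proper face of $\mathcal{P}$ has dimension at most $n-1$, and any face of dimension $n-1$ coincides with the intersection of $\mathcal{P}$ with its affine hull, $F_i$ is maximal among proper faces, hence a facet.

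Conversely, I would let $F$ be any facet of $\mathcal{P}$, write $F = \mathcal{P} \cap H$ for some supporting hyperplane $H$, and note that $\dim F = n-1$, so the affine hull of $F$ equals $H$. Pick $z$ in the relative interior of $F$, so a small $H$-neighborhood of $z$ lies in $F \subseteq \mathcal{P}$. The active set $I(z) := \{i : z \in H_i\}$ is nonempty, since otherwise $z$ would lie in $\textrm{int}(\mathcal{P})$, contradicting the fact that $H$ supports $\mathcal{P}$. For any $i \in I(z)$, the $H$-neighborhood of $z$ lies in $H_i^+$; but if $H \neq H_i$, then $H \cap H_i$ has dimension at most $n-2$ within $H$, which forces the neighborhood to contain points strictly on the opposite side of $H_i$, a contradiction. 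Hence $H = H_i$ and $F = F_i$.

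The main obstacle is the careful bookkeeping with open versus closed half-spaces needed to secure the dimension count $\dim F_i = n-1$ from irredundancy and full-dimensionality of $\mathcal{P}$; this geometric input is what turns a purely combinatorial statement about ``non-redundant constraints'' into a statement about $(n-1)$-dimensional faces. Once this is in hand, maximality of each $F_i$ and the converse identification $F = F_i$ for an arbitrary facet are routine consequences of the fact that a face of a polyhedral set is determined by its affine hull when its dimension is one less than the ambient.
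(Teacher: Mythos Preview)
Your proof is correct and follows the standard line of argument for this classical fact. The paper itself does not supply a proof of this lemma at all: it simply cites \cite[Sec.~2.6]{Grunbaum} and moves on. What you have written is essentially the textbook argument one finds in Gr\"unbaum or Ziegler, so there is nothing to compare against beyond noting that your write-up fills in what the paper leaves to the reference.

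One small point worth tightening: when you assert that you ``can refine this to pick a point $x$ lying in the topological interior of $\mathcal{P}_i$ but not in $\mathcal{P}$,'' you are implicitly using the standard convexity fact that for $x_0 \in \mathcal{P}_i \setminus \mathcal{P}$ and $y \in \operatorname{int}(\mathcal{P}) \subseteq \operatorname{int}(\mathcal{P}_i)$, every point of the half-open segment $(x_0, y]$ lies in $\operatorname{int}(\mathcal{P}_i)$, and points sufficiently close to $x_0$ remain in the open half-space $\operatorname{int}(H_i^-)$, hence outside $\mathcal{P}$. Making that explicit would remove any doubt about the step.
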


 A {\em polyhedral complex} $\mathcal{C}$ of dimension $d$ is a {\em finite}  set 
 of polyhedral sets of dimension $k$, for $0 \leq k \leq d$, called the \emph{cells} of $\mathcal{C}$, such that i)  If $P \in \mathcal{C}$, then every face of $P$ is in $\mathcal{C}$, and ii)  if $P, Q \in \mathcal{C}$, then $P \cap Q$ is a single mutual face of $P$ and $Q$.  The \emph{domain} or \emph{underlying set} $|\mathcal{C}|$ of a polyhedral complex $\mathcal{C}$ is the union of its cells. If $\mathcal{C}$ is a polyhedral complex embedded in $\mathbb{R}^n$ and $|\mathcal{C} | = \mathbb{R}^n$, we call $\mathcal{C}$ a {\em polyhedral decomposition of $\mathbb{R}^n$}.

A \emph{polyhedral subcomplex} of 
$\mathcal{C}$ is a subset $\mathcal{C}' \subseteq \mathcal{C}$ such that for every cell $P$ in  $\mathcal{C}'$, every face of $P$ is also in $\mathcal{C}'$.  The {\em $k$--skeleton} of $\mathcal{C}$, denoted $\mathcal{C}_k$, is the subcomplex of all polyhedral sets of $\mathcal{C}$ of dimension $i$, where $0 \leq i \leq k$. 

 Any hyperplane arrangement $\mathcal{A}$ in $\mathbb{R}^n$ induces a polyhedral decomposition,  $\CA$,  of  $\mathbb{R}^n$
as follows.  Define the $n$-dimensional cells of  $\CA$  to be the closures of the regions of $\mathcal{A}$, and for $0 < i < n$, inductively define the $i$-dimensional cells of $\CA$ to be the facets of the $i+1$ dimensional cells. 
Similarly,  any affine solution set arrangement $\mathcal{S} = \{S_1, \ldots, S_m\}$ in $\mathbb{R}^n$ induces a polyhedral decomposition, $\mathcal{C}(\mathcal{S})$, formed by first removing the degenerate affine solution sets from $\mathcal{S}$ to obtain a hyperplane arrangement, $\mathcal{A}$, and setting $\mathcal{C}(\mathcal{S}) := \CA$. 

For polyhedral complexes $M$ and $R$, a map  $f: |M| \rightarrow |R|$ is {\em cellular} if for every cell $K \in M$ there exists a cell $L \in R$ with $f(K) \subseteq L$.  For a polyhedral complex $M$ embedded in $\mathbb{R}^m$, a map $f: |M| \to \mathbb{R}^r$ is \emph{affine-linear on cells of $M$}, if for each cell $K \in M$, the restriction of $f$ to $|K|$ is  affine-linear.\footnote{Note that \cite{Grunert}, following the classical texts on PL geometry, uses ``linear on cells" rather than ``affine-linear on cells," since PL charts need not specify an origin. Since our polyhedral complexes are canonically embedded in $\mathbb{R}^n$, we use ``affine-linear" throughout.}

A polyhedral complex $M'$ is said to be a {\em subdivision} of a polyhedral complex $M$ if $|M| = |M'|$ and 
each cell of $M'$ is contained in a cell of $M$.

Of particular interest here is the \emph{level set complex}  associated to a pair of polyhedral complexes $M$ and $R$, with $R$ embedded in $\mathbb{R}^r$, and a map $f:|M| \to \mathbb{R}^r$ affine-linear on cells of $M$.  The level set complex, which is a polyhedral complex by  \cite[Lemma 2.5]{Grunert}, is the complex \[ M_{\in R} :=  \{ S  \cap f^{-1}(Y) \mid S \in M, Y \in R\}.\]

In the present work, we focus primarily on maps $\mathbb{R}^n \rightarrow \mathbb{R}$, where the single $0$--cell of the polyhedral complex $R \subseteq \mathbb{R}$ is a threshold $t \in \mathbb{R}$ and the two $1$--cells are the unbounded intervals $(-\infty, t]$ and $[t, \infty)$.

\section{Transversality} \label{sec:Trans}

 In this subsection, we recall classical transversality results and state and extend them to situations of relevance to us.

\subsection{Classical transversality results} We follow the terminology and notation of \cite{GuilleminPollack}.

Denote the tangent space of a smooth manifold $X$ at a point $x \in X$ by $T_xX$.  Recall that for a smooth map $f:X \to Y$ of manifolds with $f(x) = y$, the derivative $df_x$ is a linear map between tangent spaces, $df_x:T_xX \to T_yY$, and the image $df_x(T_xX)$ is a linear subspace of $T_yY$.  If $U$ and $V$ are two subspaces of a linear space $W$, then their sum, $U + V$, is the subspace $\{ u + v: u \in U, v \in V\}$.  

\medskip

In Definition \ref{defn:maptransverse} and Theorems \ref{thm:Transmaps} and \ref{thm:ParTrans}, assume $X$ to be a smooth manifold with or without boundary, $Y$ and $Z$ to be smooth manifolds without boundary, $Z$ a smoothly embedded submanifold of $Y$, and $f:X \to Y$ a smooth map.  

\begin{definition} \label{defn:maptransverse} We say that $f$ is {\em transverse} to $Z$ and write $f \pitchfork Z$ if 
\begin{equation} \label{eq:maptransverse}
df_p(T_pX) + T_{f(p)}Z = T_{f(p)}Y
\end{equation} 
for all $p \in f^{-1}(Z)$.
\end{definition}

 Note that Definition \ref{defn:maptransverse} allows for the possibility that $X$ is a manifold of dimension $0$, i.e. consists of--without loss of generality--a single point $p$.   In this case $T_p\{p\} = \{0\}$ and so $df_p(T_p\{p\}) = \{0\}$, so condition \eqref{eq:maptransverse} reduces to the condition that if $f(p) \in Z$, then $Z$ and $Y$ must agree in a neighborhood of $f(p)$. Note also
that if the image $f(X)$ does not intersect $Z$, then condition \eqref{eq:maptransverse} is vacuously true, hence $f \pitchfork Z$.

\begin{theorem}[Map Transversality Theorem]  \cite[p.~28]{GuilleminPollack} \label{thm:Transmaps} If $f$ is transverse to $Z$, then $f^{-1}(Z)$ is an embedded submanifold of X. Furthermore, the codimension of $f^{-1}(Z)$ in $X$ equals the codimension of $Z$ in $Y$.  
\end{theorem}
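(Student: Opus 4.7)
The plan is to reduce the theorem to the classical Regular Value Theorem for smooth maps (the Submersion Theorem in preimage form) by flattening $Z$ locally. Because being an embedded submanifold of a given codimension is a local property, it suffices to show that every $p \in f^{-1}(Z)$ admits an open neighborhood $U \subseteq X$ such that $U \cap f^{-1}(Z)$ is an embedded submanifold of $U$ whose codimension equals the codimension of $Z$ in $Y$.

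Fix $p \in f^{-1}(Z)$, write $q := f(p)$, and set $k := \dim Z$, $n := \dim Y$. Since $Z$ is smoothly embedded in $Y$, I choose a slice chart $\varphi : V \to \mathbb{R}^n$ on an open neighborhood $V$ of $q$ in $Y$ so that $\varphi(V \cap Z) = \varphi(V) \cap (\mathbb{R}^k \times \{0\})$. Let $\pi : \mathbb{R}^n \to \mathbb{R}^{n-k}$ denote projection onto the last $n-k$ coordinates and set $g := \pi \circ \varphi \circ f$ on the open set $U := f^{-1}(V)$. By construction, $U \cap f^{-1}(Z) = g^{-1}(0)$, and under $d\varphi_q$ the subspace $T_q Z$ corresponds to $\mathbb{R}^k \times \{0\} = \ker(d\pi_{\varphi(q)})$. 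Applying $d\pi_{\varphi(q)} \circ d\varphi_q$ to both sides of the transversality equation $df_p(T_p X) + T_q Z = T_q Y$ then kills the $T_q Z$ summand and sends $T_q Y$ onto $\mathbb{R}^{n-k}$, so $dg_p = d\pi_{\varphi(q)} \circ d\varphi_q \circ df_p$ is surjective. Since transversality holds at every point of $f^{-1}(Z)$, the value $0 \in \mathbb{R}^{n-k}$ is a regular value of $g$ on $U$, and the Regular Value Theorem exhibits $g^{-1}(0) = U \cap f^{-1}(Z)$ as an embedded submanifold of $U$ of codimension $n - k$. Gluing these local descriptions along the open cover of $f^{-1}(Z)$ yields the required global structure, with codimension equal to $\mathrm{codim}(Z, Y) = n - k$.

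The one step where I expect genuine care to be needed is the case $p \in \partial X$. At interior points the argument above is entirely standard, but at a boundary point one must appeal to the manifold-with-boundary version of the Regular Value Theorem and decide how to interpret the resulting local model. The cleanest route is to apply the interior version to $g$ restricted to the interior of $X$, then separately produce a boundary chart by extending $f$ to a slightly larger open manifold and intersecting with $\partial X$; in this formulation, surjectivity of $dg_p$ alone suffices to give an embedded submanifold near $p$, with the codimension count unchanged. Once this boundary normal form is in hand, the rest of the proof is a direct translation of the transversality condition into surjectivity of $dg_p$, and no further work is required.
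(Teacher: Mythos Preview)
Your argument is the standard reduction to the Regular Value Theorem via a slice chart for $Z$, and it is correct. However, there is nothing to compare against: the paper does not supply a proof of this statement at all. It is quoted as a classical background result with a citation to \cite[p.~28]{GuilleminPollack}, and the paper simply uses it as a black box (e.g.\ in the corollary immediately following and in Lemma~\ref{lem:genthreshconvpoly}). Your write-up is essentially the proof one finds in Guillemin--Pollack, so in that sense it matches the ``paper's approach'' by proxy; the only caveat is that your boundary discussion is a bit loose, since in the setting of the paper (Definition~\ref{defn:maptransverse} and the surrounding text) one usually also assumes that the restriction $\partial f := f|_{\partial X}$ is transverse to $Z$ in order to conclude that $f^{-1}(Z)$ is a neat submanifold with boundary. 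For the paper's purposes this subtlety never arises, because the theorem is only invoked on open cells (interiors of polyhedral cells, which are manifolds without boundary).
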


The Map Transversality Theorem uses the standard convention that the empty set can assume any dimension.  If $f^{-1}(Z) = \emptyset$, one considers the codimension of $f^{-1}(Z)$ in $X$ to be the codimension of $Z$ in $Y$.

\begin{theorem}[Parametric Transversality Theorem] \cite[p.~68]{GuilleminPollack} \label{thm:ParTrans} Let $S$ be a smooth manifold and let $F: X\times S\rightarrow Y$ be a smooth map.  If  $F$ is  transverse to $Z$, then for (Lebesgue) almost every $s\in S$ the restriction map $F_s: X \to Y$ given by $F_s(x) = F(x,s)$ is transverse to $Z$.
\end{theorem}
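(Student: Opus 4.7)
The plan is to follow the standard route via Sard's theorem applied to the projection onto the parameter space. First, since $F \pitchfork Z$, the Map Transversality Theorem (Theorem \ref{thm:Transmaps}) says that $W := F^{-1}(Z)$ is a smooth embedded submanifold of $X \times S$ of codimension equal to the codimension of $Z$ in $Y$. Let $\pi : X \times S \to S$ denote projection onto the second factor and consider its restriction $\pi|_W : W \to S$, which is a smooth map of manifolds. By Sard's theorem, the set of critical values of $\pi|_W$ has Lebesgue measure zero in $S$, so almost every $s \in S$ is a regular value of $\pi|_W$. The theorem will therefore follow from the equivalence: $s \in S$ is a regular value of $\pi|_W$ if and only if $F_s \pitchfork Z$.

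The key step is to verify the implication "regular value $\Rightarrow$ $F_s$ transverse." Fix such an $s$ and pick any $x \in F_s^{-1}(Z)$, letting $z = F_s(x) = F(x,s) \in Z$. Given an arbitrary $v \in T_z Y$, I use $F \pitchfork Z$ at $(x,s)$ to write
\[
v = dF_{(x,s)}(w,u) + \zeta, \qquad (w,u) \in T_xX \oplus T_sS, \ \zeta \in T_zZ.
\]
Because $s$ is a regular value of $\pi|_W$, the differential $d(\pi|_W)_{(x,s)} : T_{(x,s)}W \to T_sS$ is surjective, so there exists $(w',u) \in T_{(x,s)}W$ with the same second component $u$. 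Since $W = F^{-1}(Z)$ and $F \pitchfork Z$, the tangent space to $W$ at $(x,s)$ is exactly $(dF_{(x,s)})^{-1}(T_zZ)$, hence $dF_{(x,s)}(w',u) \in T_zZ$. Writing
\[
v = dF_{(x,s)}(w - w', 0) + dF_{(x,s)}(w',u) + \zeta = (dF_s)_x(w - w') + \zeta',
\]
with $\zeta' \in T_zZ$, exhibits $v$ as lying in $(dF_s)_x(T_xX) + T_zZ$, so $F_s \pitchfork Z$ at $x$. The reverse implication, while not strictly needed for the theorem, follows by a similar diagram chase and is a useful sanity check.

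The main technical subtlety is the identification $T_{(x,s)}W = (dF_{(x,s)})^{-1}(T_zZ)$, which is where transversality of $F$ with $Z$ is used in an essential way; once this is in hand the tangent-space bookkeeping is purely linear algebra. A secondary point is the treatment of the boundary case when $X$ has boundary, which is the setting stated in the excerpt: here one must apply Sard's theorem separately to $\pi|_{W}$ and to $\pi|_{\partial W}$, intersect the two full-measure sets of regular values, and check that the transversality conclusion extends to points $x \in \partial X$. These boundary considerations are the only real obstacle beyond the standard interior argument, and they are handled exactly as in \cite[p.~68]{GuilleminPollack}.
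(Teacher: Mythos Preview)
The paper does not supply its own proof of this theorem; it is quoted verbatim as a classical result from \cite[p.~68]{GuilleminPollack} and used as a black box. Your argument is correct and is exactly the standard proof from that reference: pull back $Z$ to the submanifold $W=F^{-1}(Z)$, apply Sard's theorem to the projection $\pi|_W:W\to S$, and check via the tangent-space identity $T_{(x,s)}W=(dF_{(x,s)})^{-1}(T_zZ)$ that the regular values of $\pi|_W$ are precisely the parameters $s$ for which $F_s\pitchfork Z$.
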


We wish to apply Theorem \ref{thm:ParTrans} to the parametrized family of neural networks of a fixed architecture (Definition \ref{ParameterizedNN}), but this family does not satisfy the smoothness requirements, so we develop the necessary non-smooth analogues in \S \ref{ss:nonsmoothtransversality}.

\subsection{Extensions of the classical transversality results to maps on polyhedral complexes that are smooth on cells} \label{ss:nonsmoothtransversality} \

We introduce a polyhedral analogue of Definition \ref{defn:maptransverse}:

\begin{definition} \label{defn:polytransverse} Let $X$ be a polyhedral complex of dimension $d$ in $\mathbb{R}^n$, let $f: |X| \rightarrow \mathbb{R}^r$ be a map  which is smooth  on all cells of $X$
 and let $Z$ be a  smoothly embedded submanifold (without boundary) of $\mathbb{R}^r$. We say that $f$ is  \emph{transverse on cells} to $Z$ and write $f \pitchfork_c Z$ if: 
\begin{enumerate}
	\item the restriction of $f$ to the {\em interior}, $\mbox{int}(C)$, of every $k$--cell $C$ of $X$ is transverse to $Z$ (in the sense of Definition \ref{defn:maptransverse}) when $1 \leq k \leq d$, and 
	\item \label{item:0celltransversereq} the restriction of $f$ to every $0$--cell of $X$ is transverse to $Z$. 
\end{enumerate}
\end{definition}

Note that a function defined on a $0$-cell is considered to be smooth. We will be particularly interested in the case in which $r=1$ and $Z =\{t\}$ is a threshold in $\mathbb{R}$.  

\begin{corollary} \label{cor:codimtrans} Let $X$ be a polyhedral complex of dimension $d$ in $\mathbb{R}^n$.  Let $f : |X| \rightarrow \mathbb{R}^r$ be a map which is smooth on cells of $X$ and let $Z$ be a smoothly  embedded submanifold of $\mathbb{R}^r$ for which $f \pitchfork_c Z$. Then we have:
\begin{itemize}
	\item For every $k$--cell $C\in X$, where $1 \leq k \leq d$, $f^{-1}(Z) \cap \mbox{int}(C)$ is a (possibly empty) smoothly embedded submanifold of $\textup{int}(C)$.  Furthermore, the codimension of $f^{-1}(Z) \cap \mbox{int}(C)$ in $\mbox{int}(C)$ equals the codimension of $Z$ in $\mathbb{R}^r$. 
	\item If $\textrm{dim}(Z) < r$, then for every $0$--cell $C \in X$ (vertex), $f(C) \not \in Z$. 
\end{itemize}
\end{corollary}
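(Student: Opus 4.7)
The plan is to reduce the statement on each cell individually to the classical Map Transversality Theorem (Theorem \ref{thm:Transmaps}), using the fact that the interior of a $k$-cell is an honest smooth $k$-dimensional manifold without boundary (it is an open subset of its affine hull, which is an affine $k$-plane in $\mathbb{R}^n$). Since the conclusion of the corollary makes an assertion cell-by-cell rather than globally on $|X|$, no gluing step between cells is required.

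For the first bullet, I would fix a cell $C \in X$ of dimension $k$ with $1 \leq k \leq d$ and consider the restriction $g := f|_{\mathrm{int}(C)} : \mathrm{int}(C) \to \mathbb{R}^r$. By hypothesis $f$ is smooth on cells of $X$, so $g$ is a smooth map of smooth manifolds without boundary, and by Definition \ref{defn:polytransverse} (i), $g \pitchfork Z$ in the sense of Definition \ref{defn:maptransverse}. Applying Theorem \ref{thm:Transmaps} with $X$ replaced by $\mathrm{int}(C)$ and $Y$ replaced by $\mathbb{R}^r$ yields that $g^{-1}(Z) = f^{-1}(Z) \cap \mathrm{int}(C)$ is a smoothly embedded submanifold of $\mathrm{int}(C)$, with codimension in $\mathrm{int}(C)$ equal to the codimension of $Z$ in $\mathbb{R}^r$. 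This is exactly the conclusion of the first bullet.

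For the second bullet, I would invoke Definition \ref{defn:polytransverse} (ii) directly: the restriction of $f$ to a vertex $v$ is transverse to $Z$. Unwinding Definition \ref{defn:maptransverse} at a $0$-dimensional domain, the condition $df_v(T_v\{v\}) + T_{f(v)}Z = T_{f(v)}\mathbb{R}^r$ reduces to $T_{f(v)}Z = T_{f(v)}\mathbb{R}^r$ whenever $f(v) \in Z$, which forces $\dim Z = r$ near $f(v)$. Under the hypothesis $\dim Z < r$, this is impossible, so no vertex can map into $Z$.

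I do not anticipate a substantive obstacle: the work is essentially bookkeeping to confirm that Definition \ref{defn:polytransverse} is engineered precisely so that the classical Theorem \ref{thm:Transmaps} applies cell-by-cell. The only mild care needed is to work with the (open) interior $\mathrm{int}(C)$ rather than $C$ itself, since Theorem \ref{thm:Transmaps} is stated for manifolds (possibly with boundary) in such a way that one avoids any issue of transversality along a cell's boundary — those lower-dimensional strata are separate cells of $X$ and are handled by their own instances of the argument.
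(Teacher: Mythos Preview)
Your proposal is correct and follows exactly the paper's approach: the paper's proof consists of the single sentence that the result follows immediately from Theorem~\ref{thm:Transmaps} since the interior of any polyhedral set of dimension $k \in \mathbb{N}$ is a nonempty smooth manifold. Your write-up is simply a more detailed unpacking of that same observation, including the explicit handling of the vertex case already noted in the paragraph following Definition~\ref{defn:polytransverse}.
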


\begin{proof}This follows immediately from Theorem~\ref{thm:Transmaps}, since the interior of any polyhedral set of dimension $k \in \mathbb{N}$ is a nonempty smooth manifold. Note also that condition \eqref{item:0celltransversereq} of Definition \ref{defn:polytransverse} implies that if $f$ is transverse on cells to $Z$ and there exists a vertex $v$ of $X$ such that $f(v) \in Z$, then $Z$ must have the full dimension $r$.  Thus, if $\textup{dim}(Z) < r$, $f$ being transverse on cells to $Z$ implies no vertex of $X$ is sent by $f$ to $Z$. 
\end{proof}

We will need the following version of Theorem \ref{thm:ParTrans} for families of maps that are smooth on cells of a polyhedral complex.

\begin{proposition} \label{prop:ParTransPolyCpx} Let $X$ be a polyhedral complex in $\mathbb{R}^n$, 
 $S$ a smooth manifold without boundary, and $Z \subseteq \mathbb{R}^r$ a smoothly embedded submanifold without boundary. 
Let $F: |X| \times S \to \mathbb{R}^n$ be a map such that for each cell $C \in X$, the restricted map $F\vert_{C \times S} : C \times S \to \mathbb{R}^r$ is smooth and the further restricted map  $F\vert_{C' \times S}: C' \times S \rightarrow \mathbb{R}^r,$
where 
\[C' =  \begin{cases} 
\textrm{int}(C) & \textrm{ if } $C$ \textrm{ is of dimension } \geq 1,\\
C  & \textrm{ if } $C$ \textrm{ is of dimension } 0, \\
 \end{cases} \]
is  transverse to $Z$.
Then for (Lebesgue) almost every $s \in S$, the map \[f_s: |X| \rightarrow \mathbb{R}^r\] given by $f_s(x) = F(x,s)$ is transverse on cells to $Z$.
\end{proposition}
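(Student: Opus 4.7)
The plan is a direct reduction to the classical Parametric Transversality Theorem (Theorem \ref{thm:ParTrans}), applied one cell at a time. The two essential ingredients are (a) that for each cell $C \in X$, the modified domain $C'$ is a smooth manifold without boundary on which the classical theorem applies, and (b) that a polyhedral complex has only finitely many cells, so a finite union of null sets in $S$ remains null.

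First, for each cell $C \in X$ I would verify that $C'$ is a smooth manifold without boundary so that Theorem \ref{thm:ParTrans} can be invoked. If $\dim C \geq 1$, then $C' = \mathrm{int}(C)$ is an open subset of the affine hull of $C$, hence a smooth manifold of dimension $\dim C$ without boundary. If $\dim C = 0$, then $C' = C$ is a single point, a $0$-dimensional smooth manifold (the transversality condition in this degenerate case reduces to the statement that $f(C) \notin Z$ whenever $Z$ has positive codimension, which is consistent with condition (ii) of Definition \ref{defn:polytransverse}). In either case the hypotheses of Theorem \ref{thm:ParTrans} are met: by assumption, $F|_{C' \times S}$ is smooth and transverse to $Z$. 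Applying the theorem yields a (Lebesgue) null set $N_C \subseteq S$ such that for every $s \in S \setminus N_C$, the restriction $(f_s)|_{C'}$ is transverse to $Z$.

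Second, I would use the finiteness of the polyhedral complex $X$, emphasized in Definition \ref{defn:convpolycpx} and the subsequent remark, to form the finite union
\[
N \;:=\; \bigcup_{C \in X} N_C \;\subseteq\; S.
\]
Since a finite union of Lebesgue null sets is null, $N$ has Lebesgue measure zero in $S$. For any $s \in S \setminus N$, the restriction of $f_s$ to $C'$ is transverse to $Z$ for every cell $C$ of $X$, which is precisely the condition given in Definition \ref{defn:polytransverse} for $f_s \pitchfork_c Z$. This establishes the conclusion for almost every $s \in S$.

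The main obstacle is essentially bookkeeping rather than analysis: confirming that the vertex case satisfies the hypotheses of Theorem \ref{thm:ParTrans} in the degenerate sense, and verifying that the notion of transversality assumed in the hypothesis aligns cell-by-cell with the cellular transversality condition of Definition \ref{defn:polytransverse}. No new analytic machinery beyond the classical parametric transversality theorem and the finiteness of polyhedral complexes is required.
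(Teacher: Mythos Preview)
Your proposal is correct and follows essentially the same approach as the paper: apply the classical Parametric Transversality Theorem cell-by-cell to obtain null sets $N_C$, then use finiteness of the polyhedral complex to conclude that the union is still null. The paper's proof is slightly more terse, omitting your explicit verification that each $C'$ is a smooth manifold, but the argument is identical in substance.
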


\begin{proof} For each cell $C \in X$, the Parametric Transversality Theorem implies that there exists a null set $S_C \subset S$ such that $f_s\vert_{C'}$ is transverse to $Z$ for every $s \in S \setminus S_C$.  Let $S_X = \bigcup_{C \in X} S_C$; as a finite union of null sets, $S_X$ is a null set.  
Then for every $C \in X$ and $s \in S \setminus S_X$, we have that  $f_s\vert_{C'}:C' \rightarrow \mathbb{R}^r$ 
is transverse to $Z$. Hence $f_s$ is transverse on cells to $Z$ for all $s \in S \setminus S_X$. 
\end{proof}

\section{Maps on polyhedral complexes and transversal thresholds} \label{sec:PolyMapsTransThresh}
 We now turn to applying the transversality statements developed in the previous section to ReLU neural network maps.  

\begin{definition} \label{def:transversethreshold}  Let $M$ be a polyhedral complex embedded in $\mathbb{R}^{n_0}$, $n_0 \in \mathbb{N}$, and let $F: |M| \rightarrow \mathbb{R}$ be a map that is smooth on cells. 
A threshold $t \in \mathbb{R}$ is said to be {\em transversal} 
for  $F$ and $M$ 
if $F$ is  transverse on cells (Definition  \ref{defn:polytransverse}) to the submanifold $\{t\} \subseteq \mathbb{R}$.  
In this case, we write $F \pitchfork_c  \{t\}$. 
\end{definition}

Although Section \ref{ss:nonsmoothtransversality} and Definition \ref{def:transversethreshold} require only that the map $F$ be smooth on cells, from this point onwards we restrict to the case that $F$ is affine-linear on cells, since this is the setting relevant for understanding ReLU neural network maps.  For the remainder of this section, let $M$ be a polyhedral complex embedded in $\mathbb{R}^{n_0}$, $n_0 \in \mathbb{N}$, and let $F: |M| \rightarrow \mathbb{R}$ be a map that is affine-linear on cells.

\begin{definition} \label{defn:nonconstantstar} 
A point $x \in M$ is said to have a {\em $F$-nonconstant  cellular neighborhood in $M$}  if $F$ is nonconstant on each cell of $M$ containing $x$.
\end{definition}

 Note that each vertex of $M$ is itself a cell on which $F$ is necessarily constant; hence, no vertex of $M$ can be said to have a $F$-nonconstant cellular neighborhood.

\begin{lemma} \label{lem:nonconscellneighborhood}
A threshold $t \in \mathbb{R}$ is transversal for $F$ and $M$ if and only if each point $p \in F^{-1}(\{t\})$ has a  $F$-nonconstant cellular neighborhood in $M$.
\end{lemma}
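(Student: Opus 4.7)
The plan is to unpack Definition \ref{defn:polytransverse} in the affine-linear setting. The key preliminary observation I would first establish is that for any $k$-cell $C$ with $k \geq 1$, the map $F|_C$ is affine linear, so the differential $dF_p$ restricted to $T_p\mbox{int}(C)$ is either identically the zero functional (when $F|_C$ is constant) or surjective onto $\mathbb{R}$ (when $F|_C$ is non-constant), independent of the choice of $p \in \mbox{int}(C)$. Thus the condition $F|_{\mbox{int}(C)} \pitchfork \{t\}$ translates to the disjunction: either $F|_C$ is non-constant, or $F|_C$ is constant with value $\neq t$. For a $0$-cell $\{v\}$, transversality of $F|_{\{v\}}$ to $\{t\}$ reduces to $F(v) \neq t$, since $\{t\}$ has codimension $1$ in $\mathbb{R}$ while $T_v\{v\} = \{0\}$ (as noted immediately after Definition \ref{defn:polytransverse}).

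For the forward direction, I would fix $p \in F^{-1}(\{t\})$ and let $C$ be any cell of $M$ containing $p$. If $\dim C = 0$ then $C = \{p\}$, and $F(p) = t$ would violate the vertex transversality clause; hence $p$ cannot be a vertex of $M$, and every cell containing $p$ has dimension $\geq 1$. Suppose for contradiction that $F|_C$ is constant; since $F(p) = t$, the constant value must be $t$, so every $q \in \mbox{int}(C)$ (nonempty because $\dim C \geq 1$) sits in $F^{-1}(\{t\}) \cap \mbox{int}(C)$ with $dF_q \equiv 0$, contradicting $F|_{\mbox{int}(C)} \pitchfork \{t\}$. Hence $F|_C$ is non-constant, as required.

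For the reverse direction, I would verify each clause of Definition \ref{defn:polytransverse} in turn. If some vertex $v$ satisfied $F(v) = t$, then the $0$-cell $\{v\}$ would be a cell containing $v \in F^{-1}(\{t\})$ on which $F$ is (trivially) constant, violating the hypothesis; this handles clause (ii). For clause (i) on a cell $C$ of dimension $\geq 1$, transversality is vacuous whenever $F^{-1}(\{t\}) \cap \mbox{int}(C) = \emptyset$, so I would only need to consider a point $p \in F^{-1}(\{t\}) \cap \mbox{int}(C)$; the hypothesis then forces $F|_C$ to be non-constant, and by the preliminary observation $dF_q : T_q\mbox{int}(C) \to \mathbb{R}$ is surjective for every $q \in \mbox{int}(C)$.

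I do not anticipate any real obstacle: the lemma is a straightforward definitional reformulation once the affine-linear dichotomy is spelled out. The one point requiring care is the correspondence between the vertex clause of Definition \ref{defn:polytransverse} and the $F$-nonconstant cellular neighborhood hypothesis — specifically, that a $0$-cell of $M$ contains $p$ precisely when $p$ is a vertex, so the trivial constancy of $F$ on any such $0$-cell is exactly what rules out $F(v) = t$ on both sides of the equivalence.
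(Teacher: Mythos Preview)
Your proposal is correct and follows essentially the same definitional-unpacking approach as the paper: both reduce the transversality condition $dF_p(T_pC) + \{0\} = \mathbb{R}$ to the statement that $F|_C$ is nonconstant, using the affine-linear dichotomy. Your write-up is in fact more careful than the paper's in handling the case where $p$ lies on the boundary of a cell $C$ rather than in $\mbox{int}(C)$---you correctly argue that constancy of $F|_C$ forces interior points of $C$ into $F^{-1}(\{t\})$, which is where the transversality hypothesis actually bites.
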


\begin{proof} 
The threshold $t$ is transversal for $F$ and $M$ if and only if for any $k$-cell $C \in X$ with $k \geq 1$ the restriction of $f$ to $\textup{int}(C)$ is transverse to $\{t\}$, and the restriction of $f$ to any $0$-cell $X$ is transverse to $\{t\}$. This is equivalent to the statement that if $p \in F^{-1}(\{t\})$ and $p$ is in a cell $C \in M$, then 
\[df_p(T_pC) + T_{f(p)}\{t\} = T_{f(p)}\mathbb{R}.\]
  Since $ T_{f(p)}\{t\} = \{0\}$, this equality holds if and only if $df_p(T_pC) = T_{f(p)} \mathbb{R} \cong \mathbb{R}$ for every cell $C$ containing $p$. By Corollary \ref{cor:codimtrans}, this is equivalent to $p$ having a nonconstant cellular neighborhood, as desired. 
\end{proof}

\begin{lemma} \label{lem:genthreshconvpoly} 
Let $t \in \mathbb{R}$ be a transversal threshold  for $F$ and $M$. Then for every cell $C \in M$, $F^{-1}(\{t\}) 
\cap C$ is either empty or
$\textrm{aff}(F^{-1}(\{t\}) \cap C)$ is a hyperplane in $\textrm{aff}(C)$.
 Moreover, whenever $F^{-1}(\{t\}) \cap C$ is nonempty, the hyperplane $\mbox{aff}(F^{-1}(\{t\}) \cap C)$ 
 cuts $C$. 
\end{lemma}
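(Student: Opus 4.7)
The plan is as follows. Fix a cell $C \in M$ with $F^{-1}(\{t\}) \cap C \neq \emptyset$ and choose $p \in F^{-1}(\{t\}) \cap C$. By Lemma~\ref{lem:nonconscellneighborhood}, $p$ has an $F$-nonconstant cellular neighborhood in $M$, so in particular $F|_C$ is not constant. Since $F|_C$ is affine linear, it extends uniquely to a nonconstant affine linear function $\tilde{F}$ on $\textrm{aff}(C)$, and its level set $H := \tilde{F}^{-1}(\{t\})$ is a hyperplane in $\textrm{aff}(C)$. Because $\tilde{F}$ and $F$ agree on $C$, we have $F^{-1}(\{t\}) \cap C = H \cap C$, so it will suffice to show that $H$ cuts $C$ and that $\textrm{aff}(H \cap C) = H$.

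The central step is to rule out the degenerate possibility that $H$ merely supports $C$. Let $F_{\min}$ and $F_{\max}$ be the min and max of $F$ on the compact-or-unbounded polyhedral set $C$ (if they exist; more precisely, look at the faces of $C$ where $F$ is extremized), attained on proper faces $C_{\min}$ and $C_{\max}$ of $C$; these faces are proper because $F|_C$ is nonconstant, and they are themselves cells of $M$ since $M$ is a polyhedral complex. Now suppose for contradiction that $t = F_{\min}$. Then every point $q \in C_{\min}$ satisfies $F(q) = t$, yet $F$ is constant on the cell $C_{\min}$ containing $q$ — violating the $F$-nonconstant cellular neighborhood condition at $q$ granted by transversality. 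The case $t = F_{\max}$ is ruled out identically. Hence $F_{\min} < t < F_{\max}$, which means $C$ contains points in both $H^+$ and $H^-$, so $H$ cuts $C$ in the sense of Definition~\ref{defn:supportcut}.

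Finally, to identify $\textrm{aff}(H \cap C)$ with $H$, I would produce a point $p^* \in H$ lying in the relative interior $\textrm{relint}(C)$ of $C$ in $\textrm{aff}(C)$. Since the faces $C_{\min}$ and $C_{\max}$ are proper, $F$ on $\textrm{relint}(C)$ takes values strictly in $(F_{\min}, F_{\max})$, and in particular there exist $x, y \in \textrm{relint}(C)$ with $F(x) < t < F(y)$. By convexity of $\textrm{relint}(C)$, the segment from $x$ to $y$ lies in $\textrm{relint}(C)$, and by the intermediate value theorem applied to the affine function $\tilde{F}$ restricted to this segment, there is a point $p^* \in \textrm{relint}(C) \cap H$. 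A small open ball around $p^*$ in $\textrm{aff}(C)$ is contained in $C$, so its intersection with $H$ is an open $(k-1)$-dimensional disk contained in $H \cap C$; this forces $\textrm{aff}(H \cap C) = H$, completing the proof. The main obstacle is Step~2, where one must invoke transversality through the characterization of Lemma~\ref{lem:nonconscellneighborhood} applied not to $p$ but to any point on the extreme faces $C_{\min}, C_{\max}$; the rest is convex geometry.
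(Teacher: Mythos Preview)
Your argument is correct and shares its core idea with the paper's proof: both rule out the possibility that $H$ supports $C$ by observing that $H\cap C$ would then be a proper face of $C$ (a cell of $M$) on which $F$ is constant with value $t$, contradicting the $F$-nonconstant cellular neighborhood characterization of transversality in Lemma~\ref{lem:nonconscellneighborhood}. Your phrasing via the extremal faces $C_{\min},C_{\max}$ is just a concrete instantiation of this same contradiction.

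Where you genuinely diverge from the paper is in establishing that $\mathrm{aff}(F^{-1}(\{t\})\cap C)$ is a hyperplane in $\mathrm{aff}(C)$. The paper simply invokes the Map Transversality Theorem (Theorem~\ref{thm:Transmaps}) to conclude that $F^{-1}(\{t\})\cap C$ has codimension~$1$, then notes that affine linearity of $F$ forces the affine hull to be a hyperplane. You instead define $H=\tilde F^{-1}(\{t\})$ directly as a level set of the affine extension and then do extra work to show $\mathrm{aff}(H\cap C)=H$ by producing a point of $H$ in $\mathrm{relint}(C)$. Your route is more elementary and self-contained---it never leaves convex geometry---whereas the paper's is shorter but leans on the smooth transversality machinery set up earlier. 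One small point to tidy: your discussion of $F_{\min},F_{\max}$ should be streamlined for unbounded $C$; once you know $H$ cuts $C$, density of $\mathrm{relint}(C)$ in $C$ and continuity of $\tilde F$ immediately give points of $\mathrm{relint}(C)$ on both sides of $H$, so you need not separately track whether the extrema exist.
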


\begin{proof}
The statement that $F^{-1}(\{t\}) \cap C$ is a submanifold of codimension $1$ in $C$ is from the Map Transversality Theorem (Theorem \ref{thm:Transmaps}); its affine hull is a hyperplane because $F$ is affine-linear.  Let $H = \textup{aff}(F^{-1}(\{t\}) \cap C) \neq \emptyset$.    If $H$ were a supporting hyperplane of $C$, then $H \cap C$ would be a non-empty lower-dimensional face of $C$, all of whose points map to $t$. Applying Lemma \ref{lem:nonconscellneighborhood}, this would contradict the assumption that $t$ is a transversal threshold. Hence, $H$ cuts $C$ whenever $F^{-1}(\{t\}) \cap C \neq \emptyset$.
\end{proof}

\begin{lemma} \label{lem:transversalthreshold}  All but finitely many thresholds $t \in \mathbb{R}$ are transversal for $F$ and $M$. 
\end{lemma}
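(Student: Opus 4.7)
My plan is to reduce this to a counting argument via the cell-level characterization of transversality in Lemma \ref{lem:nonconscellneighborhood}. The key observation is that, because $F$ is affine linear on each cell, ``non-transversal'' thresholds correspond precisely to levels on which $F$ is \emph{constant} on some cell, and there are only finitely many such levels because $M$ is a finite polyhedral complex.

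\medskip

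\noindent\textbf{Step 1: Characterize the non-transversal thresholds.} First I would show that a threshold $t$ fails to be transversal for $F$ and $M$ if and only if there exists a cell $C \in M$ with $F|_C \equiv t$. By Lemma \ref{lem:nonconscellneighborhood}, $t$ is not transversal iff some $p \in F^{-1}(\{t\})$ fails to have an $F$-nonconstant cellular neighborhood, i.e.\ there exists a cell $C \in M$ with $p \in C$ such that $F|_C$ is constant. Since $p \in C$ and $F(p) = t$, this forces $F|_C \equiv t$. Conversely, any cell $C$ on which $F$ is identically $t$ contains points (including $C$ itself as a cell containing them) witnessing the failure of a nonconstant cellular neighborhood at the level $t$. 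Note this correctly accounts for vertices, which are $0$-cells on which $F$ is automatically constant.

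\medskip

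\noindent\textbf{Step 2: Use finiteness of $M$.} Define
\[
T := \bigl\{\, F(C) \ \big|\ C \in M \text{ and } F|_C \text{ is constant}\, \bigr\} \subseteq \mathbb{R}.
\]
By Step 1, the set of non-transversal thresholds is exactly $T$. Since $M$ is a finite polyhedral complex (by the finiteness convention in Definition \ref{defn:convpolycpx} and the remark following it), the number of cells of $M$ is finite, and so $T$ is finite. Thus all but finitely many $t \in \mathbb{R}$ are transversal for $F$ and $M$.

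\medskip

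\noindent\textbf{Main obstacle.} There is essentially no obstacle; the only subtlety is verifying that the cellular characterization of transversality handles both vertices (which are trivially ``constant cells'' and thus correctly excluded whenever $t$ equals a vertex value) and higher-dimensional cells on which the affine linear map $F$ happens to be constant. Both cases are uniformly captured by the condition $F|_C \equiv t$, and finiteness of $M$ closes the argument immediately.
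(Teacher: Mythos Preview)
Your proposal is correct and follows essentially the same approach as the paper: both invoke Lemma~\ref{lem:nonconscellneighborhood} to identify the non-transversal thresholds with the images of the (finitely many) cells on which $F$ is constant, and then conclude by the finiteness of $M$. Your write-up is simply a more explicit version of the paper's argument.
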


\begin{proof} The polyhedral complex $M$ is, by definition, finite.  Hence there are only finitely many cells on which $F$ is constant. But Lemma \ref{lem:nonconscellneighborhood} tells us that the images of the constant cells are the only nontransversal thresholds for $F$ and $M$. 
\end{proof}

\section{Bent hyperplane arrangements and canonical polyhedral complexes} \label{sec:BHA}

The following notion was introduced in \cite{HaninRolnick}.

\begin{definition}\cite[Eqn. (2), Lem. 4]{HaninRolnick} \label{defn:benthyperarr} Let 
  \[\mathbb{R}^{n_0}  \xrightarrow{F_1 = \sigma \circ A_1} \mathbb{R}^{n_1}  \xrightarrow{F_2 = \sigma \circ A_2}
   \ldots \xrightarrow{F_m = \sigma \circ A_{m}} \mathbb{R}^{n_m}  \xrightarrow{G = A_{m+1}} \mathbb{R}^1\]
\noindent be a ReLU neural network and let $\mathcal{A}^{(k)} = \left\{H_1^{(k)}, \ldots, H_{n_{i_k}}^{(k)}\right\}$ denote the hyperplane arrangement in $\mathbb{R}^{n_{k-1}}$ associated to the layer map $F_k$.  A {\em bent hyperplane associated to the $k^{\textrm{th}}$ layer of $F$}, for $k \in  \{2,\ldots,m\}$,  is the preimage in $\mathbb{R}^{n_0}$ 
of any hyperplane $H_i^{(k)} \subseteq \mathbb{R}^{n_{k-1}}$ associated to the $k^{\textrm{th}}$ layer map: \[(F_{k-1} \circ \ldots \circ F_1)^{-1}\left(H_i^{(k)}\right).\]  Although they do not bend, for the sake of consistency we will refer to the hyperplanes $H_i^{(1)}$  in $\mathcal{A}^{(1)} \subseteq \mathbb{R}^{n_0}$ as the {\em bent hyperplanes associated to the $1^{\textrm{st}}$ layer of $F$}. 
\end{definition}

We will denote by $\mathcal{B}_F^{(k)}$ the union of the bent hyperplanes associated to the $k^{\textrm{th}}$ layer of $F$ and refer to this union as the {\em bent hyperplane arrangement associated to the $k^{\textrm{th}}$ layer of $F$}.

We will denote by $\mathcal{B}_F := \bigcup_{k=1}^{m} \mathcal{B}_F^{(k)}$ the union of all bent hyperplanes from all $m$ layers of $F$ and refer to this union as the {\em bent hyperplane arrangement} associated to $F$.

\begin{remark} \label{rem:noGinBHA}
Since there is no activation function on the final layer map to induce any additional loci of non-differentiability, we do not include its associated bent hyperplane in $\mathcal{B}_F$.
\end{remark}

It is immediate that $F$ is smooth on the complement of $\mathcal{B}_F$.

\begin{definition}  \label{defn:activationregion} \cite[Def 1, Lem 2]{HaninRolnick} Let $F: \mathbb{R}^{n_0} \rightarrow \mathbb{R}$  be a 
ReLU neural network map. An {\em activation region} of $F$ is a connected component of the complement of the bent hyperplane arrangement associated to $F$, i.e. a connected component of $\mathbb{R}^{n_0} \setminus \mathcal{B}_F$.
\end{definition}

\begin{remark}
Note that for $k \geq 2$ it is possible for $\mathcal{B}_F^{(k)}$ to have codimension $0$, not $1$, in $\mathbb{R}^{n_0}$.  As a simple example of this phenomenon, consider a two-layer ReLU neural network 
\[F: \xymatrix@1{\mathbb{R}^2 \ar[r]^{F_1} & \mathbb{R}^2 \ar[r]^{F_2} & \mathbb{R} \ar[r]^G & \mathbb{R}},\] where $\mathcal{A}^{(1)}$ is the standard coordinate hyperplane arrangement, and $\mathcal{A}^{(2)} = \left\{H^{(2)}\right\},$ where $H^{(2)}$ is any line through the origin with negative slope. Then \[\mathcal{B}_F^{(1)} = \{H_1^{st}, H_2^{st}\},\] the standard co-oriented coordinate axes, and \[\mathcal{B}_F^{(2)} = \{(x,y) \in \mathbb{R}^2 \,\,|\,\,x,y \leq 0,\}\]  the closed non-positive orthant. In particular, the bent hyperplane arrangement is codimension $0$, not $1$, and hence the closure of the activation regions (Definition \ref{defn:activationregion}) is a proper subset of $\mathbb{R}^n$. 

This phenomenon arises when a map fails to be transversal to a threshold, an observation that motivates Definition \ref{defn:GenericBHA} and Theorem \ref{thm:nminus1skel}. Note that it is also a measure zero phenomenon. See Theorem  \ref{thm:transversalgeneric}.
\end{remark}

 We now define a canonical polyhedral decomposition of the domain of a ReLU neural network.
   In the transversal case, we explicitly relate this decomposition to the bent hyperplane arrangements and activation regions in Theorems \ref{thm:BHAispolycpx} and \ref{thm:nminus1skel}.

\begin{definition} \label{def:canonicalpolyedraldecomp}
Let \[F: \xymatrix@1{\mathbb{R}^{n_0} \ar[r]^{F_1} & \mathbb{R}^{n_1} \ar[r]^{F_2} & \ldots \ar[r]^{F_m} & \mathbb{R}^{n_m} \ar[r]^G & \mathbb{R}}\] 
be a ReLU neural network. 
 For $i \in \{1,\ldots,m\}$, denote by $R^{(i)}$ the polyhedral complex in $\mathbb{R}^{n_{i-1}}$ induced  by the hyperplane arrangement associated to the $i^{\textrm{th}}$ layer map $F_i$.  Inductively define polyhedral complexes, $\mathcal{C}(F_i \circ \ldots  \circ F_1)$, in $\mathbb{R}^{n_0}$ as follows: 
Set 
\begin{itemize}
	\item $\mathcal{C}(F_1):=R^{(1)},$ and 
	\item $ \Ci:= \Cii_{\in R^{(i)}}$ for $i = 2, \ldots, m$.
\end{itemize}
  The \emph{canonical polyhedral complex} associated to $F$ is  $\mathcal{C}_F:= \mathcal{C}(F_m \circ \ldots \circ F_1).$
\end{definition}

\begin{theoremBHAispolycpx}
Let \[F: \xymatrix@1{\mathbb{R}^{n_0} \ar[r]^{F_1} & \mathbb{R}^{n_1} \ar[r]^{F_2} & \ldots \ar[r]^{F_m} & \mathbb{R}^{n_m} \ar[r]^G & \mathbb{R}}\] be a ReLU neural network. 
For each $i$, $\Ci$ is a polyhedral decomposition of $\mathbb{R}^{n_0}$ satisfying
\begin{enumerate}
	\item $F_i \circ \ldots \circ F_1$ is affine-linear on the cells of $\Ci$,
	\item  $\bigcup_{k=1}^{i} \mathcal{B}_F^{(k)}$  is the domain of  a polyhedral subcomplex of $\Ci$. 
\end{enumerate}
\end{theoremBHAispolycpx}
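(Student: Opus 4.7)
The plan is to proceed by induction on $i$, exploiting the level set complex construction from Definition \ref{def:canonicalpolyedraldecomp} together with Lemma \ref{lem:GrunertLevelSetComplex} and Lemma \ref{lem:cellularsubdivision}. The base case $i=1$ is essentially immediate: by Definition \ref{defn:HAPolyCpx}, $\mathcal{C}(F_1) = R^{(1)}$ is a polyhedral decomposition of $\mathbb{R}^{n_0}$, and on each top-dimensional cell every row $(W_j \mid b_j)\vec{x}'$ of $A_1$ has constant sign, so each coordinate of $\sigma \circ A_1$ is either identically zero or equals that row; hence $F_1$ is affine linear on cells. The set $\mathcal{B}_F^{(1)} = \bigcup_j H_j^{(1)}$ is the domain of the $(n_0{-}1)$-skeleton of $R^{(1)}$, which is clearly a subcomplex.

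For the inductive step, suppose $\mathcal{C}_{i-1} := \mathcal{C}(F_{i-1} \circ \ldots \circ F_1)$ is a polyhedral decomposition of $\mathbb{R}^{n_0}$ with $f := F_{i-1} \circ \ldots \circ F_1$ linear on cells and $\bigcup_{k=1}^{i-1}\mathcal{B}_F^{(k)}$ the domain of a subcomplex. Since $f$ is linear on cells, Lemma \ref{lem:GrunertLevelSetComplex} tells us $\mathcal{C}_i := (\mathcal{C}_{i-1})_{\in R^{(i)}}$ is a polyhedral complex, and Lemma \ref{lem:cellularsubdivision} tells us $\mathcal{C}_i$ is a subdivision of $\mathcal{C}_{i-1}$, so $|\mathcal{C}_i| = \mathbb{R}^{n_0}$ and $\mathcal{C}_i$ is a polyhedral decomposition. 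Every cell $C \in \mathcal{C}_i$ has the form $C = S \cap f^{-1}(Y)$ for some $S \in \mathcal{C}_{i-1}$ and some $Y \in R^{(i)}$, and by cellularity $f(C) \subseteq Y$, so on $Y$ the argument from the base case shows $\sigma \circ A_i$ is affine linear. Composing the affine linear restriction $f|_S$ with $\sigma \circ A_i|_Y$ shows $F_i \circ \ldots \circ F_1$ is affine linear on $C$, proving (1).

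For (2), note that the $(n_{i-1}{-}1)$-skeleton of $R^{(i)}$ is a subcomplex whose domain is $\bigcup_j H_j^{(i)}$, so by the cellularity of $f:\mathcal{C}_i \to R^{(i)}$ (Lemma \ref{lem:cellularsubdivision}) the union of those cells $C = S \cap f^{-1}(Y) \in \mathcal{C}_i$ with $Y \subseteq \bigcup_j H_j^{(i)}$ forms a subcomplex of $\mathcal{C}_i$ whose underlying set is exactly $f^{-1}(\bigcup_j H_j^{(i)}) = \mathcal{B}_F^{(i)}$. The inductive hypothesis gives us a subcomplex of $\mathcal{C}_{i-1}$ realizing $\bigcup_{k=1}^{i-1}\mathcal{B}_F^{(k)}$, and since $\mathcal{C}_i$ subdivides $\mathcal{C}_{i-1}$, that set is still realized as a subcomplex of $\mathcal{C}_i$; unioning the two subcomplexes gives the required subcomplex for $\bigcup_{k=1}^{i}\mathcal{B}_F^{(k)}$.

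The only step requiring genuine care is the verification that $F_i \circ \ldots \circ F_1$ is affine linear on cells of $\mathcal{C}_i$, as it requires simultaneously tracking a cell's containment in $\mathcal{C}_{i-1}$ (to use the inductive linearity) and in a region of $R^{(i)}$ (to guarantee that $\sigma$ acts coordinate-wise as either the identity or the zero map on $f(C)$). Everything else is a bookkeeping exercise with level set complexes, and no real geometric difficulty remains once the cellularity of $f:\mathcal{C}_i \to R^{(i)}$ is invoked.
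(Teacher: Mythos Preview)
Your proof is correct and follows essentially the same route as the paper's: induction on $i$, with Lemma~\ref{lem:GrunertLevelSetComplex} supplying the polyhedral complex structure of $\Ci$, the composition argument giving linearity on cells, and the observation that the preimage of the $(n_{i-1}-1)$--skeleton of $R^{(i)}$ under the cellular map realizes $\mathcal{B}_F^{(i)}$ as a subcomplex, to be unioned with the inductively obtained subcomplex for the earlier layers (which survives subdivision). Your write-up is in fact slightly more explicit than the paper's in invoking Lemma~\ref{lem:cellularsubdivision} for the subdivision and cellularity properties.
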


\begin{proof}
For each $i = 1,\ldots,m$, denote by $\mathcal{A}^{(i)}$ the hyperplane arrangement 
associated to the layer map $F_i: \mathbb{R}^{n_{i-1}} \rightarrow \mathbb{R}^{n_i}$ 
 and denote by $R^{(i)}$ the induced polyhedral decomposition of $\mathbb{R}^{n_{i-1}}$.  We proceed by induction on $i$.  
 
For $i=1$, it is immediate that $\mathcal{B}_F^{(1)} = {\mathcal{A}}^{(1)}$ forms the $(n_0-1)$--skeleton of $R^{(1)}$ and $F_1$ is affine-linear on cells of $\mathcal{C}(F_1) = R^{(1)}$.

Now consider $i>1$ and assume the statement holds for $i-1$.   Since level set complexes are polyhedral complexes, condition (i) of the inductive hypothesis implies $\Ci$ is a polyhedral complex.

By condition (i) of the inductive hypothesis, each cell in $\Ci$ 
is the intersection of a cell in $\Cii$ with the preimage of a cell in $R^{(i)}$.  The map $F_{i-1} \circ \ldots \circ F_1$ is affine-linear on each such intersection by assumption. The layer map $F_i:\mathbb{R}^{n_{i-1}} \to \mathbb{R}^{n_i}$ is affine-linear on cells of $R^{(i)}$.  Condition (i) follows.  

By condition (ii) of the inductive hypothesis, $\bigcup_{k=1}^{i-1} \mathcal{B}_F^{(k)}$ is the domain of a polyhedral subcomplex of $\Cii$. By definition, $\Ci$ is a subdivision of $\Cii$, so $\bigcup_{k=1}^{i-1} \mathcal{B}_F^{(k)}$ is the domain of a polyhedral subcomplex of $\Ci$.  Let $R^{(i)}_{(n_{i-1}-1)}$ denote the $(n_{i-1}-1)$--skeleton of $R^{(i)}$. Noting that the domain of $R^{(i)}_{(n_{i-1}-1)}$  is the union of the hyperplanes in $\mathcal{A}^{(i)}$, we have
 \[\left|\Cii\right|_{\in R^{(i)}_{(n_{i-1} - 1)}} = \mathcal{B}_F^{(i)}.\] 
  Since the union of two subcomplexes of a polyhedral complex is a subcomplex, $\bigcup_{k=1}^{i} \mathcal{B}_F^{(k)}$ is a polyhedral subcomplex of $\Ci$, implying condition (ii).

\end{proof}

The following definition is important for the constructions in Section \ref{sec:TransNN}:

\begin{definition} \label{def:NeuralNetworkTransversalThreshold}
A threshold $t \in \mathbb{R}$ is a \emph{transversal threshold} for a neural network $F: \mathbb{R}^{n_0} \rightarrow \mathbb{R}$ 
if $t$ is a transversal threshold for $F$ and its canonical polyhedral complex $\mathcal{C}(F)$.
\end{definition}

\section{Piecewise smoothness of the parametrized family of neural networks} \label{sec:PWSmooth}

Throughout this section, consider any fixed architecture $\arch$ 
%and write $D =D\arch$.  
and let  $\mathcal{F}: \mathbb{R}^{n_0} \times \mathbb{R}^D \to \mathbb{R}$ be the parametrized family of neural networks of this architecture (Definition \ref{ParameterizedNN}).  

\begin{lemma}  \label{l:piecewisesmoothfuncts}
There exists a finite set $E$ of polynomials in the variables $x_1,\ldots,x_{n_0}, s_1, \ldots s_D$ such that the following hold:
\begin{enumerate}
\item If $T$ is a term of a polynomial in $E$, then $T$ has the form 
$x s_1^{\tau_1} \ldots s_D^{\tau_D}$
for some $x \in \{x_1,\ldots,x_{n_0},1\}$ and $(\tau_1, \ldots, \tau_{D}) \in \{0,1\}^{D}$.
\item  $\mathcal{F}$ is smooth on the complement of the set $Z$ defined by
\[ Z := \{(x,s) \in \mathbb{R}^{n_0} \times \mathbb{R}^{D}  : f_i \left((x,s)\right)=0 \textrm{ for some } f_i \in E\}. \]
\end{enumerate}
\end{lemma}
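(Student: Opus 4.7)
The plan is to identify the non-smooth locus of $\mathcal{F}$ with the vanishing locus of the pre-activations of the hidden-layer neurons, and then to replace each pre-activation by a polynomial by fixing an activation pattern. For $k \in \{1,\ldots,m\}$ and $i \in \{1,\ldots,n_k\}$, I would define the $i^{\textrm{th}}$ pre-activation at the $k^{\textrm{th}}$ layer by
\[ P_{k,i}(x,s) := \bigl(A_k \circ \sigma \circ A_{k-1} \circ \sigma \circ \ldots \circ \sigma \circ A_1(x)\bigr)_i, \]
where the affine maps $A_j$ depend on $s$. Each $P_{k,i}$ is jointly continuous in $(x,s)$, so if all $P_{k,i}(x_0,s_0)$ are nonzero there is an open neighborhood $U$ of $(x_0,s_0)$ on which every $P_{k,i}$ keeps a constant sign; on $U$, each application of $\sigma$ then acts as either the identity or the zero map, so $\mathcal{F}\vert_U$ is a composition of affine maps whose coefficients are polynomials in $s$, hence smooth. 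Contrapositively, any non-smoothness of $\mathcal{F}$ at $(x_0,s_0)$ forces some $P_{k,i}(x_0,s_0)=0$.

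Next, I would make the pre-activations into polynomials by choosing, for each $k$, a \emph{layer-$k$ activation pattern}: a map $\pi$ assigning a value in $\{0,1\}$ to every pair $(k',j)$ with $1 \le k' < k$ and $1 \le j \le n_{k'}$. For each such $\pi$, define $\widetilde{P}_{k,i}^\pi(x,s)$ by running the same composition that defines $P_{k,i}$, but replacing each application $\sigma(P_{k',j}(x,s))$ by $P_{k',j}(x,s)$ when $\pi(k',j)=1$ and by $0$ when $\pi(k',j)=0$. Because $\sigma(0)=0$, at any point $(x_0,s_0)$ one can choose $\pi$ so that $\pi(k',j)=1$ whenever $P_{k',j}(x_0,s_0)>0$ (with arbitrary choices at zeros), and for this $\pi$ we have $\widetilde{P}_{k,i}^\pi(x_0,s_0)=P_{k,i}(x_0,s_0)$. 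Hence $\{P_{k,i}=0\}\subseteq\bigcup_\pi\{\widetilde{P}_{k,i}^\pi=0\}$, and the finite collection
\[ E := \bigl\{\widetilde{P}_{k,i}^\pi \,:\, 1 \le k \le m,\ 1 \le i \le n_k,\ \pi \text{ a layer-}k \text{ activation pattern}\bigr\} \]
yields a set $Z_F$ containing the non-smooth locus of $\mathcal{F}$, establishing condition (ii).

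Finally, I would verify condition (i) by induction on $k$. The base case $\widetilde{P}_{1,i}^\pi(x,s)=\sum_j w_{ij}^{(1)}x_j + b_i^{(1)}$ is immediate: each term is $x_j$ or $1$ times a single first-layer parameter. For the inductive step, $\widetilde{P}_{k,i}^\pi = \sum_{j:\,\pi(k-1,j)=1} w_{ij}^{(k)}\,\widetilde{P}_{k-1,j}^\pi + b_i^{(k)}$; since $w_{ij}^{(k)}$ is a layer-$k$ parameter while every term of $\widetilde{P}_{k-1,j}^\pi$ is, by the inductive hypothesis, a product of at most one input coordinate with \emph{distinct} parameters drawn from layers $1,\ldots,k-1$, multiplication by $w_{ij}^{(k)}$ preserves the property that every parameter appears with exponent in $\{0,1\}$. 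The main obstacle is the bookkeeping: one must be careful that no parameter is ``repeated'' along a single path through the network, which holds because weight and bias entries from distinct positions in the $A_j$'s correspond to disjoint coordinates of $s$.
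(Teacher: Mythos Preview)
Your proposal is correct and takes essentially the same approach as the paper: your set $E$ of polynomial pre-activations $\widetilde{P}_{k,i}^\pi$ ranging over all activation patterns $\pi$ is exactly what the paper describes as ``all possible expressions that are inputs of a ReLU, allowing for the possibility that each nested ReLU could be either $0$ or the identity.'' In fact your argument is more complete than the paper's, which presents only an illustrative example for architecture $(1,2,1)$ in lieu of a formal proof and leaves the inductive verification of condition~(i) implicit.
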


\begin{proof}
The idea is to let $E$ be the set of all possible ``inputs'' of any ReLU in the expression defining $\mathcal{F}$.  Rather than presenting a formal proof, we give an illustrative example that demonstrates all the key ideas.

We consider the network architecture $(1,2,1;1)$.  The associated parametrized family is the map $\mathcal{F}: \mathbb{R} \times \mathbb{R}^9 \to \mathbb{R}$ given by 
  \[ (x,(a,b,c,d,e,f,g, h, i)) \mapsto  h  \cdot \textup{ReLU}( e \cdot \textup{ReLU}(ax+b) + f \cdot \textup{ReLU}(cx+d) + g) + i \]
 Each of the three ReLU's in this expression acts as either the identity or $0$, depending on the sign of its argument.  
 Let $E$ be the set of all possible expressions that are inputs of a ReLU in the expression for $\mathcal{F}$, allowing for the possibility that each nested ReLU could be either $0$ or the identity.  That is, 
 \[E = \{ax+b, cx+d, e(ax+b) + f (cx+d) + g, e(ax+b) + g, f (cx+d) + g, g   \}. \]
   Let $Z \subset  \mathbb{R} \times \mathbb{R}^9$ be the set of points $(x,s)$ where at least one function in $E$ evaluates to $0$.

For any fixed input $(x,s)$, $\mathcal{F}((x,s))$ is given by one of $2^3$ possible (not necessarily distinct) formulas (which correspond to each of the $3$ ReLU's being in one of two possible ``states'').
 Let $H$ be the set of $2^3$ (not necessarily distinct) functions formed by replacing each ReLU with either $0$ or the identity.  
Which of these expressions represents $\mathcal{F}$ locally only changes at points where the argument of a ReLU in the expression for $\mathcal{F}$ -- i.e. a polynomial in $E$ -- changes sign. \noindent  Now, since all the functions in $E$ are continuous and $Z$ is their set of zeros, for any point $(x,s) \in (\mathbb{R} \times \mathbb{R}^9) \setminus Z$  there exists a neighborhood $U$  of $(x,s)$ on which the sign of each function in $E$ is constant.  Consequently, there is a fixed function $f \in H$ such that $\mathcal{F}$ agrees with $f$ on $U$.  Since all functions in $H$ are smooth, it follows that the restriction of  $\mathcal{F}$  to $(\mathbb{R} \times \mathbb{R}^9) \setminus Z$ is smooth.

  \end{proof}
%%%%%%%%%%%%%

\begin{theoremParametrizedPiecewiseSmooth}
There exists an algebraic set $Z \subset \mathbb{R}^{n_0} \times \mathbb{R}^{D}$ such that 
\begin{enumerate}
\item $\mathcal{F}$  \label{i:1}
%the parametrized family of neural networks of architecture $(n_0, \ldots, n_m \vert 1)$, 
%  \[\mathcal{F} : \mathbb{R}^{n_0} \times \mathbb{R}^{D(n_0,\dots,n_m,1)} \rightarrow \mathbb{R},\]
  is smooth on the complement of $Z$,
\item \label{i:2} $Z$ is the vanishing set of a polynomial, and hence is a closed, nowhere dense subset with Lebesgue measure $0$, and 
\item \label{i:3} the complement of $Z$ consists of finitely many connected components.  
\end{enumerate}
\end{theoremParametrizedPiecewiseSmooth}

\begin{proof}
Let $E$ be the set of polynomials constructed in Lemma  \ref{l:piecewisesmoothfuncts}, define the polynomial $F := \prod_{f_i \in E} f_i$, and observe that the set $Z \subset \mathbb{R}^{n_0 + D}$ from Lemma \ref{l:piecewisesmoothfuncts} is the vanishing set for the polynomial $F$.  Items \eqref{i:1} and \eqref{i:2} follow immediately.  
To see (iii),  consider the polynomial $G: \mathbb{R}^{D+n_0} \times \mathbb{R} \to \mathbb{R}$  defined by $G(x,z) = z F(x) - 1$, and let $Z_G \subset \mathbb{R}^{D+n_0 }\times \mathbb{R}^1$ be the vanishing set for $G$. 
As the vanishing set of a real polynomial, $Z_G$ is a real algebraic variety; hence $Z_G$ has finitely many connected components (by e.g.
 \cite[Theorem 3]{whitney1992elementary} or \cite[Lemma 2.5]{Warren}).  
Note that $G(x,z) = 0$ if and only if
 % $zF(x) = 1$ if and only if 
$F(x) \neq 0$, $z = 1/F(x)$.  Hence $Z_G$ is homeomorphic (via $(x,z) \mapsto x$) to 
 $\mathbb{R}^{D+n_0} \setminus Z$; consequently  $\mathbb{R}^{D+n_0} \setminus Z$ also has finitely many connected components.  
%Hence $Z_G$, and also $\mathbb{R}^{D+n_0} \setminus Z$, consists of finitely many connected components.  

%Standard results in real algebraic geometry imply that $Z$, as a codimension one algebraic set, is nowhere dense, has Lebesgue measure $0$, and its complement consists of finitely many connected components (see e.g. \cite{RealAlgebraicGeometry, Warren}). 
\end{proof}

\begin{proposition} \label{prop:benthyperplanesINZF}
 Let $Z$ be the algebraic set constructed in the proof of Lemma~\ref{l:piecewisesmoothfuncts}.
 % for the parametrized family of neural networks
 %$\mathcal{F} : \mathbb{R}^{n_0} \times \mathbb{R}^{D(n_0,\dots,n_m,1)} \rightarrow \mathbb{R},$ and
 For each $s \in  \mathbb{R}^{D}$, denote by $\mathcal{F}_s$ the neural network map defined by $\mathcal{F}_s(x) = \mathcal{F}(x,s)$.  
 Then $\bigcup_{(x,s) \in Z} \left(\mathcal{B}_{\mathcal{F}_s} \times \{s\} \right) \subseteq Z.$
\end{proposition}

\begin{proof} 
Fix a parameter $s \in  \mathbb{R}^{D}$.  The bent hyperplane arrangement $\mathcal{B}_{\mathcal{F}_s}$ is the union of the preimages in $\mathbb{R}^{n_0}$ of the hyperplanes in the hyperplane arrangements associate to the layer maps of $\mathcal{F}_s$.  Each such hyperplane is the vanishing set of the argument of a ReLU in the expression for $\mathcal{F}_s$.  Thus the polynomial 
%in the variables $x_1,\ldots,x_{D}$
 that defines such a hyperplane is obtained by substituting the values for $s$ into one of the polynomials in the set $E$ constructed in the proof of Lemma~\ref{l:piecewisesmoothfuncts}, implying $F(x,s) = 0$, where $F$ is the product of the polynomials in $E$.  Hence, any point $x$ in the preimage in $\mathbb{R}^{n_0}$ of such a hyperplane satisfies $(x,s) \in Z$.

\end{proof}

\section{Transversal neural networks} \label{sec:TransNN}

In what follows, let $\pi_j: \mathbb{R}^m \rightarrow \mathbb{R}$ denote the projection onto the $j^{\textrm{th}}$ coordinate.
For any neural network map $$\mathbb{R}^{n_0}  \xrightarrow{F_1 = \sigma \circ A_1} \mathbb{R}^{n_1}  \xrightarrow{F_2 = \sigma \circ A_2} 
\ldots \xrightarrow{F_m = \sigma \circ A_{m}} \mathbb{R}^{n_m}  \xrightarrow{G = A_{m+1}} \mathbb{R}^1,$$ $i \in \{1, \ldots, m\}$ and $j \in \{1, \ldots, n_{i}\}$,
the {\em node map}, $F_{i,j}$, is the map  \[\pi_j \circ A_i \circ F_{i-1} \circ \ldots \circ F_1 : \mathbb{R}^{n_0} \rightarrow \mathbb{R}.\]

%\begin{definition} Let  $$\mathbb{R}^{n_0}  \xrightarrow{F_1 = \sigma \circ A_1} \mathbb{R}^{n_1}  \xrightarrow{F_2 = \sigma \circ A_2} 
%\ldots \xrightarrow{F_m = \sigma \circ A_{m}} \mathbb{R}^{n_m}  \xrightarrow{G = A_{m+1}} \mathbb{R}^1$$ be a ReLU neural network. For each $i \in \{1, \ldots, m\}$ and $j \in \{1, \ldots, n_{i}\}$, the {\em node map}, $F_{i,j}$, is the map  \[\pi_j \circ A_i \circ F_{i-1} \circ \ldots \circ F_1 : \mathbb{R}^{n_0} \rightarrow \mathbb{R}.\]
%\end{definition}

\begin{definition} \label{defn:GenericBHA} A ReLU neural network \[F: \xymatrix@1{\mathbb{R}^{n_0} \ar[r]^{F_1} & \mathbb{R}^{n_1} \ar[r]^{F_2} & \ldots \ar[r]^{F_m} & \mathbb{R}^{n_m} \ar[r]^G & \mathbb{R}}\] is said to be {\em transversal} if,  
for each $i \in \{1, \ldots, m\}$ and each $j \in \{1, \ldots, n_i\}$, $t=0$ is a transversal threshold (Definition  \ref{def:NeuralNetworkTransversalThreshold}) for the node map \[F_{i,j}: \mathbb{R}^{n_0} \rightarrow \mathbb{R}.\]

\end{definition}

\begin{remark} \label{rmk:genvstrans}
The descriptors {\em generic} and {\em transversal}, when applied to ReLU neural networks, are similar but complementary concepts.% We devote a few words to explaining the difference.

A ReLU neural network is {\em generic} if each solution set arrangement for each layer map is generic. It is not immediate, yet it is true, that if a solution set arrangement is generic then each solution set in the arrangement intersects each {\em intersection} of solution sets in that layer transversely. 

In contrast, if a ReLU neural network is {\em transversal} then it follows from the definitions that each bent hyperplane intersects the bent hyperplanes from all previous layers transversely.

Put simply, when applied to ReLU neural networks, the term {\em generic} describes intersections of cells associated to a single layer map, and the term {\em transversal} describes intersections of cells associated to different layer maps.
\end{remark}

\begin{theoremnminus1skel}
 If a ReLU neural network  $F: \mathbb{R}^{n_0} \rightarrow \mathbb{R}$ %\[F: \xymatrix@1{\mathbb{R}^{n_0} \ar[r]^{F_1} & \mathbb{R}^{n_1} \ar[r]^{F_2} & \ldots \ar[r]^{F_m} & \mathbb{R}^{n_m} \ar[r]^G & \mathbb{R}}\] 
 is transversal,  then the bent hyperplane arrangement $\mathcal{B}_F$ is the domain of the $(n_0-1)$--skeleton of the canonical polyhedral complex $\mathcal{C}(F)$, and the closures of the activation regions of $F$ are the $n_0$--cells of $\mathcal{C}(F)$. 
\end{theoremnminus1skel}

\begin{proof} We proceed by induction on $m$, the number of hidden layers of $F$. The base case $m=1$ is immediate. 
Now consider any fixed value of $m > 1$ and assume the result holds for all smaller values of $m$.  In particular, for each node map $F_{m,j}$ %, $j \in \{1,\ldots,n_m\}$, 
the bent hyperplane arrangement 
$\mathcal{B}_{F_{m,j}}  = \bigcup_{i=1}^{m-1} \mathcal{B}_{F_{m,j}}^{(i)}$ is the domain of the $(n_0-1)$--skeleton of $\mathcal{C}(F_{m-1} \circ \ldots F_1)$. 
 But for each $i \in \{1,\ldots,m-1\}$ and $j \in \{1,\ldots,n_m\}$, we have 
$\mathcal{B}_{F_{m,j}}^{(i)} = \mathcal{B}_{F}^{(i)},$
so 
the bent hyperplane arrangement,
\[\mathcal{B}_F' := \bigcup_{i=1}^{m-1} \mathcal{B}_F^{(i)},\] for the first $m-1$ layers is the $(n_0-1)$--skeleton of the polyhedral complex $\Cmi$.  

To see that $\mathcal{B}_F$ is contained in the $(n_0-1)$--skeleton of $\Cm$, we begin by noting that $\mathcal{B}_F'$ is contained in the $(n_0-1)$--skeleton of $\Cm$ since $\Cm$ is a subdivision of $\Cmi$. 
 Moreover, by definition \[\mathcal{B}_F = \mathcal{B}_F'  \cup \bigcup_{j=1}^{n_m} F_{m,j}^{-1}(\{0\}).\]
  It therefore suffices to show that  $\bigcup_{j=1}^{n_m} F_{m,j}^{-1}(\{0\})$ is contained in the $(n_0-1)$--skeleton of $\Cm$.

But since $0$ is a transversal threshold for each $F_{m,j}:\mathbb{R}^{n_0} \to \mathbb{R}$, this follows from Corollary \ref{cor:codimtrans}. Explicitly, for every cell $C \in \Cmi$, Theorem \ref{thm:Transmaps} tells us that 
$C \cap F_{m,j}^{-1}(\{0\})$ is codimension $1$ in $C$.  Since $\Cmi$ has dimension $n_0$, it follows that any new cell in $\mathcal{B}_F \setminus \mathcal{B}_F'$ has dimension $\leq (n_0-1)$, as desired.

 To see that $\Cm$ is contained in $\mathcal{B}_F$ we will show that any $k$--cell $C$ in $\Cm_{n_0-1}$ is also in $\mathcal{B}_F$. 
Since $\Cm$ is, by definition, a subdivision of $\Cmi$, the cell $C$ is contained in a cell $C'$ of $\Cmi$. Assume WLOG that $C'$ has minimal dimension among all cells in $\Cmi$ containing $C$, and let $k'$ be the dimension of $C'$. If $k' \leq n_0-1$, then $C \subseteq C' \subseteq \Cmi_{n_0-1}$, and the inductive hypothesis tells us $C \subseteq \mathcal{B}_F'$, as desired.

So we may assume that $C'$ has dimension $n_0$. Therefore the construction described in the proof of Theorem \ref{thm:BHAispolycpx} and the fact that $C$ has dimension $\leq n_0-1$ (see also Lemma \ref{lem:genthreshconvpoly}) tells us that $C$ is equal to the intersection of $C'$ with $F_{m,j}^{-1}(\{0\})$ for some node $j$ in the $m$th layer map. It follows that $C \subseteq \mathcal{B}_F$. We conclude that $\mathcal{B}_F = \Cm_{n-1}$ as desired.
\end{proof}

\begin{theoremtransversalgeneric} For any given architecture $\arch$ %\[\xymatrix@1{\mathbb{R}^{n_0} \ar[r]^{F_1} & \mathbb{R}^{n_1} \ar[r]^{F_2} & \ldots \ar[r]^{F_m} & \mathbb{R}^{n_m} \ar[r]^G & \mathbb{R}}\] 
of feedforward ReLU neural network, almost every (with respect to Lebesgue measure on $\mathbb{R}^{D}$) choice of parameters yields a generic, transversal ReLU neural network.
\end{theoremtransversalgeneric}

\begin{proof} 
Lemma \ref{lem:genHAdense} tells us that almost every choice of parameters yields a generic ReLU neural network. It therefore suffices to prove that almost every choice of parameters yields a transversal ReLU neural network.
We proceed by induction on $m$, the number of hidden 
layers. 
In the base case $m = 0$, for any network map $F$ of this architecture, the unique node map $F_{1,1}$ is the affine-linear map $G:\mathbb{R}^{n_0} \to \mathbb{R}$, and every such map is  transversal.

%is an affine-linear map, so $F$ is transversal if and only if the node map $F_{1,j}$ has at least one nonzero value. That is, if and only if the associated weight, bias vector $(W_j|b_j)$ is nonzero for each $j$. The set of such parameters clearly forms a null set, as desired.

Now consider a fixed value of $m \geq 1$, 
and assume the result holds for smaller values of $m$.   
For any neural network $F$ of architecture $\arch$ and any $j \in \{1,\ldots,n_m\}$, the node map $F_{m,j}$ is a neural network of architecture $(n_0,\ldots,n_{m-1};1)$.  Note that such a neural network $F$ is transversal if and only if the node map $F_{m,1}$ 
is a transversal neural network and for every $j \in \{1,\ldots,n_m\}$ the node map $F_{m,j}$ has $t=0$ as a transversal threshold.  

For any $n \in \mathbb{N}$, let $\lambda_n$ denote Lebesgue measure on $\mathbb{R}^n$.  
By the inductive hypothesis, there exists a $\lambda_{D(n_0,\ldots,n_{m-1};1)}$-null set $N_1 \subset \mathbb{R}^{D(n_0,\ldots,n_{m-1};1)}$ such that for every parameter in $\mathbb{R}^{D(n_0,\ldots,n_{m-1};1)} \setminus N_1$, the associated neural network map (which we will use as the node map $F_{m,1}$) is transversal.  
 Note, furthermore, that transversality does not depend on the final affine-linear map to $\mathbb{R}^1$, so the set $N_1$ has a product structure in which each coordinate corresponding to this final affine map corresponds to a factor of $\mathbb{R}$.  For any parameter $s \in \mathbb{R}^{D(n_0,\ldots,n_{m-1};1)}$, denote by $\tilde{s}$ the projection of $s$ onto the coordinates of $s$ that determine the non-affine-linear layer of the associated network, i.e. $\tilde{s}$ is  the first $k:=
 D(n_0,\ldots,n_{m-1};1) - (n_{m-1}+1)$ coordinates of $s$.
 
 % Hence, after enlarging $N_1$ by adding a $\lambda_{D(n_0,\ldots,n_{m-1};1)}$-null set to it  (exactly as done below for the node map $F_{m,j}$ for arbitrary $j$), we may further assume that $0$ is a transversal threshold for every neural network map determined by a parameter in the complement of $N_1$.  

%Let $N_2$ denote the set of points in $\mathbb{R}^{D(n_0,\ldots,n_{m};1)}$ whose projection to the first $D(n_0,\ldots,n_{m-1};1)$-coordinates is in $N_1$; then $\lambda_{D(n_0,\ldots,n_{m};1)}(N_2) = 0$. 
 Set $\delta := D(n_0,\ldots,n_{m};1) - k$.  
We will show that for each parameter $s \in \mathbb{R}^{D(n_0,\ldots,n_{m-1};1)} \setminus N_1$, there exists a set $Y_s \subset \mathbb{R}^\delta$ such that $\lambda_{\delta}(Y_s) = 0$ and for every $w \in \mathbb{R}^\delta \setminus Y_s$, the neural network $F$ associated to the parameter $(\tilde{s},w) \in \mathbb{R}^{D\arch}$ is such  that for every $j \in \{1,\ldots,n_m\}$, $0$ is a transversal threshold of the node map $F_{m,j}$.  Assuming such sets $Y_s$ are defined, set 
$$N_2:= \left \{(\tilde{s},x) : s \in \mathbb{R}^{D(n_0,\ldots,n_{m-1};1)} \setminus N_1, x \in Y_s \right \}.$$
Tonelli's Theorem will then imply that $\lambda_{D(n_0,\ldots,n_{m};1)}(N_2) = 0$.  Then, for every parameter in $\mathbb{R}^{D\arch} \setminus N_2$, the associated neural network is transversal.

So fix $s \in \mathbb{R}^{D(n_0,\ldots,n_{m-1};1)} \setminus N_1$ and let $\Cmi$ be the associated canonical polyhedral complex. For each node index $j \in \{1,\ldots,n_m\}$,  consider the parametrized family 
\[(\mathcal{F}_{m,j})_{s}: \mathbb{R}^{n_0} \times \mathbb{R}^{n_{m-1} + 1} \rightarrow \mathbb{R}\] 
describing the action on $\mathbb{R}^{n_0}$ of neural networks of architecture $(n_0,\ldots,n_{m-1};1)$ whose non-affine-linear layer maps are parameterized by $\tilde{s}$ and whose final affine-linear map to $\mathbb{R}$ is parameterized by the point in $\mathbb{R}^{n_{m-1} + 1}$.
By construction, for any choice of $w \in \mathbb{R}^{n_{m-1} + 1}$, the function  $\left(\mathcal{F}_{m,j}\right)_s \vert_w :\mathbb{R}^{n_0} \to \mathbb{R}$ is 
is affine-linear on the cells of $\Cmi$.
Therefore, by Proposition \ref{prop:ParTransPolyCpx}, to prove that $\left(\mathcal{F}_{m,j}\right)_s \vert_w$ is transverse to $\{0\} \subset \mathbb{R}$ for $\lambda_{n_{m-1}+1}$-almost every $w \in \mathbb{R}^{n_{m-1} + 1}$, it suffices to show that for every cell $C \in \Cmi$,  the parametrized map 
$$(\mathcal{F}_{m,j})_s\vert_{C' \times \mathbb{R}^{n_{m-1}+1}} : C' \times \mathbb{R}^{n_{m-1}+1} \to \mathbb{R}$$
is transverse to $\{0\}$, where $C'=C$ if $\textrm{dim}(C) = 0$ and $C' = \textrm{int}(C)$ otherwise. 
For any such $C$, to show that $(\mathcal{F}_{m,j})_s \vert_{C' \times \mathbb{R}^{n_{m-1}+1}}$ is transverse to $ \{0\} \subseteq \mathbb{R}$, it suffices to show that $(\mathcal{F}_{m,j})_s \vert_{C' \times \mathbb{R}^{n_{m-1}+1}}$ is surjective, since the whole space, $\mathbb{R}$, is clearly transverse to any embedded submanifold. 
But this is clearly true, because for any non-empty cell, $C \in \mathcal{C}(F_{m-1} \circ \ldots \circ F_1)$, there exists a point $p \in F_{m-1} \circ \ldots \circ F_1(C') \subset \mathbb{R}^{n_{m-1}}$, and it is clear that there exists  {\em some} affine-linear transformation $\mathbb{R}^{n_{m-1}} \rightarrow \mathbb{R}$ sending $p$ to $t$.

%Accordingly, fix any non-empty cell, $C \in \mathcal{C}(F_{m-1} \circ \ldots \circ F_1)$.  Then we may pick a point $p \in F_{m-1} \circ \ldots \circ F_1(C') \subset \mathbb{R}^{n_{m-1}}$.  It is easy to see that for every $t \in \mathbb{R}$ there exists {\em some} affine-linear transformation $\mathbb{R}^{n_{m-1}} \rightarrow \mathbb{R}$ sending $p$ to $t$.  So $(\mathcal{F}_{m,j})_s \vert_{C' \times \mathbb{R}^{n_{m-1}+1}}$ is surjective.

%This is true because every affine-linear transformation $A$ sends $p$ to {\em some} value, $t_0$, and if $t_0 \neq t$ then the affine-linear transformation $A + (t-t_0)$ will send $p$ to $t$.  So $(\mathcal{F}_{m,j})_s \vert_{C' \times \mathbb{R}^{n_{m-1}+1}}$ is surjective.

Thus,  for $\lambda_{n_{m-1}+1}$-almost every $w \in \mathbb{R}^{n_{m-1} + 1}$, the node map $(\mathcal{F}_{m,j})_s \vert_w$ is transverse to $\{0\}$; let $Y_{s,j}$ be the  $\lambda_{n_{m-1}+1}$-null set where this fails. 
Set $Y_s$ to be the set of points $w \in \mathbb{R}^{\delta}$ such that for some $j \in \{1,\ldots,n_m\}$, the projection of $(\tilde{s},w)$ to its $(n_{m-1}+1)$ coordinates representing the affine-linear layer map of the node map $F_{m,j}$ is in 
$Y_{s,j}$.  Tonelli's Theorem implies $\lambda_{\delta}(Y_s) = 0$. 
\end{proof}

\section{Binary codings of regions of co-oriented hyperplane arrangements and the gradient vector field of a ReLU neural network map} \label{sec:Binary}

 In this section, we collect elementary facts about co-oriented hyperplane arrangements that will be useful in the proofs of Theorems \ref{thm:JohnsonResult} and \ref{thm:oneboundedcomponent}. We also introduce a partial orientation on $\mathcal{C}(F)_1$, the $1$--skeleton of the bent hyperplane arrangement of a generic, transversal ReLU neural network, defined using the gradient of the neural network function $F$. This partially-oriented graph plays an important role in the proof of Theorem \ref{thm:oneboundedcomponent}. 
\subsection{Regions, vertices, and edges of classical hyperplane arrangements}

\begin{definition} A {\em region} of a (possibly ordered, co-oriented) hyperplane arrangement $\mathcal{A}$ in $\mathbb{R}^n$ is a connected component of $\mathbb{R}^n \setminus \bigcup_{H \in \mathcal{A}} H$. Let $r(\mathcal{A})$ denote the number of regions of $\mathcal{A}$.
\end{definition}

Note that each region, $R$, of an ordered, co-oriented hyperplane arrangement ${\bf A} = \{{\bf H}_1, \ldots, {\bf H}_k\}$ is naturally labeled with a binary $k$--tuple, $\vec{\theta} \in \{0,1\}^k$, where the $i^{\textrm{th}}$ component of $\vec{\theta}$ associated to $R$ is $1$ (resp. $0$) if the co-orientation of ${\bf H}_i$ points towards (resp., away from) $R$. 
%\begin{definition} 
We shall denote the region of ${\bf A}$ labeled by the binary $k$--tuple $\vec{\theta}$ by  $R_{\vec{\theta}}({\bf A})$ and refer to it as the {\em $\vec{\theta}$ region of  ${\bf A}$.} If the ordered, co-oriented hyperplane arrangement ${\bf A}$ is clear from context, we will abbreviate the notation to $R_{\vec{\theta}}$.  We will use $\bar{R}_{\vec{\theta}}({\bf A})$ to denote the closure of $R_{\vec{\theta}}({\bf A})$.

The assignment of binary $k$--tuples to regions of an ordered, co-oriented hyperplane arrangement ${\bf A}$ is clearly injective, but it need not be surjective (see Figure \ref{fig:NotSurjective}). Lemma  \ref{lem:NumRegions} gives a sufficient condition for the assignment to be bijective.

\begin{figure}
 \centering
   \includegraphics[width=0.5\textwidth]{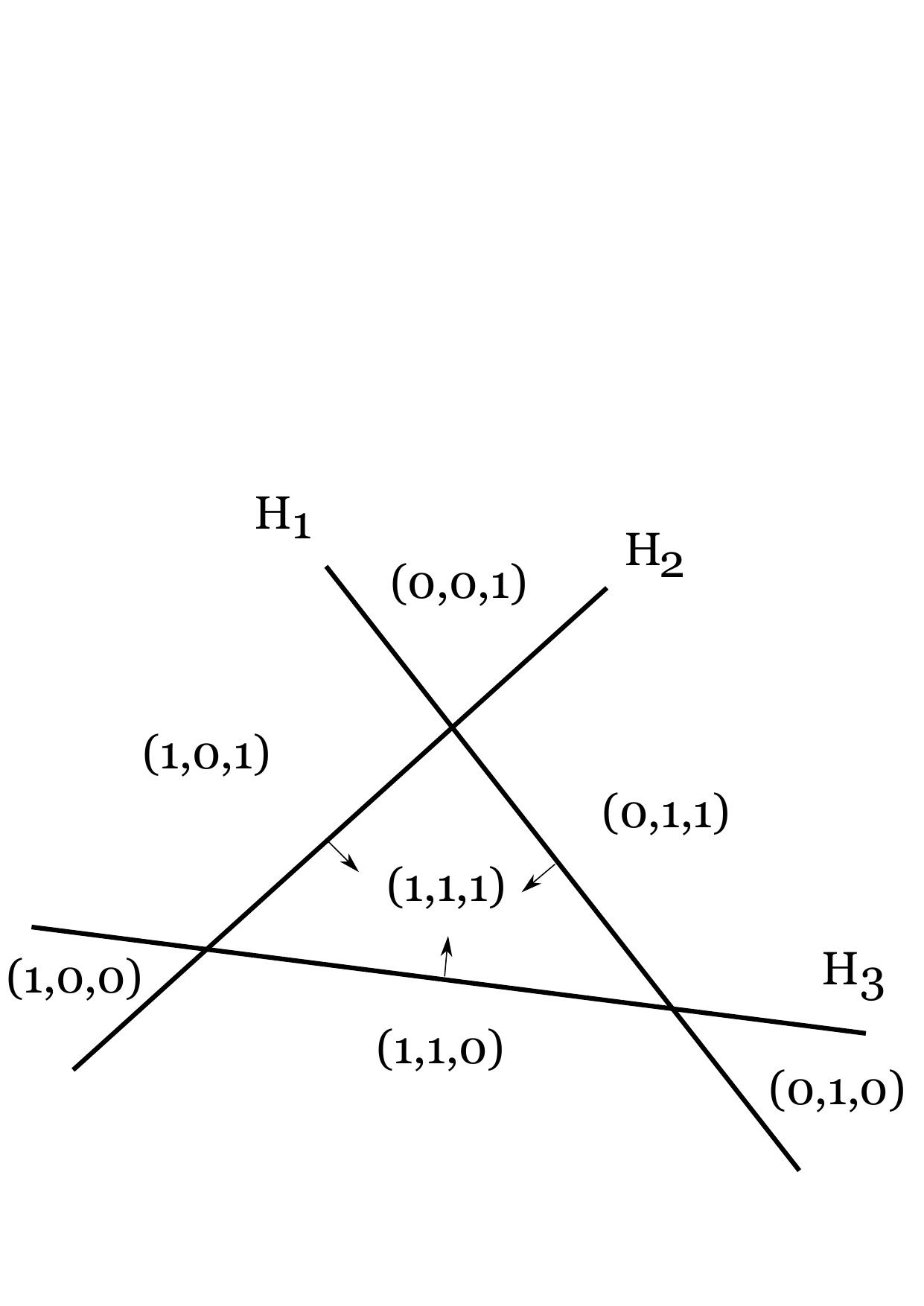}
      \caption{There are $7$ regions in the complement of $3$ co-oriented hyperplanes in $\mathbb{R}^2$. The assignment of binary $3$--tuples to regions is not surjective.}
	\label{fig:NotSurjective}
\end{figure}

The contents of Lemmas \ref{lem:NumRegions} and \ref{lem:klessthann} are well known and follow immediately from a classical theorem of Zaslavsky \cite{Zaslavsky} (cf. \cite[Thm. 2.5]{Stanley}).  

\begin{lemma} \label{lem:NumRegions} Let ${\bf A} := \{{\bf H}_1, \ldots, {\bf H}_k\}$ be a generic ordered, co-oriented hyperplane arrangement in $\mathbb{R}^n$, where $k \leq n$. Then $r({\bf A}) = 2^k$, and hence there is a one-to-one correspondence between regions of ${\bf A}$ and binary $k$--tuples.
\end{lemma}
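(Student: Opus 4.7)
My plan is to prove the lemma by induction on $k$, the number of hyperplanes. The key insight is that if $\mathbf{A}$ is generic with $k \leq n$, then the induced arrangement on any hyperplane $\mathbf{H}_k \in \mathbf{A}$ obtained by intersecting with the other hyperplanes is itself a generic arrangement of $k-1$ hyperplanes inside $\mathbf{H}_k \cong \mathbb{R}^{n-1}$; this lets the inductive hypothesis feed into itself.

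The base case $k=1$ is clear: a single co-oriented hyperplane divides $\mathbb{R}^n$ into exactly two regions, one labeled $0$ and one labeled $1$ by the co-orientation. For the inductive step, I would let $\mathbf{A}' := \{\mathbf{H}_1, \ldots, \mathbf{H}_{k-1}\}$ and observe that $\mathbf{A}'$ is still generic with $k-1 \leq n$, so by the inductive hypothesis $r(\mathbf{A}') = 2^{k-1}$ with regions in bijection with binary $(k-1)$-tuples $\vec{\theta}'$. Then to show $r(\mathbf{A}) = 2 r(\mathbf{A}')$, the main task is to show that $\mathbf{H}_k$ cuts every region $R_{\vec{\theta}'}(\mathbf{A}')$; equivalently, that $\mathbf{H}_k \cap \operatorname{int}(R_{\vec{\theta}'}(\mathbf{A}'))$ is nonempty for every $\vec{\theta}' \in \{0,1\}^{k-1}$.

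The main obstacle is establishing this nonemptiness uniformly over $\vec{\theta}'$, and this is where genericity is essential. I would consider the arrangement $\mathbf{A}'' := \{\mathbf{H}_k \cap \mathbf{H}_1, \ldots, \mathbf{H}_k \cap \mathbf{H}_{k-1}\}$ inside the affine space $\mathbf{H}_k \cong \mathbb{R}^{n-1}$, equipped with the co-orientations inherited from $\mathbf{A}'$. The genericity of $\mathbf{A}$ guarantees that $\mathbf{A}''$ is a generic co-oriented arrangement of $k-1$ hyperplanes in $\mathbb{R}^{n-1}$ (since any $p$-fold intersection within $\mathbf{A}''$ has dimension $n - (p+1) = (n-1) - p$, as required by Definition \ref{defn:genericHA}), and $k-1 \leq n-1$. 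By the inductive hypothesis applied inside $\mathbf{H}_k$, the arrangement $\mathbf{A}''$ has $2^{k-1}$ regions, one for each binary $(k-1)$-tuple $\vec{\theta}'$. Since each region of $\mathbf{A}''$ corresponds to a nonempty intersection $\mathbf{H}_k \cap \operatorname{int}(R_{\vec{\theta}'}(\mathbf{A}'))$, every region of $\mathbf{A}'$ is indeed cut by $\mathbf{H}_k$, and so $r(\mathbf{A}) = 2 \cdot 2^{k-1} = 2^k$.

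Finally, for the bijective correspondence between regions of $\mathbf{A}$ and binary $k$-tuples, I note that the labeling map is always injective (two regions with identical co-orientation labels lie in the same connected component of $\mathbb{R}^n \setminus \bigcup \mathbf{H}_i$), and since both the domain (regions) and codomain ($\{0,1\}^k$) have cardinality $2^k$, injectivity forces bijectivity.
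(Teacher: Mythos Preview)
Your proof is correct and takes essentially the same approach as the paper's: both use the deletion--restriction decomposition, applying the inductive hypothesis to the deleted arrangement $\mathbf{A}'$ in $\mathbb{R}^n$ and to the restricted arrangement $\mathbf{A}''$ inside $\mathbf{H}_k \cong \mathbb{R}^{n-1}$ to obtain $r(\mathbf{A}') = r(\mathbf{A}'') = 2^{k-1}$. The only difference is packaging: the paper cites Stanley's general identity $r(\mathcal{A}) = r(\mathcal{A}') + r(\mathcal{A}'')$ directly, whereas you argue more explicitly that the $2^{k-1}$ regions of $\mathbf{A}''$ witness that $\mathbf{H}_k$ cuts every region of $\mathbf{A}'$, yielding $r(\mathbf{A}) = 2\,r(\mathbf{A}')$; these are equivalent in the generic case.
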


\begin{lemma} \label{lem:klessthann} Let $\bf{A} = \{H_1, \ldots, H_k\}$ be a (generic or non-generic) ordered, co-oriented hyperplane arrangement of $k \leq n$ hyperplanes in $\mathbb{R}^n$. $\bf{A}$ has no bounded regions.
\end{lemma}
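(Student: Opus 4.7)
I would fix a region $R_{\vec{\theta}}$ of $\mathbf{A}$ and exhibit a nonzero vector $\vec{v}\in\mathbb{R}^n$ such that for every $p \in R_{\vec{\theta}}$ the entire ray $\{p + t\vec{v} : t \geq 0\}$ lies in $R_{\vec{\theta}}$; this immediately forces $R_{\vec{\theta}}$ to be unbounded.

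Writing each hyperplane as $H_i = \{x \in \mathbb{R}^n : W_i \cdot x + b_i = 0\}$ with co-orientation such that $W_i \cdot x + b_i > 0$ on $H_i^+$, and letting $\epsilon_i = +1$ if $\theta_i = 1$ and $\epsilon_i = -1$ if $\theta_i = 0$, the region $R_{\vec{\theta}}$ is cut out by the strict inequalities $\epsilon_i(W_i \cdot x + b_i) > 0$ for $i = 1, \ldots, k$. Since $\epsilon_i(W_i \cdot p + b_i) > 0$ already holds for each $p \in R_{\vec{\theta}}$, the ray condition is implied by the purely linear requirement
\[
\epsilon_i (W_i \cdot \vec{v}) \geq 0 \qquad \text{for all } i = 1, \ldots, k.
\]
So the whole problem reduces to producing a nonzero $\vec{v}$ satisfying these $k$ homogeneous inequalities.

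I would then split into two cases depending on the span of the normals $W_1, \ldots, W_k$ (all nonzero, since each $H_i$ is a genuine hyperplane). If $\mathrm{span}\{W_1, \ldots, W_k\}$ is a proper subspace of $\mathbb{R}^n$ (which is automatic when $k < n$, but may also occur when $k = n$ in the non-generic case), I take $\vec{v}$ to be any nonzero vector in its orthogonal complement; then $W_i \cdot \vec{v} = 0$ for every $i$, so the inequalities hold trivially. If instead the $W_i$ span $\mathbb{R}^n$, then since $k \leq n$ we must have $k = n$ and the $W_i$ linearly independent; I would let $\{v_1, \ldots, v_n\}$ be the dual basis defined by $W_i \cdot v_j = \delta_{ij}$ and set $\vec{v} = \sum_{i=1}^n \epsilon_i v_i$. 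A direct computation gives $W_j \cdot \vec{v} = \epsilon_j$, whence $\epsilon_j (W_j \cdot \vec{v}) = \epsilon_j^2 = 1 > 0$ for every $j$, and $\vec{v} \neq 0$ because $\{v_i\}$ is a basis and the $\epsilon_i$ are nonzero.

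The argument is essentially linear-algebraic and I do not expect a serious obstacle; the only bookkeeping to watch is getting the signs right in the two cases and remembering that the non-generic degenerations allowed by the hypothesis only make the first case easier (the span drops, so the orthogonal complement grows). This treatment also yields slightly more than the statement requires: every region in fact contains an affine ray, and when $k < n$ it contains an entire affine subspace of dimension $n - \dim\mathrm{span}\{W_i\} \geq n - k > 0$.
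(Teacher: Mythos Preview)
Your proof is correct and takes a genuinely different route from the paper's. The paper argues by contradiction via polar duality: if a region were bounded it would be an $n$--dimensional polytope $R$, and its polar dual $R^*$ would also be an $n$--dimensional polytope whose vertices correspond to the facets of $R$, hence to a subset of the $k$ bounding hyperplanes; but the convex hull of $k$ points has dimension at most $k-1$, forcing $k \geq n+1$. Your argument is instead a direct, constructive linear-algebra computation that exhibits an explicit recession direction for each region---either a vector orthogonal to all the normals $W_i$, or (when the $W_i$ form a basis) the signed sum of the dual basis vectors. Your approach is more elementary and yields a bit more information (every region contains an affine ray, and indeed an affine subspace of dimension $n - \operatorname{rank}\{W_i\}$), while the paper's approach is chosen in part, as they note, to advertise the polar duality between facets and vertices that echoes the domain/parameter-space duality relevant elsewhere in the paper.
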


\begin{lemma} \label{lem:vertexint} 
Let $\mathcal{A} = \{H_1, \ldots, H_{N}\}$  be any arrangement of $N$ hyperplanes in $\mathbb{R}^n$.  Then for any rank $n$, size $n$ subarrangement $\{H_{i_1}, \ldots, H_{i_n}\}$ of $\mathcal{A}$, $H_{i_1} \cap \ldots \cap H_{i_n}$ is a $0$--cell (vertex) of the canonical polyhedral complex $\mathcal{C}(\mathcal{A})$.  Conversely, every $0$-cell of $\mathcal{C}(\mathcal{A})$ can be realized as  $H_{i_1} \cap \ldots \cap H_{i_n}$ for some rank $n$, size $n$ subarrangement $\{H_{i_1}, \ldots, H_{i_n}\} \subseteq \mathcal{A}$.
\end{lemma}

\begin{proof} Let $\mathcal{A}' = \{H_{i_1}, \ldots, H_{i_n}\}$ be a rank $n$, size $n$ subarrangement of $\mathcal{A}$. From Lemma \ref{lem:GenericArrangementInvertibleMap} it follows that $\mathcal{A}'$ is a generic arrangement, and hence the $n$--fold intersection, $H_{i_1} \cap \ldots \cap H_{i_n}$, is an affine subspace of $\mathbb{R}^n$ of dimension $0$. Indeed, since the $(n-k)$--dimensional affine subspaces of $\mathbb{R}^n$ associated to $k$--fold intersections of $k$--element subsets of $\mathcal{A}'$ are reverse-ordered by inclusion, we see that $p = H_{i_1} \cap \ldots \cap H_{i_n}$ is the unique $0$--cell in the boundary of all cells of the polyhedral complex $\mathcal{C}(\mathcal{A}')$. Since $\mathcal{A}$ is obtained from $\mathcal{A}'$ by adding hyperplanes, $\mathcal{C}(\mathcal{A})$ is a polyhedral subdivision of $\mathcal{C}(\mathcal{A}')$, and so $p$ is also a $0$--cell (vertex) of $\mathcal{C}(\mathcal{A})$, as desired.

For the converse statement, we proceed by induction on $n$. For the base case ($n =1$), it follows directly from the definition of $\mathcal{C}(\mathcal{A})$ that every $0$--cell (vertex) is a hyperplane of $\mathcal{A}$. Now let $n>1$ and suppose $p$ is a $0$--cell of $\mathcal{C}(\mathcal{A})$. We know that $p \in K$ for some hyperplane $K \in \mathcal{A}$. Consider the restricted {\em solution set arrangement}, $\mathcal{S}_K = \{K \cap H\,\,|\,\, H \in \mathcal{A}\setminus K\}$, from which we obtain a restricted hyperplane arrangement, $\mathcal{A}_K$, by deleting the degenerate solution sets. Then $p$ is also a $0$--cell in the canonical polyhedral complex $\mathcal{C}(\mathcal{A}_K)$. Since $\mathcal{A}_K$ is an $(n-1)$--dimensional hyperplane arrangement, the inductive hypothesis tells us that there exist hyperplanes $H_{i_1}, \ldots, H_{i_{n-1}} \in \mathcal{A}$ such that 
$p = (K \cap H_{i_1}) \cap \ldots \cap (K \cap H_{i_{n-1}})$ in $K$. Letting $H_{i_n} = K$, it follows that $p = H_{i_1} \cap \ldots \cap H_{i_n}.$  Moreover, $\mathcal{A}'$ must be rank $n$, since otherwise its intersection would be an affine space of dimension $>0$.
\end{proof}

\begin{corollary} \label{cor:vertexenum} 
%Let  $\{(W_i|b_i)\}_{i=1}^N$  be the rows of an augmented weight-bias matrix such that for every $i \in \{1, \ldots, N\}$,  the solution set $H_i$ associated to the $i^{\textrm{th}}$ row (as in equation \eqref{eq:affsolnset}) is a hyperplane in $\mathbb{R}^n$.  

Let $\mathcal{A} = \{H_1, \ldots, H_N\}$  be any arrangement of $N$ hyperplanes in $\mathbb{R}^n$ obtained from the rows,  
$\{(W_i|b_i)\}_{i=1}^N$, of the weight-bias matrix associated to a nondegenerate affine linear transformation as in equation \eqref{eq:affsolnset}. 
 There is a canonical surjective map from the set of linearly-independent $n$--element subsets (subbases), $\{W_{i_1}, \ldots, W_{i_n}\} \subseteq \{W_1, \ldots, W_N\}$, to the set of vertices of 
 $\mathcal{C}(\mathcal{A})$.
 %$\mathcal{C}(\{H_1,\ldots,H_n\})$.
\end{corollary}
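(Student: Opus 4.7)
The plan is to deduce Corollary \ref{cor:vertexenum} almost immediately from Lemma \ref{lem:vertexint} by interpreting the rank condition there in terms of linear independence of the weight rows.

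First I would define the map explicitly. Given a linearly independent $n$-element subset $\{W_{i_1}, \ldots, W_{i_n}\} \subseteq \{W_1, \ldots, W_N\}$, consider the corresponding subarrangement $\mathcal{A}' = \{H_{i_1}, \ldots, H_{i_n}\} \subseteq \mathcal{A}$. Since the normals $W_{i_1}, \ldots, W_{i_n}$ span $\mathbb{R}^n$, the subarrangement $\mathcal{A}'$ has rank $n$. By the forward direction of Lemma \ref{lem:vertexint}, the $n$-fold intersection $p := H_{i_1} \cap \ldots \cap H_{i_n}$ is a single point, and this point is a vertex of $\mathcal{C}(\mathcal{A})$. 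Thus the assignment
\[\{W_{i_1}, \ldots, W_{i_n}\} \longmapsto H_{i_1} \cap \ldots \cap H_{i_n}\]
is a well-defined map from subbases of $\{W_1, \ldots, W_N\}$ to vertices of $\mathcal{C}(\mathcal{A})$. It is canonical in that it depends only on the data $\{(W_i|b_i)\}_{i=1}^N$ specifying $\mathcal{A}$.

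Next I would verify surjectivity using the converse direction of Lemma \ref{lem:vertexint}. Given any vertex $p$ of $\mathcal{C}(\mathcal{A})$, the lemma guarantees a rank $n$ subarrangement $\{H_{i_1}, \ldots, H_{i_n}\} \subseteq \mathcal{A}$ with $p = H_{i_1} \cap \ldots \cap H_{i_n}$. The rank $n$ condition says precisely that the normal vectors $\{W_{i_1}, \ldots, W_{i_n}\}$ span $\mathbb{R}^n$; since there are exactly $n$ of them, they must be linearly independent, i.e., form a subbasis. This subbasis maps to $p$ under the assignment above, establishing surjectivity.

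No step poses any real obstacle here, since all of the content is carried by Lemma \ref{lem:vertexint}; the only thing to verify is the translation between the \emph{rank $n$} condition on the hyperplane subarrangement and the \emph{linear independence} condition on the corresponding weight rows, which is immediate from the definition of hyperplane rank as the dimension of the span of the normals.
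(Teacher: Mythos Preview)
Your proposal is correct and follows essentially the same approach as the paper: both establish the bijection between subbases of $\{W_1,\ldots,W_N\}$ and rank-$n$, size-$n$ subarrangements of $\mathcal{A}$, and then invoke Lemma~\ref{lem:vertexint} (in both directions) to obtain the surjective map to vertices via $n$-fold intersection.
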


\begin{proof} Subbases of the set $\{W_1, \ldots, W_N\}$ of weight vectors are in canonical bijective correspondence with the set of rank $n$, size $n$ subarrangements of the arrangement $\mathcal{A} = \{H_1,\ldots,H_n\}$. By Lemma \ref{lem:vertexint}, there is a surjective map from the set of rank $n$, size $n$ subarrangements of $\mathcal{A}$ to the set of vertices of $\mathcal{C}(\mathcal{A})$, defined by taking the $n$--fold intersection of the hyperplanes in the subarrangement.
\end{proof}

\begin{lemma} \label{lem:vertexadj} Let $\mathcal{A} = \{H_1, \ldots, H_N\} \subset \mathbb{R}^n, \{(W_i|b_i)\}_{i=1}^N$ be as above, and let \[\mathcal{B}_p = \{W_{i_1}, \ldots, W_{i_n}\} \mbox{ and } \mathcal{B}_q = \{W_{j_1}, \ldots, W_{j_n}\}\] be two subbases of $\{W_1, \ldots, W_N\}$ with corresponding canonical vertices $p$ and $q$, respectively, as guaranteed by Corollary \ref{cor:vertexenum}. If $|\mathcal{B}_p \cap \mathcal{B}_q| = n-1,$ then either $p=q$ or $p \cup q$ is the boundary of a $1$--cell in $\mathcal{C}(\mathcal{A})$.
\end{lemma}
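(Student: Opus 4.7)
The plan is to identify an affine line in $\mathbb{R}^n$ that contains both $p$ and $q$, and then to argue that either the two vertices coincide or the segment between them is a $1$--cell of $\mathcal{C}(\mathcal{A})$. Since $|\mathcal{B}_p \cap \mathcal{B}_q| = n-1$, denote the common weight vectors by $\{W_{k_1}, \ldots, W_{k_{n-1}}\}$. These vectors are linearly independent because they form a subset of the subbasis $\mathcal{B}_p$. The intersection $L := H_{k_1} \cap \cdots \cap H_{k_{n-1}}$ is therefore the solution set of a rank-$(n{-}1)$ inhomogeneous linear system in $\mathbb{R}^n$, which by rank--nullity is a one-dimensional affine subspace.

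Writing $\mathcal{B}_p = \{W_{k_1}, \ldots, W_{k_{n-1}}, W_{i_n}\}$ and $\mathcal{B}_q = \{W_{k_1}, \ldots, W_{k_{n-1}}, W_{j_n}\}$, we obtain $p = L \cap H_{i_n}$ and $q = L \cap H_{j_n}$, so both points lie on $L$. If $p = q$, the lemma holds. Otherwise, I would take the closed segment $\overline{pq} \subset L$ as the candidate $1$--cell. By a short argument in the spirit of Lemma \ref{lem:vertexint}, the restriction of $\mathcal{C}(\mathcal{A})$ to $L$ is a one-dimensional polyhedral complex whose $0$--cells are the transverse intersections of $L$ with hyperplanes of $\mathcal{A}$ not containing $L$, and whose $1$--cells are the (bounded or unbounded) intervals between consecutive $0$--cells.

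The main obstacle lies in verifying that $\overline{pq}$ is a single $1$--cell of this induced decomposition, i.e., that no hyperplane of $\mathcal{A}$ cuts its interior. The natural strategy is a matroid-style exchange argument: any intermediate intersection $L \cap H_\ell$ strictly between $p$ and $q$ would yield a third subbasis $\{W_{k_1}, \ldots, W_{k_{n-1}}, W_\ell\}$ and a corresponding vertex of $\mathcal{C}(\mathcal{A})$, and the technical heart of the proof would be to rule out this configuration using the specific linear-algebraic data associated with the weight--bias pairs $(W_i|b_i)$. Once this step is in place, $\overline{pq}$ is a $1$--cell of $\mathcal{C}(\mathcal{A})$ whose boundary is precisely $p \cup q$, completing the proof.
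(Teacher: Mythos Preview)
Your setup is exactly the paper's: after reindexing, both proofs observe that the common $(n{-}1)$ weight vectors are independent, so $L := H_{k_1}\cap\cdots\cap H_{k_{n-1}}$ is a one--dimensional affine subspace containing both $p = L\cap H_{i_n}$ and $q = L\cap H_{j_n}$. The paper then simply writes ``The conclusion follows,'' whereas you (rightly) pause and ask why the closed segment $\overline{pq}$ is a single $1$--cell of $\mathcal{C}(\mathcal{A})$ rather than a union of several.

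The gap you isolate is genuine, and in fact it \emph{cannot} be filled at the level of generality in which the lemma is stated. For $N>n+1$ the assertion is false: in $\mathbb{R}^2$ take the generic arrangement $H_1:\,y=0$, $H_2:\,x=0$, $H_3:\,x-y=1$, $H_4:\,x+y=3$. With $\mathcal{B}_p=\{W_1,W_2\}$ and $\mathcal{B}_q=\{W_1,W_4\}$ one has $p=(0,0)$, $q=(3,0)$, $|\mathcal{B}_p\cap\mathcal{B}_q|=1=n-1$, and yet $H_3$ meets $L=H_1$ at $(1,0)$, strictly between $p$ and $q$. So $\{p,q\}$ is not the boundary of any $1$--cell of $\mathcal{C}(\mathcal{A})$. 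No ``matroid exchange'' argument will rescue this; the obstruction you sensed is an actual counterexample.

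What saves the paper is that the lemma is invoked only through Corollary~\ref{cor:vertexadjnplus1}, the case $N=n+1$. There $\mathcal{B}_p\cup\mathcal{B}_q$ already exhausts all $n+1$ weight vectors, so the only hyperplanes of $\mathcal{A}$ not containing $L$ are $H_{i_n}$ and $H_{j_n}$, which meet $L$ precisely at $p$ and $q$. Hence no third hyperplane can cut the open segment, and $\overline{pq}$ really is a $1$--cell with boundary $\{p,q\}$. Your proof becomes complete (and matches the paper's intended argument) once you add the hypothesis $N=n+1$, or more generally $\mathcal{B}_p\cup\mathcal{B}_q = \{W_1,\ldots,W_N\}$; without it, the statement you are trying to prove is false.
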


\begin{proof} If \[\mathcal{B}_p = \{W_{i_1}, \ldots, W_{i_n}\} \mbox{ and } \mathcal{B}_q = \{W_{j_1}, \ldots, W_{j_n}\}\] are as above, then (reordering $j_1, \ldots, j_n$ if necessary) we may assume that $i_k = j_k$ for $k=1, \ldots, n-1$ and $i_n \neq j_n$. Now (the proof of) Corollary \ref{cor:vertexenum} tells us that \[p = \left(H_{i_1} \cap \ldots \cap H_{i_{n-1}}\right) \cap H_{i_n} \mbox{ and } q = \left(H_{i_1} \cap \ldots \cap H_{i_{n-1}}\right) \cap H_{j_n},\] which tells us that $p$ and $q$ are points on the same $1$--dimensional affine space, $\left(H_{i_1} \cap \ldots \cap H_{i_{n-1}}\right) \subseteq \mathcal{C}(\mathcal{A})$. 
\end{proof}

\begin{corollary} \label{cor:vertexadjnplus1} Let $\mathcal{A} = \{H_1, \ldots, H_{n+1}\}$ be any (generic or non-generic) arrangement of hyperplanes in $\mathbb{R}^n$. Every pair of $0$--cells (vertices), $p \neq q$, of $\mathcal{C}(\mathcal{A})$ is connected by some $1$--cell (edge) of $\mathcal{C}(\mathcal{A})$. That is, all vertices are adjacent in the graph $\mathcal{C}(\mathcal{A})_1$.
\end{corollary}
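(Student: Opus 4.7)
The plan is to deduce this directly from Corollary~\ref{cor:vertexenum} and Lemma~\ref{lem:vertexadj} via a short pigeonhole argument on subbases. The key observation is that with only $n+1$ weight vectors available, any two linearly independent subbases of size $n$ are forced to share $n-1$ elements.

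More precisely, given distinct vertices $p, q$ of $\mathcal{C}(\mathcal{A})$, I would first apply Corollary~\ref{cor:vertexenum} to choose linearly independent subbases $\mathcal{B}_p, \mathcal{B}_q \subseteq \{W_1, \ldots, W_{n+1}\}$, each of size $n$, with $\mathcal{B}_p$ mapping to $p$ and $\mathcal{B}_q$ mapping to $q$ under the canonical surjection. Since $\mathcal{B}_p$ and $\mathcal{B}_q$ are both $n$-element subsets of the $(n+1)$-element set $\{W_1,\ldots,W_{n+1}\}$, inclusion--exclusion forces $|\mathcal{B}_p \cap \mathcal{B}_q| \in \{n-1, n\}$.

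The case $|\mathcal{B}_p \cap \mathcal{B}_q| = n$ is ruled out: it would mean $\mathcal{B}_p = \mathcal{B}_q$, but then the canonical map sends this common subbasis to a single vertex, forcing $p = q$ and contradicting our assumption. Therefore $|\mathcal{B}_p \cap \mathcal{B}_q| = n-1$, and Lemma~\ref{lem:vertexadj} applies. Since we have already excluded $p = q$, the conclusion of that lemma gives that $p \cup q$ is the boundary of a $1$--cell in $\mathcal{C}(\mathcal{A})$, which is precisely the desired edge in $\mathcal{C}(\mathcal{A})_1$.

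I do not anticipate any real obstacle here; the argument is purely combinatorial and rests entirely on the already established structure theorems. The only subtlety worth being careful about is that the map from subbases to vertices need not be injective, so one must phrase the argument as choosing \emph{some} subbasis for each vertex rather than identifying the vertex with a unique subbasis, and then noting that equality of subbases would force equality of the associated vertices.
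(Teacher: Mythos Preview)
Your proposal is correct and follows essentially the same approach as the paper's proof: both use Corollary~\ref{cor:vertexenum} to pick subbases of size $n$ from the $(n+1)$-element weight set, observe that any two such subbases must overlap in at least $n-1$ elements, and then invoke Lemma~\ref{lem:vertexadj}. Your version is in fact slightly more careful than the paper's, since you explicitly rule out the case $|\mathcal{B}_p \cap \mathcal{B}_q| = n$ (which would force $\mathcal{B}_p = \mathcal{B}_q$ and hence $p = q$), whereas the paper leaves this implicit.
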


\begin{proof} Every pair of $n$--element subsets of an $(n+1)$--element set has a common $(n-1)$--element subset. So if $\mathcal{B}_p, \mathcal{B}_q$ are two subbases of $\{W_1, \ldots, W_{n+1}\}$ and $p \neq q$, then Lemma \ref{lem:vertexadj} tells us $p$ and $q$ are adjacent in $\mathcal{C}(\mathcal{A})_1$.
\end{proof}

The following lemma is immediate.
 
\begin{lemma} \label{l:stratification}
Let ${\bf A} = \{{\bf H}_1, \ldots, {\bf H}_n\}$  be a generic, ordered, co-oriented arrangement of $n$ hyperplanes in $\mathbb{R}^n$. 
Then the map from $\{0,1\}^n$ to the set of faces of the polyhedral set $\bar{R}_{\vec{1}}$ given by 
\begin{equation} \label{eq:Fthetadef}
\vec{\theta} \in \{0,1\}^n \ \mapsto F_{\vec{\theta}} \ := \{\vec{\theta} \odot \vec{v}\,\,|\,\,\vec{v} \in \bar{R}_{\vec{1}}\}
\end{equation}
is a bijection. 
\end{lemma}

 Here, $\vec{1}$ denotes the vector whose entries are all $1$, and  $\vec{\theta} \odot \vec{v}$ denotes the {\em Hadamard product} (component-wise product) of $\vec{\theta}$ and $\vec{v}$.  Accordingly, $F_{\vec{1}} = \bar{R}_{\vec{1}}$, and $F_{\vec{0}} = \{(0,\ldots, 0)\}$. Moreover, $\mbox{dim}(F_{\vec{\theta}}) = \sum_{i=1}^n \theta_i$.

Let ${\bf A}^{st}$ be the standard ordered, co-oriented coordinate hyperplane arrangement in $\mathbb{R}^n$. That is, ${\bf A}^{st} = \{{\bf H}^{st}_i\}_{i=1}^n$, where \[{\bf H}^{st}_i := \{\vec{v} = (v_1, \ldots, v_n) \in \mathbb{R}^n \,\,|\,\, v_i = 0\},\] co-oriented in the direction of the non-negative half-space, $\{\vec{v} \in \mathbb{R}^n \,\,|\,\, v_i \geq 0\}.$ 
For $\vec{\theta} \in \{0,1\}^n$, we shall denote by $F^{st}_{\vec{\theta}}$ the corresponding face of the polyhedral set $\bar{R}^{st}_{\vec{1}}$, as in \eqref{eq:Fthetadef}. In summary, we have:

\begin{lemma} \label{lem:ReLULayer}
Let ${\bf A} = \{{\bf H}_1, \ldots, {\bf H}_n\}$  be a generic, ordered, co-oriented arrangement of $n$ hyperplanes in $\mathbb{R}^n$ associated to a generic layer map $\sigma \circ A:\mathbb{R}^n \to \mathbb{R}^n$.  
 Then $A$ maps the ${\bf A}$ decomposition of $\mathbb{R}^n$ to the ${\bf A}^{st}$ decomposition of $\mathbb{R}^n$, in the following sense:
\begin{itemize}
	\item $A({\bf H}_i) = {\bf H}_i^{st}$ for all $i=1, \ldots, n$, and
	\item $A(R_{\vec{\theta}}) = R_{\vec{\theta}}^{st}$ and $A(F_{\vec{\theta}}) = F_{\vec{\theta}}^{st}$ for all $\vec{\theta}\in \{0,1\}^n$.
\end{itemize}
Moreover, $\sigma \circ A$ is the composition of the affine isomorphism $A$ realizing the above identification, followed by the projection $\bar{R}_{\vec{\theta}}^{st} \rightarrow F_{\vec{\theta}}^{st}$ given by taking the Hadamard product with $\vec{\theta} \in \{0,1\}^n \subseteq \mathbb{R}^n$.
\end{lemma}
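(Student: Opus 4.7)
The plan is to use Lemma \ref{lem:GenericArrangementInvertibleMap} as the starting point: since $\mathbf{A}$ is a generic arrangement of $n$ hyperplanes in $\mathbb{R}^n$, the affine map $A:\mathbb{R}^n \to \mathbb{R}^n$ is invertible, hence is an affine isomorphism of $\mathbb{R}^n$. This immediately implies that $A$ sends polyhedral sets to polyhedral sets of the same dimension and induces a bijection on the cells of the associated decomposition.

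Next, I would unpack the defining equation of $\mathbf{H}_i$ to identify $A(\mathbf{H}_i) = \mathbf{H}_i^{st}$. Writing $A(\vec{x}) = W\vec{x} + \vec{b}$, the $i$-th coordinate of $A(\vec{x})$ is exactly $W_i \cdot \vec{x} + b_i$, which is the linear functional whose zero set defines $\mathbf{H}_i$. Thus $\vec{x} \in \mathbf{H}_i$ if and only if $\pi_i(A(\vec{x})) = 0$, i.e.\ $A(\vec{x}) \in \mathbf{H}_i^{st}$. The same computation shows that $A(\mathbf{H}_i^+) = \{\vec{y} : y_i > 0\} = (\mathbf{H}_i^{st})^+$, so the co-orientations are preserved. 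Since a region $R_{\vec{\theta}}$ is characterized by the binary tuple recording which co-oriented half-spaces contain it, and this data is preserved under $A$, we conclude $A(R_{\vec{\theta}}) = R_{\vec{\theta}}^{st}$; passing to closures and applying Lemma \ref{l:stratification} gives $A(F_{\vec{\theta}}) = F_{\vec{\theta}}^{st}$ as well.

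Finally, I would verify the description of $\sigma \circ A$ on each closed region by applying $\sigma$ coordinate-wise. A point $\vec{y} \in \bar{R}_{\vec{\theta}}^{st}$ satisfies $y_i \geq 0$ when $\theta_i = 1$ and $y_i \leq 0$ when $\theta_i = 0$, so $\sigma(\vec{y})_i$ equals $y_i$ in the first case and $0$ in the second. This is precisely the Hadamard product $\vec{\theta} \odot \vec{y}$, which lands in $F_{\vec{\theta}}^{st}$ by Remark \ref{r:dim}. Composing with the identification $A:\bar{R}_{\vec{\theta}} \to \bar{R}_{\vec{\theta}}^{st}$ from the previous step gives the claimed factorization of $\sigma \circ A$.

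There is no substantial obstacle here: the entire argument is a matter of unpacking definitions, with the only nontrivial input being the invertibility of $A$ supplied by Lemma \ref{lem:GenericArrangementInvertibleMap}. The cleanest presentation will probably treat the coordinate-wise description $\pi_i \circ A(\vec{x}) = W_i \cdot \vec{x} + b_i$ as the single key identity from which all three bullet points of the lemma follow.
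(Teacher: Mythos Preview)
Your proposal is correct and is essentially the same approach as the paper's, which simply says ``Immediate from the definition of the ReLU function.'' You have carefully unpacked exactly the details the paper leaves implicit---the invertibility of $A$ from Lemma~\ref{lem:GenericArrangementInvertibleMap}, the coordinate identity $\pi_i \circ A(\vec{x}) = W_i \cdot \vec{x} + b_i$, and the coordinate-wise action of $\sigma$---so there is nothing to add or correct.
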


\subsection{ Regions of bent hyperplane arrangements and the $\nabla F$--oriented $1$--skeleton of the canonical polyhedral complex}

We can  similarly endow the activation regions of a generic, transversal ReLU neural network with a binary labeling as follows.  Recall that if $F: \mathbb{R}^{n_0} \rightarrow \mathbb{R}$ %\[F: \xymatrix@1{\mathbb{R}^{n_0} \ar[r]^{F_1} & \mathbb{R}^{n_1} \ar[r]^{F_2} & \ldots \ar[r]^{F_m} & \mathbb{R}^{n_m} \ar[r]^{G} & \mathbb{R}}\] 
is transversal and generic, Theorem \ref{thm:nminus1skel} guarantees that the domain of the $(n_0-1)$--skeleton of $\mathcal{C}(F)$ (resp., the $n_0$--cells) agrees with the bent hyperplane arrangement, $\mathcal{B}_F$, (resp., the closures of the activation regions of $\mathcal{B}_F$). 

In this case, the image of every activation region of $F$ (interior of an $n_0$--cell of $\mathcal{C}(F)$) is contained in a unique region of the co-oriented hyperplane arrangement in each layer: 

\begin{definition} \label{def:sequenceOfBinaryTuples}
Let 
\[F: \xymatrix@1{\mathbb{R}^{n_0} \ar[r]^{F_1} & \mathbb{R}^{n_1} \ar[r]^{F_2} & \ldots \ar[r]^{F_m} & \mathbb{R}^{n_m} \ar[r]^G & \mathbb{R}}\] 
be a transversal, generic ReLU neural network, and let ${\bf A}^{(i)}$ denote the co-oriented hyperplane arrangement associated to $F_i$. 
The \emph{$(\vec{\theta}_1, \ldots, \vec{\theta}_m)$--region} of $F$, denoted $R_{(\vec{\theta}_1, \ldots, \vec{\theta}_m)}$, is the unique activation region of $F$ satisfying the property that for each $i \in \{1, \ldots m\}$, 
\begin{equation*} \label{eq:binarytuplesequence}
F_{i-1} \circ \ldots \circ F_1 \left(R_{(\vec{\theta}_1, \ldots, \vec{\theta}_m)}\right) \subseteq R_{\vec{\theta}_{i}}\left({\bf A}^{(i)}\right). 
\end{equation*}
\end{definition}

%\begin{remark} As with regions of classical hyperplane arrangements, the assignment of binary $(n_1, \ldots, n_m)$--tuples to activation regions of a generic, transversal neural network is injective, but need not be surjective. If $n_0 = n_1 = \ldots = n_m$, then the assignment is bijective, by Lemma  \ref{lem:NumRegions}.
%\end{remark}

\begin{lemma} \label{lem:compgradFregion} Let $$F: \mathbb{R}^{n_0}  \xrightarrow{F_1} \mathbb{R}^{n_1}  \xrightarrow{F_2} \mathbb{R}^{n_2}  \xrightarrow{F_3}\dots \xrightarrow{F_m} \mathbb{R}^{n_m}  \xrightarrow{G} \mathbb{R}^1$$ be a generic, transversal ReLU neural network with associated  weight  matrices $W_1, \ldots, W_m, W_{m+1}$.  
Let $\bf{p}$ be a point in a region $R_{(\vec{\theta}_1, \ldots, \vec{\theta}_m)}$ with associated  sequence of binary tuples $(\vec{\theta}_1, \ldots, \vec{\theta}_m)$.
%Let $R_{(\vec{\theta}_1, \ldots, \vec{\theta}_m)}$ be a region of $\mathcal{B}_F$ with associated  sequence of binary tuples $(\vec{\theta}_1, \ldots, \vec{\theta}_m)$, and let ${\bf p}$ be a point in the  \in \bar{R}$, where $\bar{R}$ is the closure of $R$.
 Then \[\left.\frac{\partial F}{\partial {\bf x}}\right|_{{\bf x} = {\bf p}} = W_{m+1}W^{\vec{\theta}_m}_m \cdots W^{\vec{\theta}_1}_1,\] where we define $W^{\vec{\theta}_k}_k$ to be the matrix obtained from $W_k$ by replacing the $i$th row of $W_k$ with $0$'s when the $i$th entry of $\vec{\theta}_k$ is $0$.
\end{lemma}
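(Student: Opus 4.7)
The plan is to reduce the statement to a straightforward application of the chain rule, with the only subtle point being the correct identification of each layer map's Jacobian as a "masked" weight matrix. First I would observe that by Theorem~\ref{thm:nminus1skel}, the activation region $R := R_{(\vec{\theta}_1,\ldots,\vec{\theta}_m)}$ coincides with the interior of an $n_0$-cell of $\mathcal{C}(F)$, and by Lemma~\ref{lem:smoothawayfrombenthyperplanearrangement}, $F$ is smooth on $R$. Since $F$ is moreover affine linear on the closure $\bar{R}$ of this cell, the Jacobian is constant on $R$ and the expression $\partial F/\partial \mathbf{x}\vert_{\mathbf{x}=\mathbf{p}}$ at a boundary point $\mathbf{p} \in \bar R$ is unambiguously understood as this constant linear part.

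Next, I would unpack the action of each layer map on the image of $R$. By Definition~\ref{def:sequenceOfBinaryTuples}, $F_{i-1}\circ\cdots\circ F_1(R) \subseteq R_{\vec{\theta}_i}(\mathbf{A}^{(i)})$, so for any $\mathbf{y}$ in this region the affine image $A_i(\mathbf{y}) \in \mathbb{R}^{n_i}$ has its $j$th coordinate strictly positive when $(\vec{\theta}_i)_j = 1$ and strictly negative when $(\vec{\theta}_i)_j = 0$ (by the co-orientation convention of equation~\eqref{eq:orientations}). Applying $\sigma$ component-wise then yields
\[
F_i(\mathbf{y}) \;=\; \sigma(A_i(\mathbf{y})) \;=\; \vec{\theta}_i \odot A_i(\mathbf{y}),
\]
exactly as encoded in the projection step of Lemma~\ref{lem:ReLULayer}. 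Consequently, on the relevant region, $F_i$ restricts to the affine linear map $\mathbf{y} \mapsto \vec{\theta}_i \odot (W_i \mathbf{y} + \vec{b}_i)$, whose Jacobian matrix is precisely $W_i^{\vec{\theta}_i}$ as defined in the statement (the rows indexed by zero entries of $\vec{\theta}_i$ are killed by the Hadamard product).

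Finally, since $G$ is affine linear with linear part $W_{m+1}$ everywhere, the classical chain rule applied at any $\mathbf{p} \in R$ yields
\[
\left.\frac{\partial F}{\partial \mathbf{x}}\right|_{\mathbf{x}=\mathbf{p}} \;=\; W_{m+1}\,W_m^{\vec{\theta}_m}\cdots W_1^{\vec{\theta}_1},
\]
and by the continuity argument of the first paragraph this identity extends to all $\mathbf{p} \in \bar{R}$. The main (and only) place requiring care is the sign analysis that certifies $\sigma \circ A_i = \vec{\theta}_i \odot A_i$ on the appropriate region, which is handled by the co-orientation bookkeeping inherited from Definition~\ref{def:sequenceOfBinaryTuples} and Lemma~\ref{lem:ReLULayer}; everything else is routine differentiation.
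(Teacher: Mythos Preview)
Your proposal is correct and follows exactly the approach the paper takes: the paper's proof is the single line ``Immediate from the definition of the affine linear function on $R_{(\vec{\theta}_1, \ldots, \vec{\theta}_m)}$ and the chain rule for partial derivatives,'' and what you have written is simply a careful unpacking of that sentence. Your only additional content is the explicit sign analysis showing $\sigma\circ A_i = \vec{\theta}_i \odot A_i$ on the relevant region and the remark that affine linearity on $\bar R$ makes the Jacobian well-defined at boundary points; both are accurate and consistent with the paper's conventions.
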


\begin{proof} Immediate from the definition of the affine-linear function on $R_{(\vec{\theta}_1, \ldots, \vec{\theta}_m)}$ and the chain rule for partial derivatives. 
\end{proof}

The $1$-skeleton, $\mathcal{C}(F)_1$, of the canonical polyhedral decomposition for a neural network $F:\mathbb{R}^{n_0} \to \mathbb{R}$, is an embedded linear graph in $\mathbb{R}^{n_0}$.  This graph has a natural partial orientation, defined as follows.

\begin{definition} \label{defn:oriented1skel}
Let $F:\mathbb{R}^{n_0} \to \mathbb{R}$ be a neural network, denote by $\mathcal{C}(F)_1$ the $1$--skeleton of its canonical polyhedral complex, and let $C$ be a $1$--cell in $\mathcal{C}(F)_1$.  
\begin{itemize}
	\item If $F$ is nonconstant on $C$, orient $C$ in the direction in which $F$ increases. 
	\item If $F$ is constant on $C$, we will leave it unlabeled and refer to $C$ as a {\em flat} edge.
\end{itemize}
We will refer to this (partial) orientation on $\mathcal{C}(F)_1$ as the $\nabla F$--orientation.
\end{definition}

%\begin{remark}
\noindent Note that $F$ is affine-linear on $C$ by Theorem~\ref{thm:BHAispolycpx}. 

\section{Obstructions to Topological Expressiveness and Applications} \label{sec:TopApp}

This section uses the framework developed in the previous sections to give an alternative perspective on the Beise-Cruz-Schr{\"o}der, Hanin-Sellke, Johnson result that a width $n$ ReLU neural network  $F: \mathbb{R}^n \rightarrow \mathbb{R}$ has decision regions that are either empty or unbounded. We also develop an architecture-based obstruction to the existence of multiple bounded connected components in a decision region.

Recall the statement of the Beise-Cruz-Schr{\"o}der, Hanin-Sellke, Johnson result:

\begin{theoremJohnsonResult} \cite{DDCruz, HaninSellke, Johnson}
 For any integer $n \geq 2$,  let $F: \mathbb{R}^n \rightarrow \mathbb{R}$ be a ReLU neural network, all of whose hidden layers have dimension $\leq n$.  
%Let $n \geq 2$.  Suppose \[F: \xymatrix@1{\mathbb{R}^{n} \ar[r]^{F_1} & \mathbb{R}^{n} \ar[r]^{F_2} & \ldots \ar[r]^{F_m} & \mathbb{R}^{n} \ar[r]^G & \mathbb{R}}\]  is a width $n$ ReLU neural network map and $t \in \mathbb{R}$. Each of the sets $N_F(t), B_F(t), Y_F(t)$ (defined in \eqref{decisionregionsnotation}) 
Then for any decision threshold $t$, each of the sets $N_F(t), B_F(t), Y_F(t)$ (defined in \eqref{decisionregionsnotation}) 
is either empty or unbounded.
\end{theoremJohnsonResult}

We will need some elementary facts about the image of a ReLU neural network in the width $n$ case. We address the case of generic and non-generic layer maps separately. 

\begin{proposition} \label{prop:ImN} Let \[N = (F_m \circ \ldots \circ F_1): \mathbb{R}^n \rightarrow \ldots \rightarrow \mathbb{R}^n\] be the composition of all but the final layer map of a generic width $n$ ReLU neural network in which every hidden layer has dimension $n$.  Then $\mbox{Im}(N) \subseteq \bar{R}_{\vec{1}}^{st} \subseteq \mathbb{R}^n$ is the domain of a polyhedral complex, $\mathcal{C}$, with at most one cell of dimension $n$. Explicitly, \[\mbox{Im}(N) = |\mathcal{C}| = \mathcal{P} \cup \mathcal{Q},\] where $\mathcal{P}$ is a (possibly empty) polyhedral set of dimension $n$, and $\mathcal{Q}$ is a union of polyhedral sets of dimension $< n$. 
Moreover, if $\mathcal{P}$ is nonempty, $n \geq 2$, and
 $\mathcal{P} = H_1^+ \cap \ldots \cap H_m^+$
 is an irredundant realization of $\mathcal{P}$ as an intersection of closed half spaces,  then 
 $\{H_1,\ldots,H_m\}$ has rank $\geq 2$. 

\end{proposition}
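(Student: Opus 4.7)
The plan is to proceed by induction on the number of layer maps, using Lemma \ref{lem:ReLULayer} to track how each layer transforms the image, and then to derive the rank bound from a geometric feature of the non-negative orthant.

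First, the containment $\mbox{Im}(N) \subseteq \bar{R}^{st}_{\vec{1}}$ is immediate: the outermost operation in $N = F_m \circ \ldots \circ F_1$ is $\sigma$, whose image is precisely the non-negative orthant $\bar{R}^{st}_{\vec{1}}$.

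For the polyhedral structure, I would set $N_k := F_k \circ \ldots \circ F_1$ and prove inductively that $\mbox{Im}(N_k) = \mathcal{P}_k \cup \mathcal{Q}_k$, where $\mathcal{P}_k \subseteq \bar{R}^{st}_{\vec{1}}$ is a (possibly empty) polyhedral set of dimension $n$ and $\mathcal{Q}_k$ is a finite union of polyhedral sets of dimension $<n$. For the base case $k=1$: genericity together with Lemma \ref{lem:GenericArrangementInvertibleMap} ensures that $A_1$ is an affine isomorphism, so $A_1(\mathbb{R}^n) = \mathbb{R}^n$, and hence $F_1(\mathbb{R}^n) = \sigma(\mathbb{R}^n) = \bar{R}^{st}_{\vec{1}}$; we may take $\mathcal{P}_1 = \bar{R}^{st}_{\vec{1}}$ and $\mathcal{Q}_1 = \emptyset$. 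For the inductive step, apply $F_{k+1} = \sigma \circ A_{k+1}$. The map $A_{k+1}$ is again an affine isomorphism, so $A_{k+1}(\mathcal{P}_k)$ is a polyhedral set of dimension $n$ and $A_{k+1}(\mathcal{Q}_k)$ is a union of polyhedral sets of dimension $<n$. Lemma \ref{lem:ReLULayer}, applied to the standard coordinate arrangement, tells us that $\sigma$ restricts to the identity on $\bar{R}^{st}_{\vec{1}}$ and to the Hadamard-product projection onto $F^{st}_{\vec{\theta}}$ (a face of dimension $<n$) on every other closed orthant $\bar{R}^{st}_{\vec{\theta}}$ with $\vec{\theta} \neq \vec{1}$. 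Consequently, the only possible $n$-dimensional contribution to $\mbox{Im}(N_{k+1})$ is
\[ \mathcal{P}_{k+1} := A_{k+1}(\mathcal{P}_k) \cap \bar{R}^{st}_{\vec{1}}, \]
which is a single polyhedral set as an intersection of two polyhedral sets (possibly empty or of dimension $<n$, in which case it is absorbed into $\mathcal{Q}_{k+1}$). All remaining pieces are images of lower-dimensional sets, namely $\sigma(A_{k+1}(\mathcal{P}_k) \cap \bar{R}^{st}_{\vec{\theta}})$ for $\vec{\theta} \neq \vec{1}$ and $\sigma(A_{k+1}(\mathcal{Q}_k))$, so they make up $\mathcal{Q}_{k+1}$. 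Each piece is polyhedral, and the collection can be realized as the underlying set of a polyhedral complex (subdividing as necessary so that pairwise intersections become mutual faces).

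Finally, I would establish the rank claim by contradiction. Suppose $\mathcal{P} = \mathcal{P}_m$ is nonempty, $n \geq 2$, and $\mathcal{P} = H_1^+ \cap \ldots \cap H_m^+$ is an irredundant realization. If the rank of $\{H_1,\ldots,H_m\}$ were $0$, then there would be no bounding hyperplanes and $\mathcal{P} = \mathbb{R}^n$, contradicting $\mathcal{P} \subseteq \bar{R}^{st}_{\vec{1}}$. If the rank were $1$, all normals would be parallel, and irredundancy would force $m \leq 2$: $\mathcal{P}$ would be either a half-space ($m=1$) or a slab between two oppositely-oriented parallel hyperplanes ($m=2$). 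In either case $\mathcal{P}$ contains an entire affine line. However, for $n \geq 2$, $\bar{R}^{st}_{\vec{1}}$ contains no line: if $\{x_0 + tv : t \in \mathbb{R}\} \subseteq \bar{R}^{st}_{\vec{1}}$ with $v \neq 0$, pick a coordinate $i$ with $v_i \neq 0$; then $(x_0)_i + tv_i$ becomes negative for $t$ of sufficiently large magnitude and sign opposite to $\mathrm{sign}(v_i)$, contradicting inclusion in the orthant. Hence the rank is $\geq 2$.

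The principal obstacle will be the bookkeeping in the inductive step, specifically verifying that lower-dimensional images stay lower-dimensional under each subsequent layer (so no spurious top-dimensional piece creeps into $\mathcal{Q}_{k+1}$), and that the resulting stratification assembles into a polyhedral complex with a single top-dimensional cell. Once the structural statement is in hand, the rank bound follows from the short geometric observation that the non-negative orthant contains no line.
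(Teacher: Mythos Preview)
Your inductive argument for the polyhedral structure is essentially the paper's approach: both proceed by induction on the number of layers, use Lemma~\ref{lem:ReLULayer} to see that $F_{k+1}$ acts as an affine isomorphism on $\bar{R}_{\vec{1}}$ and as a projection to a proper face on every other orthant, and conclude that the only possible $n$--cell in the image is $A_{k+1}(\mathcal{P}_k) \cap \bar{R}^{st}_{\vec{1}}$ (equivalently, $F_{k+1}(\mathcal{P}_k \cap \bar{R}_{\vec{1}}({\bf A}))$). The paper is a bit more explicit about realizing the image as a polyhedral complex, building it as the image of the level set complex $\mathcal{C}'_{\in \mathcal{C}({\bf A}^{st})}$ under $F_m$, whereas you gesture at ``subdividing as necessary''; but the substance is the same.

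Where you genuinely diverge is in the rank $\geq 2$ claim. The paper carries the rank bound through the induction: it assumes $\mathcal{P}'$ has an irredundant bounding arrangement of rank $\geq 2$, observes that $\bar{R}_{\vec{1}}({\bf A})$ does too, and then argues (invoking Farkas' Lemma~III to show that passing to an irredundant subarrangement preserves rank) that the intersection $\mathcal{P}' \cap \bar{R}_{\vec{1}}({\bf A})$ inherits rank $\geq 2$, which is preserved by the affine isomorphism $F_m|_{\bar{R}_{\vec{1}}({\bf A})}$. Your argument is more direct and avoids both the inductive bookkeeping and Farkas: since $\mathcal{P} \subseteq \bar{R}^{st}_{\vec{1}}$ and the non-negative orthant contains no affine line, $\mathcal{P}$ contains no line; but any $n$--dimensional polyhedral set with an irredundant bounding arrangement of rank $\leq 1$ is either all of $\mathbb{R}^n$, a half-space, or a slab, each of which (for $n \geq 2$) contains a line. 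This is a cleaner route to the same conclusion, and it makes transparent exactly where the hypothesis $n \geq 2$ enters.
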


\begin{proof} We proceed by induction on the number, $m$, of layers. If $m=1$, Im($N$) is $\bar{R}_{\vec{1}}^{st}$, which is the domain of a polyhedral complex with a single $n$--cell, $\mathcal{P} = \bar{R}_{\vec{1}}^{st}$, realizable as an irredundant intersection of $n$ half-spaces. Moreover,  if $n \geq 2$, the rank of the corresponding hyperplane arrangement, $\{H^{st}_1, \ldots, H^{st}_n\}$ is $\geq 2$, as required.

Now consider $m \geq 2$ and suppose $\mathcal{C}'$ is the polyhedral complex whose domain agrees with the image, $\mbox{Im}(N') = \mathcal{P}' \cup \mathcal{Q}'$, of the first $m-1$ layers of $N$ as described.
 Let 
\begin{itemize}
	\item ${\bf A}$ be the ordered, co-oriented hyperplane arrangement associated to $F_m$, 
	\item $\mathcal{C}({\bf A})$ be the associated polyhedral decomposition of the domain of $F_m$,% (Definition  \ref{defn:HAPolyCpx}),
	\item and $\mathcal{C}'_{\cap \mathcal{C}({\bf A})}$ be the complex whose cells are pair-wise intersections of cells of $\mathcal{C}'$ with cells of $\mathcal{C}({\bf A})$. 
\end{itemize}

\noindent By Lemma \ref{lem:ReLULayer}, $F_m$ is affine-linear on the cells of $\mathcal{C}'_{\cap \mathcal{C}({\bf A})}$. Indeed, we can alternatively characterize $\mathcal{C}'_{\cap \mathcal{C}({\bf A})}$ as the level set complex, $\mathcal{C}'_{\in \mathcal{C}\left({\bf A}^{st}\right)}$, of $F_m$ relative to the polyhedral decomposition associated to the standard hyperplane arrangement, ${\bf A}^{st}$, in the codomain. 

Noting that the image of a polyhedral set under an affine-linear map is a polyhedral set, we now define $\mathcal{C}$ to be the complex whose cells are the images of cells of $\mathcal{C}'_{\in \mathcal{C}\left({\bf A}^{st}\right)}$ under $F_m$.

It then follows immediately from Lemma \ref{lem:ReLULayer} that all cells of $\mathcal{C}$ have dimension $<n$ except possibly $F_m\left(\mathcal{P}' \cap \bar{R}_{\vec{1}}({\bf A})\right)$. Further, since $F_m$ is an affine isomorphism on $\bar{R}_{\vec{1}}({\bf A})$, this cell will be $n$--dimensional iff $\mathcal{P}' \cap \bar{R}_{\vec{1}}({\bf A})$ is $n$--dimensional. 

In this case,  we claim that as long as $n \geq 2$, an irredundant bounding hyperplane arrangement of the cell $\mathcal{P}' \cap \bar{R}_{\vec{1}}({\bf A})$ will have rank $\geq 2$, 
since both $\mathcal{P}'$ and $\bar{R}_{\vec{1}}({\bf A})$ have this property. To see this, note that the union of the bounding hyperplane arrangements for $\mathcal{P}'$ and $\bar{R}_{\vec{1}}({\bf A})$ yields a bounding hyperplane arrangement for $\mathcal{P}' \cap \bar{R}_{\vec{1}}({\bf A})$, and it necessarily has rank $\geq 2$. If this union is irredundant, we are done. If not, we appeal to Farkas' Lemma III, cf. \cite[Sec. 2]{Ziegler}, which says that any {\em redundant} inequality in a system of linear inequalities is a non-negative linear combination of the other linear inequalities in the system. This implies that the rank of any bounding hyperplane arrangement of a polyhedral set is equal to the rank of an {\em irredundant} bounding hyperplane arrangement.

Moreover, the image under $F_m$ of an irredundant bounding hyperplane arrangement for  $\mathcal{P}' \cap \bar{R}_{\vec{1}}({\bf A})$ is also irredundant and has rank $\geq 2$, 
 since $F_m$ is an affine isomorphism on $\bar{R}_{\vec{1}}({\bf A})$.  Defining $\mathcal{P}$ to be 

\begin{itemize}
	\item $F_m\left(\mathcal{P}'_{\in \bar{R}_{\vec{1}}({\bf A}^{st})}\right)$ if the polyhedral set $\mathcal{P}'_{\in \bar{R}_{\vec{1}}({\bf A}^{st})}$ has dimension $n$, and
	\item $\emptyset$ otherwise,
\end{itemize}

and $\mathcal{Q}$ to be the union of all other cells of $\mathcal{C}$, the result follows.

\end{proof}

\begin{lemma} \label{lem:BdryPreImUnbound} Let $N = (F_m \circ \ldots \circ F_1): \mathbb{R}^n \rightarrow  \mathbb{R}^n$ be the composition of all but the final layer map of a generic width $n$ ReLU neural network, and let 
$\textrm{Im}(N) = |\mathcal{C}| = \mathcal{P} \cup \mathcal{Q}$ as in Proposition \ref{prop:ImN} above. If  $x \in |\mathcal{C}_{n-1}|$,  %= (\partial \mathcal{P} \cup \mathcal{Q})$, 
then $N^{-1}(\{x\})$ is unbounded.
\end{lemma}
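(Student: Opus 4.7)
I would induct on $m$. For the inductive step set $N' = F_{m-1} \circ \cdots \circ F_1$ with canonical image decomposition $\mbox{Im}(N') = |\mathcal{C}'| = \mathcal{P}' \cup \mathcal{Q}'$ from Proposition \ref{prop:ImN}. Writing $S := F_m^{-1}(\{x\}) \cap \mbox{Im}(N')$, I have $N^{-1}(\{x\}) = (N')^{-1}(S)$. Since $N'$ is continuous on all of $\mathbb{R}^n$, the $N'$-preimage of any unbounded subset of $\mbox{Im}(N')$ is unbounded (a bounded set would have bounded image, equal to the subset). So it suffices to show that $S$ is either unbounded or meets $|\mathcal{C}'_{n-1}|$, because the inductive hypothesis hands unboundedness of $(N')^{-1}(\{z\})$ for any $z \in S \cap |\mathcal{C}'_{n-1}|$.

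\textbf{Base case $m=1$.} Then $\mbox{Im}(N) = \bar{R}^{st}_{\vec{1}}$ and $|\mathcal{C}_{n-1}|$ is its topological boundary, so at least one coordinate of $x$ vanishes. Hence $\sigma^{-1}(\{x\})$ is an unbounded polyhedral ``corner'' of positive dimension, and applying the affine isomorphism $A_1^{-1}$ (guaranteed by genericity via Lemma \ref{lem:GenericArrangementInvertibleMap}) yields an unbounded $N^{-1}(\{x\}) = A_1^{-1}(\sigma^{-1}(\{x\}))$.

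\textbf{Inductive step, Case A: all coordinates of $x$ are strictly positive.} Then $\sigma^{-1}(\{x\}) = \{x\}$, so $F_m^{-1}(\{x\})$ is the single point $y = A_m^{-1}(x)$, which lies in the interior of the $n$-cell $\bar{R}_{\vec{1}}({\bf A}^{(m)})$ on which $F_m = A_m$ acts as an affine isomorphism. The construction in the proof of Proposition \ref{prop:ImN} shows the $\mathcal{C}$-cell containing $x$ has the same dimension as the $\mathcal{C}'$-cell containing $y$; since the former has dimension $\leq n-1$, so does the latter, so $y \in |\mathcal{C}'_{n-1}|$. Induction closes this case.

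\textbf{Inductive step, Case B: some coordinate of $x$ vanishes.} Now $F_m^{-1}(\{x\}) = A_m^{-1}(\sigma^{-1}(\{x\}))$ is an unbounded, convex, hence connected, polyhedral set of positive dimension. Suppose for contradiction that $S$ is bounded and contains no point of $|\mathcal{C}'_{n-1}|$. Because $|\mathcal{C}'|$ has at most one $n$-cell (Proposition \ref{prop:ImN}), $\mbox{Im}(N') \setminus |\mathcal{C}'_{n-1}| = \mbox{int}(\mathcal{P}')$, which is open in $\mathbb{R}^n$. Under the contradictory assumption $S = F_m^{-1}(\{x\}) \cap \mbox{int}(\mathcal{P}')$ is open in $F_m^{-1}(\{x\})$ and, being the intersection of $F_m^{-1}(\{x\})$ with the closed set $\mbox{Im}(N')$, also closed there. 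Connectedness of $F_m^{-1}(\{x\})$ and nonemptiness of $S$ (since $x \in \mbox{Im}(N)$) then force $S = F_m^{-1}(\{x\})$, contradicting boundedness. Thus $S$ is unbounded or meets $|\mathcal{C}'_{n-1}|$; in either case $N^{-1}(\{x\})$ is unbounded. The principal obstacle is this Case B: unboundedness of $F_m^{-1}(\{x\})$ does not a priori transfer to $S$, and the clopen/connectedness step above --- leveraging the at-most-one-$n$-cell conclusion of Proposition \ref{prop:ImN} --- is what closes the gap.
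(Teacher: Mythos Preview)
Your proof is correct and follows the same inductive skeleton as the paper's: identical base case via the fibers of $\sigma$ over $\partial\bar{R}^{st}_{\vec 1}$, and an inductive step that analyzes $S=F_m^{-1}(\{x\})\cap\mbox{Im}(N')$.

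The organization of the inductive step differs. The paper splits on whether $x$ lies in $F_m(|\mathcal{C}'_{n-1}|)$: if so, pick a preimage in $|\mathcal{C}'_{n-1}|$ and invoke induction; if not, it asserts $F_m^{-1}(\{x\})\cap|\mathcal{C}'|$ is a ray in $\mbox{int}(\mathcal{P}')$ and pulls it back via the affine isomorphism $N'|_{(N')^{-1}(\mbox{int}(\mathcal{P}'))}$ to a ray in the domain. You instead split on whether $x$ has a vanishing coordinate. Your Case~A (all coordinates positive) is a clean special instance of the paper's first case, and your dimension-matching observation is exactly what forces the unique preimage $y$ into $|\mathcal{C}'_{n-1}|$. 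Your Case~B replaces the paper's ray argument with a clopen/connectedness step: this is arguably more robust, since the paper's ``ray'' description is literally correct only when $x$ has \emph{exactly one} vanishing coordinate, whereas your argument handles the higher-dimensional fibers uniformly. One small point: your sentence ``a bounded set would have bounded image'' is not true for arbitrary continuous maps; it holds here because $(N')^{-1}(S)$ is closed (preimage of the closed set $S$), hence compact if bounded, so its image is compact.
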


\begin{proof}  We proceed  by induction on the number, $m$, of layers. The result is clear when $m=1$, since in this case Lemma \ref{lem:ReLULayer} tells us that $\mathcal{Q} = \emptyset$ and $\mathcal{P} = \bar{R}_{\vec{1}}^{st}$, and each point $x \in |\mathcal{C}_{n-1}| = \partial \mathcal{P} = \partial(\bar{R}_{\vec{1}}^{st})$ is in the image of the projection map $R^{st}_{\vec{\theta}} \rightarrow F^{st}_{\vec{\theta}}$, hence has unbounded preimage. 

 Now suppose $m > 1$ and the result holds for the image, $|\mathcal{C}'| = \mathcal{P}' \cup \mathcal{Q}'$, of the first $m-1$ layers of $N$. That is, each point in $|\mathcal{C}'_{n-1}|$ has unbounded preimage. Let $x \in |\mathcal{C}_{n-1}| \subseteq \mbox{Im}(N)$. If $x$ is in the image of $|\mathcal{C}'_{n-1}|$, then it has unbounded preimage by the inductive hypothesis.  So we may assume that $x$ is in the image of $\mbox{int}(\mathcal{P}')$ and not in the image of $\partial\mathcal{P}'$. But since $x \in |\mathcal{C}_{n-1}|$, Lemma \ref{lem:ReLULayer} implies that 
$F_m^{-1}(\{x\}) \subseteq |\mathcal{C}'|$ is a ray contained in $\mbox{int}(\mathcal{P}')$. Since $\mbox{int}(\mathcal{P}')$ is the image of the interior of a polyhedral set in the domain under a composition of affine-linear isomorphisms, the preimage of this ray is a ray in the domain, hence unbounded. The conclusion follows. 
\end{proof}

\begin{lemma} \label{lem:HypIntBdry}
 Let $\mathcal{P} =  H_1^+ \cap \ldots \cap H_m^+ \subseteq \mathbb{R}^n$ be an irredundant representation of a non-empty $n$--dimensional polyhedral set such that the hyperplane arrangement $\mathcal{A} = \{H_1, \ldots, H_m\}$ has rank $\geq 2$, and let $F_i = \mathcal{P} \cap H_i$ be the bounding facet in $\partial \mathcal{P}$ corresponding to $H_i$.  If $X \subseteq \mathbb{R}^n$ is any affine hyperplane satisfying $X \cap \mathcal{P} \neq \emptyset$, then $X \cap F_i \neq \emptyset$ for some $i$.
\end{lemma}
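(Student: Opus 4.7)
The plan is to argue by contradiction: suppose $X \cap \mathcal{P} \neq \emptyset$ but $X \cap F_i = \emptyset$ for every $i$. By the irredundant representation and Lemma~\ref{lem:convpoly}, the $F_i$ are precisely the facets of $\mathcal{P}$, and since $\mathcal{P}$ is $n$-dimensional, its topological boundary is $\partial\mathcal{P} = \bigcup_i F_i$. Hence the assumption $X \cap F_i = \emptyset$ for every $i$ forces $X \cap \mathcal{P} \subseteq \textrm{int}(\mathcal{P})$.

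Next I would show that $X \cap \mathcal{P}$ is both closed and open as a subset of $X$. Closedness is automatic since $\mathcal{P}$ is closed in $\mathbb{R}^n$. Openness follows from $X \cap \mathcal{P} = X \cap \textrm{int}(\mathcal{P})$, the intersection of $X$ with an open subset of $\mathbb{R}^n$. Since $X$ is connected (as an affine hyperplane, it is homeomorphic to $\mathbb{R}^{n-1}$) and $X \cap \mathcal{P}$ is a nonempty clopen subset, we conclude $X \cap \mathcal{P} = X$, i.e., $X \subseteq \mathcal{P}$.

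Combining $X \subseteq \mathcal{P}$ with the vanishing assumption gives $X \cap H_i = X \cap \mathcal{P} \cap H_i = X \cap F_i = \emptyset$ for every $i$. Two affine hyperplanes in $\mathbb{R}^n$ are disjoint if and only if they are parallel, so each normal $\vec{n}_i$ of $H_i$ is a scalar multiple of the normal $\vec{n}_X$ of $X$. Hence $\textrm{span}\{\vec{n}_1, \ldots, \vec{n}_m\}$ is at most one-dimensional, contradicting the hypothesis that $\mathcal{A}$ has rank at least $2$. The argument is short and there is no serious obstacle; the main conceptual step is the clopen deduction that $X \subseteq \mathcal{P}$, after which the rank hypothesis delivers the contradiction immediately. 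The role of irredundancy is simply to ensure that ``$X$ misses every facet $F_i$'' translates correctly into ``$X \cap \mathcal{P}$ avoids all of $\partial \mathcal{P}$.''
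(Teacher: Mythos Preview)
Your proof is correct and takes a genuinely different route from the paper's. The paper argues by induction on $m$: in the base case $m=2$ it observes that if $X$ misses $F_1$ then $X$ must be parallel to $H_1$, whence the rank hypothesis forces $X$ to meet $F_2$; in the inductive step it drops the last half-space and applies the inductive hypothesis to the smaller irredundant system. Your argument is instead a single direct topological step: from $X$ missing every $F_i$ you get $X\cap\mathcal{P}=X\cap\mathrm{int}(\mathcal{P})$, hence $X\cap\mathcal{P}$ is nonempty clopen in the connected space $X$, so $X\subseteq\mathcal{P}$, and then $X\cap H_i=\emptyset$ for all $i$ forces all the $H_i$ parallel to $X$, contradicting rank $\geq 2$. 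Your approach is shorter and avoids both the induction and any delicate bookkeeping about irredundancy of the truncated system; the paper's approach stays closer to elementary linear-programming reasoning and never invokes connectedness. Incidentally, your clopen argument does not actually require irredundancy: $\partial\mathcal{P}\subseteq\bigcup_i(\mathcal{P}\cap H_i)$ holds for any half-space description, so the hypothesis is used only to match the statement's identification of the $F_i$ with facets.
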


\begin{proof} Let $X$ and $\mathcal{P}$ in $\mathbb{R}^n$ be as described above. Note that the rank assumption implies that $m, n \geq 2$.  Since $X \cap \mathcal{P} \neq \emptyset$, then $X \subseteq \mathcal{P}$ or $X \cap \partial\mathcal{P} \neq \emptyset$. But if $X \subseteq \mathcal{P}$, then $X$ must be parallel to every $H_i$, contradicting the rank assumption. So $X \cap \partial\mathcal{P} \neq \emptyset$. But $\partial\mathcal{P} = F_1 \cup \ldots \cup F_m$, so $X \cap F_i \neq \emptyset$ for some $i$, as desired.
\end{proof}

\begin{lemma} \label{lem:NonGeneric} For any $n \in \mathbb{N}$, let $N = (F_m \circ \ldots \circ F_1): \mathbb{R}^n \rightarrow \mathbb{R}^n$ be the composition of all but the final layer map of a {\em non}-generic width $n$ ReLU neural network. Then each point in $Im(N)$ has unbounded preimage. 
\end{lemma}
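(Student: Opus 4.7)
The plan is to exploit the fact that any non-generic layer $F_k$ has a non-trivial kernel in its weight matrix $W_k$ (by Lemma~\ref{lem:GenericArrangementInvertibleMap}), yielding a direction along which $F_k$ acts constantly; the rest of the argument will show that this constancy direction pulls back through the earlier (necessarily generic) layers to an unbounded subset of every fiber of $N$.

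First I will let $F_k$ denote the smallest-index non-generic layer of $N$, so that $M := F_{k-1}\circ\cdots\circ F_1$ (interpreted as the identity map when $k=1$) is a composition of \emph{generic} width-$n$ layers. I pick $v \in \ker(W_k)\setminus\{\mathbf{0}\}$; the identity $F_k(z+tv) = \sigma(A_k(z)+tW_kv) = F_k(z)$ shows that $F_k$ is constant along every line parallel to $v$. Consequently, given $y \in \textup{Im}(N)$, $x \in N^{-1}(y)$, and $z_{k-1} := M(x)$, every point of $M^{-1}(L)$ with $L := \{z_{k-1}+tv : t\in\mathbb{R}\}$ lies in $N^{-1}(y)$. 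It therefore suffices to prove that $M^{-1}(L)$ is unbounded.

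For $k=1$, $M$ is the identity and $M^{-1}(L) = L$ is itself an unbounded line, settling this case. For $k\geq 2$, I invoke Proposition~\ref{prop:ImN} applied to $M$ to write $\textup{Im}(M) = |\mathcal{C}| = \mathcal{P}\cup\mathcal{Q}$, with $\mathcal{P}$ a (possibly empty) $n$-dimensional polyhedral set and both $\mathcal{Q}$ and $\partial\mathcal{P}$ contained in $|\mathcal{C}_{n-1}|$. If $z_{k-1} \in |\mathcal{C}_{n-1}|$, Lemma~\ref{lem:BdryPreImUnbound} immediately supplies an unbounded subset $M^{-1}(z_{k-1})$ of $N^{-1}(y)$. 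Otherwise $z_{k-1} \in \textup{int}(\mathcal{P})$, and I trace the ray $\{z_{k-1}+tv:t\geq 0\}$: either the ray remains in $\mathcal{P}$ for all $t\geq 0$, in which case choosing $x_t \in M^{-1}(z_{k-1}+tv)\subseteq N^{-1}(y)$ for each $t$ and using that $M$ is Lipschitz forces $|x_t|\to\infty$; or the ray meets $\partial\mathcal{P}\subseteq|\mathcal{C}_{n-1}|$ at some finite parameter $t^\ast$, in which case Lemma~\ref{lem:BdryPreImUnbound} applied to the exit point $z_{k-1}+t^\ast v$ gives the required unbounded preimage.

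The step I expect to be the most delicate is verifying that Proposition~\ref{prop:ImN} and Lemma~\ref{lem:BdryPreImUnbound} genuinely apply to the truncation $M$, since those statements are phrased for the hidden-layer composition of a width-$n$ network with a designated (here absent) final layer $G$. I anticipate this will reduce to noting that both results depend only on the structure of the hidden-layer composition itself, so one may legitimately treat $M$ as the hidden composition of a generic width-$n$ network by appending any generic affine map. Once this bookkeeping is settled, the geometric content is transparent: the non-trivial kernel of $W_k$ provides a line of constancy for $F_k$, and the structural description of generic compositions guarantees that the pullback of this line through $M$ is unbounded, either by a Lipschitz argument in the bulk region $\mathcal{P}$ or by the existing preimage-unboundedness result on the lower-dimensional skeleton.
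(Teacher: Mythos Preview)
Your proof is correct and shares its opening move with the paper's: both select the first non-generic layer $F_k$ and use Lemma~\ref{lem:GenericArrangementInvertibleMap} to produce a nonzero $v\in\ker W_k$, so that $F_k$ is constant along lines parallel to $v$.  From there the two arguments diverge.  The paper simply observes that $F_k^{-1}(p)$ is unbounded in the domain of $F_k$ (being a union of affine subspaces of dimension $\geq 1$) and then asserts directly that $N^{-1}(q)$ is unbounded; it does not address how this unbounded fiber pulls back through the earlier composition $M=F_{k-1}\circ\cdots\circ F_1$, and in particular never argues that the kernel line through $M(x)$ stays inside $\mbox{Im}(M)$ or otherwise yields an unbounded preimage.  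You supply exactly this step, exploiting the fact that $M$ is \emph{generic} to invoke the structural description of $\mbox{Im}(M)$ from Proposition~\ref{prop:ImN} and the boundary-preimage result of Lemma~\ref{lem:BdryPreImUnbound}: either the kernel ray reaches the $(n{-}1)$--skeleton of the image complex, where preimages are already known to be unbounded, or it remains inside the $n$--cell $\mathcal{P}$, where a Lipschitz estimate on $M$ finishes.  Your bookkeeping concern about applying those two results to $M$ is easily dispatched exactly as you say (append any generic final affine map).  In short, your route is longer but makes explicit the pullback step that the paper's proof leaves unexamined.
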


\begin{proof} Let $F_i$ be the first non-generic layer map in $N$. Since $F_i$ is non-generic, the affine map $A_i$ underlying $F_i$ is non-invertible, by Lemma \ref{lem:GenericArrangementInvertibleMap}. Indeed, the preimage of any point is an affine-linear subspace of $\mathbb{R}^n$ of dimension $\geq 1$, hence unbounded. Recalling that $F_i = \sigma \circ A_i$ is a  map that is affine-linear on the cells of the canonical polyhedral decomposition  $\mathcal{C}(F_{i-1} \circ \ldots \circ F_i)$
of $\mathbb{R}^n$, it follows immediately that the affine-linear map on each cell is also non-invertible, hence the preimage of {\em any} point $p \in \mbox{Im}(F_i)$ is unbounded. Any point $q \in \mbox{Im}(N)$ is of the form $q = (F_m \circ \ldots \circ F_{i+1})(p)$ for some $p \in \mbox{Im}(F_i)$. So $N^{-1}(\{q\})$ is unbounded.
\end{proof}

Recall that a threshold $t$ is transversal for a neural network $F$  (Definition \ref{def:NeuralNetworkTransversalThreshold}) if it is transversal for $F$ with respect to its canonical polyhedral complex $\mathcal{C}(F)$.

\begin{lemma} \label{lem:TransThreshDecisionBoundary}
Let $t$ be a transversal threshold for a neural network $F:\mathbb{R}^{n_0} \to \mathbb{R}$.  Then 
$$B_F(t) = \partial N_F(t) = \partial Y_F(t).$$
\end{lemma}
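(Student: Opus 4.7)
The plan is to prove the two equalities $B_F(t) = \partial N_F(t)$ and $B_F(t) = \partial Y_F(t)$ separately by double inclusion. The inclusions $\partial N_F(t) \subseteq B_F(t)$ and $\partial Y_F(t) \subseteq B_F(t)$ are immediate from the continuity of $F$: since $N_F(t)$ and $Y_F(t)$ are open (as preimages of open sets under a continuous map), any boundary point $p$ of $N_F(t)$ must satisfy $F(p) \geq t$ (because $p \notin N_F(t)$) and $F(p) \leq t$ (by taking limits of points in $N_F(t)$), forcing $F(p) = t$. The symmetric argument handles $\partial Y_F(t) \subseteq B_F(t)$.

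The substantive direction is $B_F(t) \subseteq \partial N_F(t) \cap \partial Y_F(t)$. Fix $p \in B_F(t)$. Since $\mathcal{C}(F)$ is a polyhedral decomposition of $\mathbb{R}^{n_0}$, there exists an $n_0$--cell $C \in \mathcal{C}(F)$ with $p \in C$. By Theorem~\ref{thm:BHAispolycpx}, $F$ is affine linear on $C$; by the transversality hypothesis and Lemma~\ref{lem:nonconscellneighborhood}, $F$ is non-constant on $C$. Since $p \in F^{-1}(\{t\}) \cap C$, Lemma~\ref{lem:genthreshconvpoly} yields that $\mathrm{aff}(F^{-1}(\{t\}) \cap C)$ is a hyperplane of $\mathrm{aff}(C) = \mathbb{R}^{n_0}$ that \emph{cuts} $C$. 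In particular, there exist points $x_- \in C$ with $F(x_-) < t$ and $x_+ \in C$ with $F(x_+) > t$.

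Now I exploit convexity of $C$ (polyhedral sets are convex) together with the affine linearity of $F$ on $C$. For $s \in [0,1]$, the point $p + s(x_- - p)$ lies in $C$, and
\[
F\bigl(p + s(x_- - p)\bigr) = F(p) + s\bigl(F(x_-) - F(p)\bigr) = t + s\bigl(F(x_-) - t\bigr),
\]
which is strictly less than $t$ for every $s \in (0,1]$. Letting $s \to 0^+$ produces a sequence in $N_F(t)$ converging to $p$, so $p \in \overline{N_F(t)}$; since $F(p) = t$ prevents $p \in N_F(t)$, we conclude $p \in \partial N_F(t)$. The identical argument with $x_+$ in place of $x_-$ shows $p \in \partial Y_F(t)$, completing both inclusions.

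The only step requiring care is ensuring the existence of the $n_0$--cell $C$ containing $p$ on which $F$ is non-constant; this is precisely where the transversality of $t$ enters, via the characterization in Lemma~\ref{lem:nonconscellneighborhood} together with the cutting conclusion of Lemma~\ref{lem:genthreshconvpoly}. Without transversality, $p$ could lie in a cell on which $F$ is constantly equal to $t$, and the approximation argument would fail.
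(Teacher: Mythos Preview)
Your proof is correct and follows essentially the same approach as the paper: both rely on Lemma~\ref{lem:genthreshconvpoly} to conclude that the level hyperplane cuts any cell meeting $B_F(t)$, then use the resulting points on either side to show $B_F(t)\subseteq\partial N_F(t)\cap\partial Y_F(t)$, with the reverse inclusions being immediate from continuity. The only cosmetic difference is that the paper works with an arbitrary cell containing $p$ while you single out a top-dimensional one, and you spell out the convexity/affine-linearity argument more explicitly via the segment $p+s(x_--p)$.
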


\begin{proof}
In the case that $B_F(t) = \emptyset$, either $Y_F(t) = \mathbb{R}^{n_0}$ and $N_F(t) = \emptyset$ or $Y_F(t) = \emptyset$ and $N_F(t) = \mathbb{R}^{n_0}$, so the statement holds. 

Now suppose $B_F(t) \neq \emptyset$.  For each cell $C \in \mathcal{C}(F)$ such that $F^{-1}(\{t\}) \cap C \neq \emptyset$,  Lemma \ref{lem:genthreshconvpoly} guarantees that  $\textrm{aff}(F^{-1}(\{t\}) \cap C)$ is a hyperplane that cuts $C$.  Denote by $C^+$ and $C^-$ the intersections of $C$ with the two open half-spaces that are the complement of this hyperplane. 
 Since $F$ is nonconstant on $C$ (by Lemma  \ref{lem:nonconscellneighborhood}), precisely one of $C^+,C^-$ must be contained in $Y_F(t)$ and the other must be contained in $N_F(t)$.  Therefore $B_F(t) \subseteq \partial Y_F(t)$ and $B_F(t) \subseteq \partial N_F(t)$.  The reverse inclusions are obvious.  
\end{proof}

We are now ready for:

\begin{proof}[Proof of Theorem \ref{thm:JohnsonResult}] 
Fix an integer $n \geq 2$ and let $F: \mathbb{R}^n \rightarrow \mathbb{R}$ be a ReLU neural network whose hidden layers all have dimension $\leq n$.  We may then assume WLOG that every intermediate layer has dimension $n$ (padding points in layers of dimension $<n$ with $0$s as needed).  
%(c.f. Remark \ref{rem:makeAllWidthsEqual}). 
Decompose $F$ as $F=G \circ N$, where $N: \mathbb{R}^n \rightarrow \mathbb{R}^n$ is the composition of all the layer maps except for the final one, and $G: \mathbb{R}^n \rightarrow \mathbb{R}$ is the final layer map.   If $G$ is degenerate, it is immediate that the theorem holds.  So assume that $G$ is nondegenerate.  

Step 1: We will prove that for every $t \in \mathbb{R}$, the decision boundary $B_F(t)$ is either empty or unbounded.  Since $G$ is nondegenerate, $X_t := G^{-1}(\{t\})$ is an affine hyperplane in the final hidden layer of $F$. Note that
 \[B_F(t) = N^{-1}\left(G^{-1}\{t\}\right) = N^{-1}\left(\mbox{Im}(N) \cap X_t\right).\]
If $\mbox{Im}(N) \cap X_t$ is empty, then $B_F(t)$ is empty, as desired. So assume $\mbox{Im}(N) \cap X_t$ is nonempty.

Case 1: We first consider the case that  $N$ is non-generic. In this case, $\mbox{Im}(N) \cap X_t \neq \emptyset$ implies $N^{-1}(\mbox{Im}(N) \cap X_t)$ is unbounded by Lemma  \ref{lem:NonGeneric}.

Case 2: Now consider the case that $N$ is generic.  By Proposition \ref{prop:ImN}, $\mbox{Im}(N)$ is the domain of a polyhedral complex $\mathcal{C}$ that has a unique (possibly empty) $n$-cell $\mathcal{P}$.

Subcase a:  If $X_t \cap \mathcal{P} = \emptyset$, then the assumption that $\mbox{Im}(N) \cap X_t \neq \emptyset$ implies  $X_t \cap \mathcal{C}_{n-1} \neq \emptyset$. Therefore $B_F(t)$ is unbounded, by Lemma  \ref{lem:BdryPreImUnbound}. 

Subcase b: If $X_t \cap \mathcal{P} \neq \emptyset$, then $\mathcal{P}$ is nonempty.  Because $n \geq 2$,
 Proposition \ref{prop:ImN} guarantees that $\mathcal{P}$ has rank $\geq 2$. Hence $X_t \cap \mathcal{C}_{n-1} \neq \emptyset$ by Lemma \ref{lem:HypIntBdry}.   Therefore $B_F(t)$ is unbounded by Lemma  \ref{lem:BdryPreImUnbound}. 
 
\medskip
Step 2: We will use the fact that $B_F(t)$ is empty or unbounded to show $N_F(t)$ and $Y_F(t)$ are also. 
 
Case 1: When $t \in \mathbb{R}$ is a transversal  threshold, it is now straightforward to see that the decision regions $Y_F(t)$ and $N_F(t)$ are also either empty or unbounded, since $B_F(t) = \partial N_F(t) = \partial Y_F(t)$ by Lemma 
\ref{lem:TransThreshDecisionBoundary}, and a bounded set cannot have unbounded closure.

Case 2: Suppose $t \in \mathbb{R}$ is a non-transversal  threshold.  We will give an argument for $Y_F(t)$; the argument for $N_F(t)$ is analogous. 
Let $X_t^+$ be the positive half-space associated to the co-oriented affine hyperplane $X_t$. Then 
$$F^{-1}((t, \infty)) = N^{-1}(\mbox{Im}(N) \cap X_t^+).$$ If this intersection is empty, then $Y_F(t)$ is empty, as desired.  If this intersection is nonempty, there are two subcases.

Subcase a: $t < \max\{t' \in \mathbb{R}: F^{-1}((t', \infty)) \neq \emptyset\}$.  In this case, since $F$ is continuous, there exists $\epsilon > 0$ such that 
$$(t,t+\epsilon) \subset \{t' \in \mathbb{R}: F^{-1}((t', \infty)) \neq \emptyset\}.$$
 Hence Lemma \ref{lem:transversalthreshold} implies the existence of a transversal threshold $t' > t$ for which $F^{-1}(\{t'\})$ is non-empty. Noting that $F^{-1}(\{t'\})= B_F(t')$, it follows from the first part of the proof that $B_F(t')$ is unbounded. So $Y_F(t) \supseteq B_F(t')$ must also be unbounded.

Subcase b: $t =  \max\{t' \in \mathbb{R}: F^{-1}((t', \infty)) \neq \emptyset\}$. In this case $Y_F(t)$ is empty, as desired.  
\end{proof}

\subsection{Obstructing multiple bounded connected components}

As observed in \cite{Johnson}, it is straightforward to construct, for every $n$, a width $n+1$ neural network with a single hidden layer, $\mathbb{R}^n \rightarrow \mathbb{R}^{n+1} \rightarrow \mathbb{R}$, that has a bounded decision region consisting of 
 a single connected component. We prove that such a simple architecture cannot produce a decision region with more than one bounded connected component.

\begin{theoremoneboundedcomponent} Let $F: \mathbb{R}^n \rightarrow \mathbb{R}^{n+1} \rightarrow \mathbb{R}$ be a ReLU neural network.
%\footnote{Note that $F$ needn't be generic or transversal.} 
Then a decision region $Y_F(t)$ or $N_F(t)$ associated to a transversal threshold $t$ can have no more than one bounded connected component.
\end{theoremoneboundedcomponent}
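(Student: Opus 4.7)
The plan is to exploit the very restrictive combinatorial structure of the canonical polyhedral complex $\mathcal{C}(F)$ when the unique hidden layer has width $n+1$: by Corollary \ref{cor:vertexadjnplus1}, every pair of distinct vertices of $\mathcal{C}(F)$ is joined by an edge in the $1$-skeleton $\mathcal{C}(F)_1$, on which $F$ is affine linear (Theorem \ref{thm:BHAispolycpx}). This will force the ``local maxima'' of $F$ to collapse onto a single connected locus. I will argue the statement for $Y_F(t)$; the claim for $N_F(t)$ follows by applying the result to $-F$, which is again a width $n+1$ ReLU network after negating the final affine map.

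First I would dispose of trivial degenerate cases. If some row of the weight matrix $W_1$ vanishes, the effective width is at most $n$ and Theorem \ref{thm:JohnsonResult} guarantees no bounded components of $Y_F(t)$. If the $n+1$ hyperplane normals span only a proper subspace of $\mathbb{R}^n$, then $F$ is invariant under translations along an orthogonal direction, so $Y_F(t)$ admits no bounded components. Hence I may assume $\mathcal{C}(F)$ is induced by exactly $n+1$ hyperplanes of rank $n$, guaranteeing that $\mathcal{C}(F)_0 \neq \emptyset$ and that Corollary \ref{cor:vertexadjnplus1} applies.

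Next I would show that every bounded connected component $C \subseteq Y_F(t)$ contains a vertex of $\mathcal{C}(F)$ that is a local maximum of $F$ on $\mathbb{R}^n$. Since $Y_F(t)$ is open, $C$ is open with $\partial C \subseteq B_F(t)$, so $F$ attains its maximum $M$ on $\bar{C}$ at some point $p \in C$ with $M > t$. Let $\tau$ be the cell of $\mathcal{C}(F)$ whose relative interior contains $p$. Because $F|_\tau$ is affine linear and $p$ is a local max, $F$ must be constant on $\tau$ with value $M$; hence $\tau \subseteq \{F > t\}$, and by connectedness $\tau \subseteq C$. Boundedness of $C$ then forces $\tau$ to be a bounded convex polytope, so $\tau$ has a vertex $v$, which is a $0$-cell of $\mathcal{C}(F)$ contained in the open set $C$ with $F(v) = M$; openness of $C$ and maximality of $M$ on $\bar{C}$ make $v$ a local maximum of $F$ on $\mathbb{R}^n$.

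Finally I would derive a contradiction from two such vertices. Suppose $C_1 \neq C_2$ are two bounded components producing distinct local-maximum vertices $v_1 \in C_1$ and $v_2 \in C_2$. By Corollary \ref{cor:vertexadjnplus1} there is an edge $e \in \mathcal{C}(F)_1$ with endpoints $v_1$ and $v_2$, and $F$ is affine linear on $\bar{e}$. If $F(v_1) \neq F(v_2)$, then $F$ is strictly monotone on $e$, contradicting local-maximality at the smaller endpoint. So $F(v_1) = F(v_2)$ and $F$ is constant on $\bar{e}$; but then $\bar{e} \subseteq \{F > t\}$ is a connected set meeting both $C_1$ and $C_2$, contradicting $C_1 \neq C_2$. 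The main obstacle I expect lies in Step 2: ensuring the local maximum occurs at an actual vertex rather than in the relative interior of some higher-dimensional flat cell (this is a genuine possibility when $F$ is not generic). The resolution is the observation highlighted above — any such flat cell must lie inside the bounded component $C$, hence be bounded, hence possess vertices to which the argument descends.
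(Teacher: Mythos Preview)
Your proof is correct and follows essentially the same strategy as the paper: locate a vertex of $\mathcal{C}(F)$ in each bounded component at which $F$ attains its maximum, connect two such vertices by a single edge via Corollary~\ref{cor:vertexadjnplus1}, and derive a contradiction from the affine behavior of $F$ on that edge. Your execution is slightly more direct than the paper's---you bypass the $\nabla F$-oriented subgraph machinery of Proposition~\ref{prop:subgraph} in favor of a bare-hands local-maximum argument, and you handle the low-rank degenerate case explicitly via translation invariance rather than leaving it implicit.
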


The following Lemma is a standard result in linear programming.  

\begin{lemma} \label{lem:Maxminkface} 
Let $\mathcal{P} \subseteq \mathbb{R}^n$ be a polyhedral set, and let $F: \mathcal{P} \rightarrow \mathbb{R}$ be an affine-linear map on $\mathcal{P}$. If $F$ achieves a maximum (resp., minimum) on the interior of $\mathcal{P}$, then $F$ achieves this maximum (resp., minimum) value on all of $\mathcal{P}$, and hence on all faces in its boundary. 
\end{lemma}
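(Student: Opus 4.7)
The plan is to reduce the claim to the elementary fact that a non-constant affine linear function on an affine space admits no local extremum. First, I would interpret ``interior'' as the relative interior of $\mathcal{P}$ inside its affine hull $\textrm{aff}(\mathcal{P})$, consistent with the convention used elsewhere in the paper for lower-dimensional cells. Working inside $\textrm{aff}(\mathcal{P})$ makes the argument purely intrinsic and avoids the distraction of whether $\mathcal{P}$ is full-dimensional in the ambient $\mathbb{R}^n$.

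Next, I would handle the maximum case (the minimum case being identical up to sign). Suppose $F$ attains its maximum on $\mathcal{P}$ at some $p \in \textrm{int}(\mathcal{P})$. Since $F$ is affine linear, I can write $F(x) = F(p) + L(x - p)$ for a unique linear functional $L$ defined on the translate of $\textrm{aff}(\mathcal{P})$ to the origin, call it $V$. Because $p$ is a relative interior point, there exists $\epsilon > 0$ such that $p + v \in \mathcal{P}$ for every $v \in V$ with $\|v\| < \epsilon$. The maximality condition $F(p + v) \leq F(p)$ then forces $L(v) \leq 0$ on this open ball in $V$; applying the same inequality to $-v$ yields $L(v) \geq 0$, so $L(v) = 0$ on a neighborhood of $0 \in V$. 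By linearity this means $L \equiv 0$ on all of $V$, so $F$ is constant on $\textrm{aff}(\mathcal{P})$ and hence on $\mathcal{P}$, with value $F(p)$. In particular $F$ takes the value $F(p)$ on every face in the boundary.

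There is essentially no obstacle here: the argument is just the standard ``linear functionals have no interior extrema'' observation, adapted to the polyhedral setting by working in the affine hull. The only subtlety worth flagging explicitly in the write-up is the relative-interior convention, since a polyhedral set $\mathcal{P}$ of dimension $k < n$ has empty topological interior in $\mathbb{R}^n$ but non-empty relative interior in $\textrm{aff}(\mathcal{P})$, and the hypothesis of the lemma is only meaningful under the latter reading.
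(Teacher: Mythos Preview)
Your proof is correct and follows essentially the same idea as the paper's: an affine linear function attaining an interior maximum must be constant on its affine hull. The paper simply cites this as a standard linear programming fact, whereas you supply the explicit local argument (the linear part vanishes on a ball, hence everywhere) and helpfully make the relative-interior convention explicit.
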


%\begin{proof} This is a standard result in linear programming. The maximum (resp., minimum) value of an affine linear function on a polyhedral set is achieved when the dot product with a particular vector (the vector of weights of the affine linear function) is maximized (resp., minimized). But since $\mathcal{P}$ is a closed subset of a linear subspace of $\mathbb{R}^n$, the maximum value of the dot product with a fixed vector is either attained on the boundary or on the entire set.
%\end{proof}

\begin{corollary} \label{cor:Maxminkface} 
Let $F:\mathbb{R}^{n_0} \to \mathbb{R}$ be a neural network, and $\mathcal{C}(F)$ be its canonical polyhedral complex. If $\mathcal{P}$ is a cell of $\mathcal{C}(F)$ with at least one vertex as a face, and $F$ achieves a maximum (resp., minimum) on $\mathcal{P}$, then $F$ achieves a maximum (resp., minimum) at a vertex of $\mathcal{P}$.
\end{corollary}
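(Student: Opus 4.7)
The plan is to prove the corollary by induction on $k := \dim(\mathcal{P})$, reducing from the extremum of $F$ on a $k$-cell to its extremum on a proper face of strictly lower dimension via Lemma \ref{lem:Maxminkface}, and then iterating.

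For the base case $k = 0$, $\mathcal{P}$ is itself the vertex and there is nothing to prove. For the inductive step, suppose $\dim(\mathcal{P}) = k \geq 1$ and the result holds for all cells of $\mathcal{C}(F)$ of dimension less than $k$ that possess at least one vertex as a face. By Theorem \ref{thm:BHAispolycpx}, the restriction $F|_{\mathcal{P}}$ is affine linear. Let $p \in \mathcal{P}$ be a point at which $F$ attains its maximum (the minimum case being analogous). If $p$ lies in the (relative) interior of $\mathcal{P}$, then Lemma \ref{lem:Maxminkface} forces $F$ to be constant on $\mathcal{P}$, so its maximum is also attained at the distinguished vertex of $\mathcal{P}$. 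Otherwise $p$ lies on the boundary of $\mathcal{P}$, hence in some proper face $\mathcal{F}$ of $\mathcal{P}$; since $\mathcal{C}(F)$ is a polyhedral complex, $\mathcal{F}$ is itself a cell of $\mathcal{C}(F)$ of dimension strictly less than $k$, and $F|_{\mathcal{F}}$ attains its maximum at $p$.

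To close the induction I need $\mathcal{F}$ to have a vertex as a face, so that the inductive hypothesis applies. This is the one (minor) point that takes a moment of care: since $\mathcal{P}$ has a vertex, $\mathcal{P}$ contains no affine line (a standard characterization in polyhedral geometry, equivalent to pointedness of the recession cone), any face $\mathcal{F} \subseteq \mathcal{P}$ inherits the property of containing no line, and hence, being a nonempty polyhedral set with pointed recession cone, $\mathcal{F}$ has a vertex. A vertex of $\mathcal{F}$ is automatically a vertex of $\mathcal{P}$, because a face of a face of a polyhedral set is a face. Applying the inductive hypothesis to $\mathcal{F}$ therefore yields a vertex $v$ of $\mathcal{F}$, and hence of $\mathcal{P}$, at which $F|_{\mathcal{P}}$ achieves its maximum, completing the induction.
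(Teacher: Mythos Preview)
Your proof is correct and follows essentially the same approach as the paper's: strong induction on $\dim(\mathcal{P})$, invoking Theorem~\ref{thm:BHAispolycpx} for affine linearity and Lemma~\ref{lem:Maxminkface} to push the extremum onto a proper face. The paper simply asserts that ``every face of $\mathcal{P}$ will also be a polyhedral set with a vertex as a face,'' whereas you supply the justification via the standard line-free (pointed recession cone) characterization; this extra care is welcome but does not constitute a different method.
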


\begin{proof} The function $F$ is affine-linear on $\mathcal{P}$ by Theorem \ref{thm:BHAispolycpx}. Under the given assumptions, every face of $\mathcal{P}$ will also be a polyhedral set with a vertex as a face. The result follows by strong induction on the dimension of $\mathcal{P}$, applying Lemma \ref{lem:Maxminkface} in the inductive step.
\end{proof}

\begin{corollary} \label{cor:Maxminpolytope} Let $F$ and $\mathcal{C}(F)$ be as above. If $\mathcal{P}$ is a bounded cell (polytope) of $\mathcal{C}(F)$ {\em of any dimension}, then $F$ achieves a maximum (resp., minimum) at a vertex of $\mathcal{P}$.
\end{corollary}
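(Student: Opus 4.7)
The plan is to reduce Corollary \ref{cor:Maxminpolytope} to Corollary \ref{cor:Maxminkface} by verifying its two hypotheses in the bounded case, namely that (i) $F$ attains its extrema on $\mathcal{P}$, and (ii) $\mathcal{P}$ has at least one vertex as a face.

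For (i), recall from Theorem \ref{thm:BHAispolycpx} that $F$ is affine linear on $\mathcal{P}$, and hence in particular continuous. Since $\mathcal{P}$ is a polyhedral set, it is closed (it is defined as a finite intersection of closed affine half-spaces), and by assumption it is bounded; thus $\mathcal{P}$ is compact, so $F|_\mathcal{P}$ attains both a maximum and a minimum.

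For (ii), I would invoke the classical fact from polyhedral geometry that every bounded, non-empty polyhedral set (i.e., a convex polytope) has at least one vertex. Concretely, this follows from the Minkowski-Weyl theorem (as recalled earlier in the paper, in the discussion preceding Lemma \ref{lem:klessthann}): a convex polytope admits a $V$-representation as the convex hull of its vertex set, which in particular is non-empty whenever the polytope is non-empty. Alternatively, one may simply pick any point where a generic linear functional attains its maximum on $\mathcal{P}$ and iterate the face-refinement argument of Lemma \ref{lem:Maxminkface} until arriving at a $0$-dimensional face. Either route shows that $\mathcal{P}$ has at least one vertex as a face.

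With (i) and (ii) in hand, Corollary \ref{cor:Maxminkface} applies directly and yields that $F$ attains its maximum (resp., minimum) at a vertex of $\mathcal{P}$. The only subtlety worth flagging is the degenerate case $\dim(\mathcal{P}) = 0$, where $\mathcal{P}$ is itself a vertex and the statement is trivially true; this handles the base case of the implicit induction already carried out in Corollary \ref{cor:Maxminkface}. I do not anticipate any genuine obstacle here—the corollary is a short compactness-plus-vertex-existence packaging of the previous result.
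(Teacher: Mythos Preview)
Your proposal is correct and follows essentially the same approach as the paper: invoke compactness (closed and bounded) together with the extreme value theorem to get existence of extrema, note that every polytope has at least one vertex, and then apply Corollary \ref{cor:Maxminkface}. The paper's proof is terser but uses exactly these three ingredients in the same order.
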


\begin{proof} Every polytope $\mathcal{P} \subset \mathbb{R}^n$ (of any dimension) has at least one vertex. Moreover, it is bounded, hence compact, since cells are closed. The extreme value theorem then guarantees that $F$ achieves both a minimum and maximum value on $\mathcal{P}$. The result follows from Corollary \ref{cor:Maxminkface}.
\end{proof} 

\begin{proposition} \label{prop:subgraph}  
Let $t$ be a transversal threshold for a neural network $F: \mathbb{R}^{n_0} \to \mathbb{R}$, and let $S$ be a bounded connected component of $Y_F(t)$ (resp., $N_F(t)$).
Then there exist non-empty bounded subgraphs $\mathcal{G}' \subseteq \mathcal{G} \subseteq \mathcal{C}(F)_1$ which, when  endowed with the $\nabla F$--orientation (Definition \ref{defn:oriented1skel}), satisfy:
\begin{enumerate}
	\item $\mathcal{G}'$ is flat;
	\item $\mathcal{G}' \subseteq \mathcal{G} \subsetneq S$;
	\item there is a non-empty collection, $\mathcal{E}$, of edges adjacent to $\mathcal{G}$, satisfying the property that every edge $e \in \mathcal{E}$ points towards $\mathcal{G}$ (resp., points away from $\mathcal{G}$) and has nonempty intersection with $\partial S$ and the other decision region, $N_F(t)$ (resp, $Y_F(t)$).
\end{enumerate}
\end{proposition}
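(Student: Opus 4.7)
My plan is to treat the $Y_F(t)$ case; the $N_F(t)$ case follows by applying the same argument to $-F$. The strategy is to locate a peak of $F$ on $\bar{S}$ at a vertex of $\mathcal{C}(F)$, take $\mathcal{G}'$ to be a flat subgraph at the peak value, take $\mathcal{G}$ to be the larger subgraph of all vertices and edges of $\mathcal{C}(F)_1$ contained in $\bar{S}$, and then exploit boundedness of $\bar{S}$ together with transversality of $t$ to produce the edges of $\mathcal{E}$.

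First I would set $M := \max_{\bar{S}} F$, which is attained since $\bar{S}$ is compact and satisfies $M > t$ since $S \neq \emptyset$. Because $t$ is transversal, the level set complex of $F$ for the interval complex $R = \{(-\infty, t], \{t\}, [t, \infty)\}$ subdivides $\mathcal{C}(F)$, and $\bar{S}$ is a union of top-dimensional cells of this subdivision, each a bounded polytope. By Corollary \ref{cor:Maxminpolytope}, $M$ is attained at a vertex of such a polytope; the new vertices introduced by the subdivision have $F = t < M$ and are excluded, so $M = F(v^*)$ for some vertex $v^*$ of $\mathcal{C}(F)$ in $\bar{S}$, with $v^* \in S$ by transversality. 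I would then take $V(\mathcal{G})$ to be the vertices of $\mathcal{C}(F)$ in $\bar{S}$ and $E(\mathcal{G})$ the $1$-cells of $\mathcal{C}(F)_1$ contained in $\bar{S}$, and let $\mathcal{G}'$ be the sub-subgraph on which $F \equiv M$. Transversality forces every vertex of $\mathcal{G}$ to lie in $S$ (with $F > t$), and affine linearity of $F$ on each edge of $\mathcal{G}$, combined with $F > t$ at the endpoints, forces $F > t$ throughout, so $\mathcal{G} \subseteq S$. Consequently $\mathcal{G}'$ is flat by construction and non-empty (as $v^* \in \mathcal{G}'$), $\mathcal{G}$ is bounded (contained in $\bar{S}$), and $\mathcal{G} \subsetneq S$ either on dimensional grounds when $n_0 \geq 2$ or by direct inspection when $n_0 = 1$, verifying conditions (i) and (ii).

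The main obstacle is condition (iii). I would let $\mathcal{E}$ consist of all $1$-cells of $\mathcal{C}(F)_1$ incident to a vertex in $V(\mathcal{G})$ but not contained in $\bar{S}$, and prove non-emptiness by contradiction. If every such edge were in $\bar{S}$, then $\bar{S}$ could not equal a union of entire top-dimensional cells of $\mathcal{C}(F)$, since otherwise $\partial S$ would be a union of facets of $\mathcal{C}(F)$ on which $F \equiv t$, forcing a vertex of $\mathcal{C}(F)$ to satisfy $F(v)=t$ and contradicting transversality. So some top-dimensional cell $C$ of $\mathcal{C}(F)$ satisfies $\textup{int}(C) \cap S \neq \emptyset$ and $C \not\subseteq \bar{S}$. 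When $C$ is bounded, Corollary \ref{cor:Maxminpolytope} produces a vertex of $C$ with $F > t$ (lying in $V(\mathcal{G})$ because the convex set $\{F > t\} \cap C$ meets $S$) and a vertex with $F < t$ (in $N_F(t)$); connectedness of the $1$-skeleton of the polytope $C$, together with the absence of $\mathcal{C}(F)$-vertices on $\{F = t\}$, produces an edge of $C$ from $V(\mathcal{G})$ to $N_F(t)$, which lies in $\mathcal{E}$ and contradicts the assumption. When $C$ is unbounded, I would analyze the unbounded $1$-faces at a vertex $v \in V(\mathcal{G})$ of $C$: along at least one such ray $F$ must descend to $-\infty$, for otherwise $\{F > t\} \cap C$ would extend along a recession direction of $C$ and force $S$ to be unbounded, so the descending ray lies in $\mathcal{E}$. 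For any $e \in \mathcal{E}$, the $\nabla F$-orientation points toward the $V(\mathcal{G})$-endpoint (since $F$ increases from the $N_F(t)$ side to the $V(\mathcal{G})$ side), and $e$ meets $\partial S$ at its unique $F = t$ point and meets $N_F(t)$ on the segment beyond. The unbounded-cell subcase, especially in the non-generic setting permitted by the footnote where $C$ may have few or no vertices in $V(\mathcal{G})$ and the outgoing ray may need to be produced by iterating from $v^*$ along the $1$-skeleton in $\bar{S}$, is the primary technical challenge I expect.
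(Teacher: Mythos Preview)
Your construction of $\mathcal{G}'$ and $\mathcal{G}$ and your verification of (i)--(ii) match the paper's route: locate $M = \max_{\overline{S}} F$, push it to a vertex of $\mathcal{C}(F)$ via Lemma~\ref{lem:Maxminkface}/Corollary~\ref{cor:Maxminpolytope}, take $\mathcal{G}'$ to be the portion of $\mathcal{C}(F)_1$ at level $M$, and take $\mathcal{G}$ to be the maximal subgraph of $\mathcal{C}(F)_1$ contained in $S$.

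For (iii) you diverge sharply from the paper, and the paper's route is far shorter. The paper argues in two lines: $\mathcal{C}(F)_1$ is connected and unbounded, $\overline{S}$ is bounded, and $\mathcal{G}' \subset \mathcal{C}(F)_1 \cap S$ is non-empty, so $\mathcal{C}(F)_1$ must meet $\partial S$. Any $1$-cell meeting $\partial S$ is non-flat by transversality of $t$, hence oriented, and since $F > t$ on the $S$-side and $F < t$ on the far side, the orientation points into $S$ (towards $\mathcal{G}$), and the same edge meets $N_F(t)$. That is the entire argument---no casework on whether an ambient top-cell $C$ is bounded, and no recession-cone analysis. The ``iterating from $v^*$ along the $1$-skeleton'' remedy you mention in your last sentence \emph{is} this connectedness argument; it should be the main line, not a fallback.

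Your direct attack via a single top-cell $C$ has a genuine gap in the unbounded subcase, as you suspect. You assume an unbounded $1$-face of $C$ emanates from your chosen vertex $v \in V(\mathcal{G})$, but this need not hold: in $\mathbb{R}^3$ the polyhedron $\{\,|x| \le 1,\ |y| \le 1,\ z \ge x,\ z \ge -x,\ z \ge y,\ z \ge -y\,\}$ is unbounded with recession cone the positive $z$-axis, yet the origin is a vertex whose four incident edges (to $(\pm 1, \pm 1, 1)$) are all bounded. Repairing this forces you to walk along the $1$-skeleton of $C$ from $v$ until you reach an unbounded edge or exit $\overline{S}$---at which point you are really using connectedness of the $1$-skeleton, which is exactly the paper's one-line argument applied globally to $\mathcal{C}(F)_1$.
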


%\begin{remark} Note that for any graphs $G_1$ and $G_2$, the statement ``$G_1$ is a subgraph of $G_2$'' implies that the set of vertices (resp. edges) of $G_1$ is a subset of the set of vertices (resp. edges) of $G_2$.  The vertices (resp. edges) of the graph $\mathcal{C}(F)_1$ are the $0$-cells (resp. $1$-cells) of $\mathcal{C}(F)$.  
%\end{remark}

 \begin{remark} One can view the graph $\mathcal{G}'$ described in Proposition \ref{prop:subgraph} as a piecewise linear analogue of a Morse critical point of index $n$ (resp., $0$). %Recall that if $f: M^n \rightarrow \mathbb{R}$ is a Morse function on a smooth $n$--dimensional manifold, any bounded connected component, $S$, of a superlevel set (resp., sublevel set) must contain a critical point of index $n$ (resp., of index $0$) in its interior. Moreover, once the manifold has been endowed with a Riemannian metric, the gradient vector field, $\nabla f$, will be transverse to $\partial S$ (since it's the preimage of a regular value) and will point into (resp., out of) $S$. 
 \end{remark}

\begin{proof} Let $\overline{S}$ denote the closure of $S$. Since $\overline{S}$ is closed and bounded, hence compact, the extreme value theorem tells us that $F$ attains its maximum (resp., minimum) value, $M \in \mathbb{R}$ (resp., $m \in \mathbb{R}$), on $\overline{S}$. I.e., there exists $x \in \overline{S}$ such that $F(x) = M$ and $F(y) \leq M$ for all $y \in \overline{S}$. But Lemma \ref{lem:Maxminkface} implies that $F^{-1}(\{M\})$ contains a non-empty subgraph, $\mathcal{G}'$, of  $\mathcal{C}(F)_1$, since a maximum value, if attained on the interior of a cell,  is attained on the whole cell, including its boundary.  

Moreover, $\mathcal{G}' \subset S$, for if $\mathcal{G}' \cap (\partial S \subseteq F^{-1}\{t\}) \neq \emptyset$ then $t = M$, which would imply that $t$ is {\em not} a transversal threshold since its preimage contains a vertex which by definition cannot have a nonconstant cellular neighborhood. 

Let $\mathcal{G}$ be the maximal subgraph of $\mathcal{C}(F)_1 \cap S$ that both contains $\mathcal{G}'$ and is entirely contained in $S$. Properties (i) and (ii) are immediate by construction. To see Property (iii), note that that $\mathcal{C}(F)_1$ is connected and unbounded, so it follows that $\mathcal{C}(F)_1 \cap \partial S \neq \emptyset.$  Since $t$ is a transversal threshold, all points in $\partial \overline{S}$ have nonconstant cellular neighborhood, hence all edges of $\mathcal{C}(F)_1$ intersecting $\partial S$ are oriented, and the orientations are toward (resp., away from) $\mathcal{G}$ if $S \subseteq Y_F(t)$ (resp., $S \subseteq N_F(t)$).
\end{proof}

\begin{proof}[Proof of Theorem \ref{thm:oneboundedcomponent}] 
%We open by noting that 
Since $F$ has a single hidden layer, $\mathcal{C}(F)=\mathcal{C}(\mathcal{A})$, for the hyperplane arrangement, $\mathcal{A} \subseteq \mathbb{R}^n$ associated to the first layer map. We may assume without loss of generality that $|\mathcal{A}| = n+1$, for if the first layer map is degenerate then the neural network has width $n$, and hence its decision regions have no bounded connected components, by Theorem \ref{thm:JohnsonResult}.

Now let $t \in \mathbb{R}$ be a transversal  threshold for a (not necessarily generic or transversal) ReLU network, $F: \mathbb{R}^n \rightarrow \mathbb{R}^{n+1} \rightarrow \mathbb{R}.$ We will show that $Y_F(t)$ has no more than one bounded connected component. The argument for $N_F(t)$ is analogous.

Assume, aiming for a contradiction, that $Y_F(t)$ has more than one bounded connected component. Choose two of these: $S_1$ and $S_2$. As described in Proposition \ref{prop:subgraph}, there exist non-empty bounded subgraphs $\mathcal{G}_i \subset S_i$ of the $1$--skeleton of $\mathcal{C}(A)= \mathcal{C}(F)$ and associated non-empty collections, $\mathcal{E}_i$, of edges adjacent to $\mathcal{G}_i$, equipped with $\nabla F$--orientation pointing towards $\mathcal{G}_i$. For each $S_i$, choose an {\em external} vertex $p_i \subset \mathcal{G}_i$. That is, choose a vertex $p_i \subset \mathcal{G}_i$ in the boundary of an edge of $\mathcal{E}_i$. 

By Corollary \ref{cor:vertexadjnplus1}, $p_1$ and $p_2$ are connected by 
%an edge 
a  $1$--cell  $e \in \mathcal{C}(A)$.   
It follows that $e$ is in both $\mathcal{E}_1$ and $\mathcal{E}_2$. But this is impossible, since it would require $e$ to be oriented in two different directions at once. We conclude that one of $\mathcal{E}_1$ or $\mathcal{E}_2$ must be empty, hence Proposition \ref{prop:subgraph} tells us that one of $S_1, S_2$ must be empty. The result follows.

\end{proof}

\bibliography{ReLUPaper}

\end{document}